\begin{document}

\setcounter{page}{39}

\title[Geodesics and Busemann functions in FPP]{Infinite geodesics, asymptotic directions, and\\ Busemann functions in first-passage percolation}


\author{Jack Hanson}
\address{Department of Mathematics, City College of NY, USA\afterpage{\blankpage}}
\email{jhanson@ccny.cuny.edu}


\subjclass[2010]{60K35, 60K37, 82B43}

\date{\today}

\begin{abstract}
We show existence, uniqueness, and directedness properties for infinite geodesics in the FPP model. After giving the fundamental definitions, we describe results by Newman and collaborators giving existence and uniqueness of directed geodesics under an unproven curvature assumption. We then give two proofs of the existence of at least two infinite geodesics under no unproven assumptions. In the final two sections, we give proofs of directedness statements for infinite geodesics using more recent methods which give information even under no unproven assumptions and prove a generalized uniqueness statement for infinite geodesics.
\end{abstract}

\maketitle


\tableofcontents

\section{Motivation}
We will concern ourselves with the usual FPP model. In this chapter, we consider the following broad question: how do long geodesics behave? Recall that (see \cite[Section 4]{ch:Damron}) it is expected that the geodesic from $0$ to $nx$ should deviate from the straight line parallel to $x$ by a distance of order $n^{\xi}$. One should reasonably expect that this exponent gives uniform control of the geodesic close to its endpoints, and that $n^{\xi}$ is an over-estimate of wandering far from the midpoint of the geodesic. For instance, if $m \ll n$, the first $m$ vertices on the geodesic should lie within distance $m^{\xi}$ of the line parallel to $x$. This would represent a form of tightness for the family of geodesics from $0$ to $nx$ as $n \to \infty$, leading naturally to the question of whether these geodesics 
\index{geodesic!convergence}%
converge. That is, whether for fixed $K$, the first $K$ steps of the geodesics to each $nx$ are identical for all large $n$. The resulting limiting path is referred to as an
\index{geodesic!infinite}%
infinite geodesic (some precise definitions are given below in Section \ref{sec:maindef}, and others are recalled from \cite{ch:Damron}).

More generally, one can ask about what classes of limits are possible for different sequences of finite geodesics. A path from $0$ to $n e_1$ which ``backtracks'' too far in the $-e_1$ direction seems unlikely to be a geodesic, since it should be likely to accumulate excessive passage time the further it backtracks. In particular, the family of geodesics from $0$ to $n e_1$ and the family from $0$ to $- ne_1$ should admit subsequential limits which are distinct in a strong sense: sharing only finitely many initial edges or vertices. Analogous conjectures for roughly parallel sequences of points seem trickier to make. Should the geodesic from $0$ to $n e_1 + \sqrt{n} e_2$ produce yet another distinct limit with positive probability? We shall see that, under strong assumptions on the model's limit shape, one expects a unique limiting geodesic corresponding to each direction in the plane, so the preceding question should have a negative answer in two dimensions. 

A further motivation for these questions comes from their relationship to other important questions in the model. As we will discuss, properties of infinite geodesics are closely related to those of the FPP limiting shape. Because of the relationship between geodesic wandering and fluctuations of the passage time (recall the scaling relation
\index{scaling relation}%
 $\chi = 2 \xi - 1$), one also could hope that techniques for controlling geodesic wandering will be useful for estimating fluctuations. Another issue already discussed (see \cite[Section 1.2]{ch:Damron}) is the low-temperature behavior of the so-called 
 \index{disordered ferromagnet}%
 ``disordered ferromagnet'' model from statistical physics. According to scaling arguments, the two-dimensional disordered ferromagnet should have a unique pair of ground states, which would imply that 2d FPP admits no doubly infinite geodesic. While doubly infinite geodesics will not be a focus of this chapter, the tools we develop have direct application to this question, since a doubly infinite geodesic should resemble a singly infinite geodesic near its midpoint.

\index{Busemann function}%
Busemann functions (and certain ``Busemann-like'' objects) will be a central tool in our study. These functions were originally introduced in the context of nonrandom metric geometry, and were brought to the study of FPP by Hoffman \cite{H05}. A Busemann function has the form $B(x,y) = \lim_{n \to \infty}[T(x, x_n) - T(y, x_n)]$, where $(x_n)$ is the sequence of vertices in some infinite geodesic, ordered as they appear on the geodesic. $B(x,y)$ can be interpreted as the relative distance to infinity of $x$ and $y$ along this geodesic. By bounding the growth of $B$, one can control where limiting geodesics are allowed to wander. Limits of the form $\lim_{n \to \infty} [T(x, n e_1) - T(y, n e_1)]$ (note the sequence $(n e_1)$ is typically not an infinite geodesic) were studied earlier by Newman \cite{N97}, since such a limit gives information about the ``microstructure'' of the surface of the growing region in the FPP growth model.

\section{Geodesics and infinite geodesics}
\subsection{Setting and assumptions}\label{sec:maindef}
We consider the setting of FPP on $(\Z^d, \cE^d)$ with $d \geq 2$ --- we recall some of the notation here for readability. For each $e \in \cE^d$, there is an associated 
\index{edge weights}%
edge weight $t_e \geq 0$. We assume for definiteness that the collection $(t_e)$ is independent and identically distributed with common distribution $F$. However, it is important to note that in many of the results below, the i.i.d. restriction can be considerably relaxed to more general translation-invariant cases. We generally write $\prob$ for the joint distribution of the edge weights.

In all cases that follow, we consider $F$ having at least the following properties.
\begin{assumption}[Standing assumptions]\label{asn:std} Unless otherwise specified, from now on we assume:
\  \begin{enumerate}
  \item $\bfE \min\{t_1, \, \ldots,\, t_{2d}\}^d < \infty$. 
    \item $F$ is continuous --- i.e., $\prob(t_e = a) = 0$ for each real $a$.
  \end{enumerate}
\end{assumption}
Note that (1) from Assumption~\ref{asn:std} is the moment assumption made in the statement of the shape theorem (see \cite[Section 3]{ch:Damron}). Item (2) is stronger than the assumption $\prob(t_e = 0) < p_c$ from that theorem, so these assumptions give us the existence of a 
\index{shape theorem}%
\index{limit shape}%
limit shape $\cB$ and a norm $g$ such that $\cB = \{x: \, g(x) \leq 1\}.$

For some results (the results of Newman below --- see Theorem \ref{thm:newman}), we will need the following strong assumptions in addition to the above.
\begin{assumption}[Strong assumptions]\label{asn:strong}
\ \begin{enumerate}
\setlength{\itemindent}{25pt}
\item[{\rm(}ExpM{\rm)}] There is some $\alpha > 0$ such that $\bfE \exp(\alpha t_e) < \infty$.
\item[{\rm(}Curve{\rm)}] The 
\index{limit shape}%
limit shape $\cB$ is uniformly curved, in the following sense. There exists a uniform constant $c = c_{\cB}$ such that, for any $z_1$ and $z_2$ having $g(z_1) = g(z_2) = 1$ and any $a \in [0,1]$,
\[g(z) \leq 1 - c\left[\min\{g(z - z_1),\, g(z - z_2)\} \right]^2\ , \]
where $z = a z_1 + (1-a)z_2$.
\end{enumerate}
\end{assumption}

Some comments on these assumptions are in order. Assumption (ExpM) is straightforward but strong, and it is (as we will see) unnecessary for many of the results on infinite geodesics. In fact, it is mainly applied to guarantee concentration results for passage times hold, and any result which relies too heavily on these will not adapt well to the non-i.i.d. settings mentioned above. 

Assumption~(Curve) is widely believed to hold for most ``reasonable'' edge weight distributions, and this claim is bolstered by comparison to some exactly solvable models closely related to FPP. But as it stands, (Curve) is far from proof. Indeed, as previously discussed, it is not even currently known that there exists some $F$ such that $\cB$ is strictly convex for i.i.d. edge weights, though this is also highly plausible.

\subsection{Types of geodesics}

We recall the definition of a geodesic between two terminal points $x$ and $y$. In what follows, we identify a path in $\Z^d$ with either the sequence of vertices or edges it traverses, depending on which is more convenient in context.
\begin{definition}
  A nearest-neighbor path $\gamma = (x = x_0, \, x_1,\, \ldots,\, x_n = y)$ is called a 
  \index{geodesic}%
  \index{geodesic!finite}%
  {\bf (finite) geodesic} between $x$~and~$y$ if $T(\gamma) = T(x,y)$.
\end{definition}

\begin{lemma}\label{lem:unique}
  Under Assumption \ref{asn:std}, there a.s. exists a unique finite geodesic between each $x$ and $y$ in $\Z^d$. Moreover, for any two finite paths $\gamma_1 \neq \gamma_2$, we a.s. have $T(\gamma_1) \neq T(\gamma_2)\ .$
\end{lemma}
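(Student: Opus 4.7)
The plan is to prove the second assertion first---that a.s.\ all distinct finite paths have distinct passage times---and then to extract uniqueness of geodesics as an immediate consequence. The driver throughout is continuity of $F$ from Assumption~\ref{asn:std}(2), together with the countability of $\Z^d$.

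For the second assertion, I would fix a pair of finite nearest-neighbor paths $\gamma_1 \neq \gamma_2$ with distinct edge multisets (paths differing only by a reordering of the same edges trivially share their passage time, so this is the genuine notion of ``distinct'' here, and it is also the notion relevant for the geodesic uniqueness claim). By distinctness, there is at least one edge $e^\star$ whose multiplicities in $\gamma_1$ and $\gamma_2$ differ, so
\[
T(\gamma_1) - T(\gamma_2) \;=\; c\, t_{e^\star} \,+\, R,
\]
where $c \in \Z\setminus\{0\}$ and $R$ is a measurable function of $(t_e)_{e\neq e^\star}$ alone. Conditioning on $(t_e)_{e \neq e^\star}$, independence and continuity of $F$ give
\[
\prob\bigl(T(\gamma_1) = T(\gamma_2) \;\big|\; (t_e)_{e\neq e^\star}\bigr) \;=\; \prob\bigl(t_{e^\star} = -R/c \;\big|\; (t_e)_{e\neq e^\star}\bigr) \;=\; 0,
\]
so $\prob(T(\gamma_1) = T(\gamma_2)) = 0$ by taking expectations. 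Since the collection of finite nearest-neighbor paths in $\Z^d$ is countable, a union bound over all such pairs gives the second assertion.

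For the first assertion, I would first note that continuity of $F$ and countability of $\cE^d$ yield $t_e > 0$ for every edge $e$ on a single full-measure event $\Omega_0$. On $\Omega_0$, any finite geodesic between $x$ and $y$ must be self-avoiding, since removing any cycle would strictly decrease its total weight and contradict minimality. Existence of at least one geodesic is standard from the preceding chapter (under Assumption~\ref{asn:std}(1) the shape theorem rules out arbitrarily long near-optimal paths, and one minimizes over the resulting finite set of candidates). If two distinct geodesics $\gamma_1$ and $\gamma_2$ between $x$ and $y$ coexisted on $\Omega_0$ intersected with the full-measure event from the second assertion, both would be self-avoiding with different edge sets and yet satisfy $T(\gamma_1) = T(\gamma_2) = T(x,y)$, contradicting what was just established.

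The main (and really only) obstacle is the bookkeeping around the meaning of ``$\gamma_1 \neq \gamma_2$'': one must either restrict a priori to self-avoiding paths or phrase the claim in terms of edge multisets, since paths that differ only by the order in which they traverse a common edge set share the same $T$. Once this point is fixed, the argument is a routine ``generic-position'' argument: continuity of a single edge weight together with countability of the underlying combinatorial structure simultaneously eliminates all coincidences.
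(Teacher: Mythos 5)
Your proposal is correct and follows essentially the same route as the paper's proof: condition on all weights except one edge in the symmetric difference, use continuity of $F$ to get conditional probability zero, take a union bound over the countably many pairs of paths, and obtain existence from the shape theorem. Your extra bookkeeping with edge multisets and self-avoidance of geodesics tidies up a point the paper's proof passes over silently (it simply picks $e \in \gamma_1 \setminus \gamma_2$ and asserts $h'(t_e)=1$), but the core argument is the same.
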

We call the event on that all finite paths have distinct passage times and that there exists a unique geodesic between every pair of points of $\Z^d$  by the name $\Omega_u$; in this language, Lemma \ref{lem:unique} tells us that $\prob(\Omega_u) = 1$. In many of the arguments that follow, we work conditionally on the event $\Omega_u$ without further comment.

\begin{proof}
  The existence claim follows easily from the shape theorem; we sketch the argument. Namely, we have under our assumptions that (a.s.) $T(x, z) \rightarrow \infty$ as $|z| \rightarrow \infty$. On the other hand, fixing a nonrandom shortest lattice path (in the $\Z^d$ metric, not the FPP metric) $\gamma$ from $x$ to $y$, we have that $T(x,y) \leq T(\gamma)$. In particular, for a.e. realization of the edge weights, there is some Euclidean ball such that $T(x,z) > T(x,y)$ for any $z$ outside this ball. This implies that the infimum over paths in the definition of $T(x,y)$ is a.s. effectively over a finite set and ensures the existence of the geodesic.

We show uniqueness. Suppose that $\gamma_1 \neq \gamma_2$ are two finite nearest-neighbor paths from $x$ to $y$ --- in particular, that some edge $e \in \gamma_1 \setminus \gamma_2$. For both $\gamma_1$ and $\gamma_2$ to be geodesics from $x$ to $y$, we must have $T(\gamma_1) = T(\gamma_2)$; we show that this has probability zero of occurring.

Condition on the values of the weights of all edges other than $e$, and view $h = T(\gamma_1) - T(\gamma_2)$ conditionally as a function of $t_e$.  We have $h'(t_e) = 1$ for all values of $t_e$, so there exists at most one value $a$ such that $h(a) = 0$. But $\prob(t_e = a) = 0 $, so there is conditional probability $0$ that $T(\gamma_1) = T(\gamma_2)$.
\end{proof}

\begin{definition}
  The unique geodesic from $x$ to $y$ whose existence is guaranteed by Lemma \ref{lem:unique} is denoted $G(x,y)$.
\end{definition}

Of course, a subpath of a geodesic is also a geodesic. In particular, if $x_i$ and $x_j$ are vertices of $G(x,y)$, then the subpath of $G(x,y)$ between them is identical to $G(x_i, x_j)$.

One can ask many interesting questions about the behavior of $G(x,y)$ --- for instance, how far does it typically deviate or ``wander'' from the straight line segment between $x$ and $y$? One such question of particular importance, which is closely related to the preceding wandering question, is:
\begin{itemize}
\item For what sequences $(y_n)_n \subseteq \Z^d$ with $|y_n| \rightarrow \infty$ does the sequence of geodesics $(G(x,y_n))_n$ 
\index{geodesic!convergence}%
converge?
\end{itemize}

Here we say a sequence of (nearest-neighbor) paths $(\gamma_n)_n$ converges to a path $\gamma$ if for each fixed $k$ and all large $n$, the first $k$ edges of $\gamma_n$ are identical with the first $k$ edges of $\gamma$. It is not hard to see (and we make precise in the proof of Proposition \ref{prop:onegeo} below) that the following holds for almost every edge weight configuration: if there are a vertex $x$, a sequence $(y_n)_n$, and some infinite path $\gamma$ such that $\lim_n G(x,y_n) = \gamma$, then $\gamma$ must be an infinite geodesic, in the sense of the next definition.

\begin{definition}
  An infinite path $\gamma = (x_0,\, x_1,\, \ldots)$ is an 
  \index{geodesic!infinite}%
  \index{unigeodesic}%
  {\bf infinite geodesic} or {\bf unigeodesic} if its finite subsegments are all finite geodesics --- that is, for every $i < j$,
\[(x_i, \, x_{i+1},\ , \ldots,\, x_j) = G(x_i, x_j)\ . \]
\end{definition}
We will sometimes omit the words ``finite'' or ``infinite,'' simply saying ``geodesic'' to mean a finite or infinite geodesic, when the meaning is clear by context.

Of course, simpler than convergence statements would just be existence statements, and this is mainly what we will focus on. Do there exist unigeodesics? How many? And what can we say about their wandering?

\begin{proposition}\label{prop:onegeo}
  With probability one, there exists an infinite geodesic $\gamma = (0 = x_0, \, x_1, \ldots)$.
\end{proposition}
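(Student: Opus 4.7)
The plan is a standard compactness (König's lemma) argument, working on the almost-sure event $\Omega_u$ of Lemma \ref{lem:unique} where every pair of points has a unique finite geodesic.

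First I would fix a deterministic sequence $(y_n)_n \subseteq \Z^d$ with $|y_n| \to \infty$ — the simplest choice is $y_n = n e_1$. For each $n$, Lemma \ref{lem:unique} gives a unique finite geodesic $G(0, y_n)$. Because $G(0,y_n)$ is a nearest-neighbor path from $0$ to $y_n$, its length in edges is at least $\|y_n\|_1$, which tends to infinity. In particular, for each fixed $k$, all but finitely many of the geodesics $G(0,y_n)$ have length strictly greater than $k$.

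Next I would build an infinite path $\gamma = (0 = x_0, x_1, x_2, \ldots)$ by iterated pigeonholing. The first edge of $G(0, y_n)$ lies in the finite set of $2d$ edges incident to $0$, so there is some $x_1 \sim 0$ and an infinite subsequence $(n_j^{(1)})_j$ along which the first step of $G(0, y_{n_j^{(1)}})$ is $(0, x_1)$. Among this subsequence, all but finitely many geodesics have length at least $2$, and the second step takes one of at most $2d$ values, so we can extract a further infinite subsequence $(n_j^{(2)})_j$ along which the first two edges of $G(0, y_{n_j^{(2)}})$ are $(0, x_1, x_2)$ for some $x_2$. Iterating this procedure and passing to the diagonal subsequence $m_k = n_k^{(k)}$ produces an infinite nearest-neighbor path $\gamma = (0, x_1, x_2, \ldots)$ such that, for each $k$, the path $(0, x_1, \ldots, x_k)$ is the initial segment of $G(0, y_{m_j})$ for all $j \geq k$.

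Finally I would verify that $\gamma$ is an infinite geodesic. Fix $i < j$. For any $\ell \geq j$ and any $n = m_\ell$, the prefix $(0, x_1, \ldots, x_j)$ agrees with the first $j$ edges of $G(0, y_n)$; in particular, $(0, x_1, \ldots, x_j)$ is itself a geodesic from $0$ to $x_j$ because subpaths of geodesics are geodesics. Again by the subpath property, $(x_i, x_{i+1}, \ldots, x_j)$ is a geodesic between $x_i$ and $x_j$, so on $\Omega_u$ it must coincide with the unique finite geodesic $G(x_i, x_j)$. This is exactly the condition for $\gamma$ to be an infinite geodesic, completing the proof.

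The argument has no substantial obstacle; the one point requiring a little care is showing that the diagonal extraction really produces an infinite path and does not stall at some finite length. That is handled by the observation above that $|G(0, y_n)|$ tends to infinity, so at every stage of the pigeonhole we retain infinitely many geodesics that are still long enough to be extended one more step.
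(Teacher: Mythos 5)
Your proposal is correct and is essentially the same diagonal (König's lemma) argument as the paper's proof: pigeonhole on the initial steps of the geodesics $G(0, n e_1)$, extract a (diagonal) subsequence converging to an infinite path, and conclude it is a unigeodesic because finite subsegments of geodesics are geodesics together with uniqueness on $\Omega_u$. Your extra remark that the lengths $|G(0,y_n)|$ tend to infinity is a fine point of care that the paper leaves implicit.
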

\begin{proof}
This is a standard 
\index{diagonal argument}%
``diagonal argument''.  Start with the sequence $(G(0, n e_1))_n. $ The second vertex of each $G(0, ne_1)$ is an element of $\{\pm e_i\}_{i = 1}^d$, a finite set; there thus exists an $e \in \{\pm e_i\}$ and a sequence $n_k \rightarrow \infty$ such that the first two vertices of $G(0, n_k)$ are identical for all $k$ --- i.e., $G(0, n_k e_1) = (0, e, \, \ldots)$ for all $k$. Repeating this argument on the subsequence $(G(0,n_k e_1))$ and inducting produces a subsequential limit $\gamma$. 

Since any finite subsegment of $\gamma$ is also a finite subsegment of $G(0, ne_1)$ for some $n$, and since finite subsegments of geodesics are also geodesics, we see that $\gamma$ is a unigeodesic.
\end{proof}

The proof of Proposition \ref{prop:onegeo} uses practically nothing from the model, but for this reason it is difficult to improve. One obvious consequence of translation-invariance is that we can replace $0$ by any $x$ and get an infinite geodesic from $x$ as well. But perhaps every unigeodesic is essentially the same. 

To clarify, let us call unigeodesics $\gamma_1$ and $\gamma_2$ 
\index{distinct}%
{\bf distinct} if $|\gamma_1 \triangle \gamma_2| = \infty$. It is easy to see that uniqueness implies that unigeodesics cannot touch, subsequently separate, then touch again, as this would imply that the separated segments are two different geodesics between vertices where they touch. This can be formalized as:
\begin{proposition}\label{prop:distinct}
  On the probability one event $\Omega_u$ of Lemma \ref{lem:unique},  geodesics $\gamma_1,\,\gamma_2$ are distinct if and only if they touch at most finitely often: $|\gamma_1 \cap \gamma_2| < \infty$. More specifically, if $\gamma_1 = (z_1, z_2, \ldots)$ and $\gamma_2 = (z_1', z_2', \ldots)$ are distinct, the intersection is a single subpath: for instance, $\gamma_1 \cap \gamma_2 = (z_K, \ldots, z_{K+\ell})$.
\end{proposition}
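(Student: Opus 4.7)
The plan is to work throughout on the probability-one event $\Omega_u$ of Lemma \ref{lem:unique} and to exploit the following ``convexity'' observation: whenever $v, w \in \gamma_1 \cap \gamma_2$, the $\gamma_1$-subpath joining $v$ to $w$ equals the $\gamma_2$-subpath joining $v$ to $w$ as an edge set, both being the unique finite geodesic $G(v,w)$. Granted this, all conclusions of the proposition follow by routine combinatorial bookkeeping, so the bulk of the work is in establishing the observation and then extracting the structure of $\gamma_1 \cap \gamma_2$ it forces.

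To establish the convexity claim I would write $\gamma_1 = (z_i)$ and $\gamma_2 = (z_j')$ and suppose $v = z_{i_1} = z_{j_1}'$ and $w = z_{i_2} = z_{j_2}'$ with $i_1 < i_2$. Since $\gamma_1$ is a unigeodesic, its finite subsegment $(z_{i_1}, \ldots, z_{i_2})$ equals $G(v,w)$. The $\gamma_2$-segment joining $v$ to $w$ is either $G(v,w)$ (if $j_1 < j_2$) or the reversal $G(w,v)$ (if $j_1 > j_2$); on $\Omega_u$ these coincide as sets of edges and vertices, so every vertex of the $\gamma_1$-segment from $v$ to $w$ lies in $\gamma_2$. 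Consequently, the index set $S := \{i : z_i \in \gamma_2\}$ is ``interval-like'' in $\Z_{\ge 0}$: if $i, j \in S$ and $i < k < j$, then $z_k$ lies on the $\gamma_1$-segment between $z_i$ and $z_j$, forcing $k \in S$. If $S \neq \emptyset$ we therefore have $S = \{K, K+1, \ldots, K+\ell\}$ for some starting index $K$ and some $\ell \in \{0,1,2,\ldots\} \cup \{\infty\}$, which is precisely the statement that $\gamma_1 \cap \gamma_2 = (z_K, z_{K+1}, \ldots, z_{K+\ell})$ is a single (possibly infinite) subpath.

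The biconditional then splits on whether $\ell$ is finite. If $\ell < \infty$ (including the case $S = \emptyset$), then $|\gamma_1 \cap \gamma_2| \le \ell + 1 < \infty$ while both paths are infinite, hence $|\gamma_1 \triangle \gamma_2| = \infty$ and $\gamma_1, \gamma_2$ are distinct; conversely, if $\ell = \infty$, the shared subpath is an infinite tail of each path, so $\gamma_1$ and $\gamma_2$ agree outside a finite initial segment and $|\gamma_1 \triangle \gamma_2| < \infty$. I do not foresee a serious obstacle: the only delicate point is that $\gamma_2$ may traverse the shared segment in the opposite order to $\gamma_1$, which is exactly why one must compare the $\gamma_1$- and $\gamma_2$-subsegments to $G(v,w)$ as \emph{edge sets} rather than as ordered sequences, and is also why one cannot upgrade ``single subpath'' to any stronger statement about matching orientations of $\gamma_1$ and $\gamma_2$ in general.
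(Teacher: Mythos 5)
Your argument is correct and is essentially the paper's intended proof: the paper leaves the proposition as an exercise, with the preceding remark that on $\Omega_u$ two unigeodesics cannot touch, separate, and touch again because the separated segments would be two different geodesics between the touch points --- which is precisely your ``convexity'' observation that both subsegments between common vertices $v,w$ must equal $G(v,w)$ (up to reversal). The interval bookkeeping you add, including the reversed-orientation caveat and the infinite-intersection case (where the same interval argument applied to $\gamma_2$ yields a common tail, hence $|\gamma_1\triangle\gamma_2|<\infty$), is the routine completion of that hint.
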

In particular, geodesics $\gamma_1$ and $\gamma_2$ (almost surely) cannot touch infinitely often without 
\index{geodesic!merging}%
\index{geodesic!coalescence}%
{\bf merging} or {\bf coalescing}: becoming asymptotically identical. We leave the formal proof of Proposition \ref{prop:distinct} as an exercise, since it gives some intuition about the ``treelike'' behavior of geodesics when they are known to be unique.

It is not clear at all a priori how to show that  there exist even two distinct unigeodesics. It is intuitively plausible, however, that subsequential limits of (for instance) $(G(0, n e_1))_{n > 0}$ and $(G(0, -ne_1))_{n > 0}$ should look very different. If some $\gamma$ arose as a subsequential limit of both sequences, it would have to in some sense represent a ``fast path'' in both the $e_1$ and $-e_1$ directions, which is difficult to imagine.

Let $\cN$ denote the random number of distinct unigeodesics (the supremal cardinality among collections of distinct unigeodesics). The above reasoning suggests that $\prob(\cN \geq 2) > 0$. It also suggests that limits of $G(0, ne_1)$ and $G(0, -ne_1)$ could be differently directed, in the following sense:
\begin{definition}
  Let $\gamma = (x_0,\, x_1,\, \ldots)$ be a unigeodesic. We say that $\theta$ is a 
  \index{limiting direction}%
  limiting direction
   of $\gamma$ if $\theta$ is a limit point of the sequence $(x_i / |x_i|)_i$. The set of all limiting directions for $\gamma$ shall be denoted $\Theta(\gamma)$.
\end{definition}

We will now focus on the following questions:
\begin{quest}\label{quest:geono}
  Can we make the above reasoning precise to show $\prob(\cN \geq 2) > 0$? What is the largest value $\cN$ can take --- for instance, is $\prob(\cN = \infty) > 0$?
\end{quest}
\begin{quest}\label{quest:geodir}
What $\Theta(\gamma)$ are realizable for unigeodesics $\gamma?$ Can we find $\gamma$ such that $\Theta(\gamma)$ is strictly smaller than $\bbS^{d-1}$? Or indeed, is there positive probability that there exists $\gamma$ with $\Theta(\gamma) = \bbS^{d-1}$? 
\end{quest}

\section{Geodesic trees and the curvature condition}
The first results on the above existence questions are due to Newman \cite{N97} (and related results with collaborators C. Licea \cite{LN96} and M. Piza \cite{NP95}). We say that a path $\gamma$ has 
\index{direction}%
{\bf direction} $\theta \in \bbS^{d-1}$ if $\Theta(\gamma) = \{\theta\}$. That is, if $\gamma = (x_0, \, x_1,\, \ldots)$, then $\gamma$ has direction $\theta$ if and only if $x_i / |x_i| \rightarrow \theta$.

\begin{theorem}[\cite{N97}]\label{thm:newman}
  Assume (ExpM) and (Curve) in addition to the usual assumptions. Then:
  \begin{enumerate}
  \item[{\rm(1)}] With probability one, for each $\theta \in \bbS^{d-1}$ there is a unigeodesic beginning at $0$ having direction $\theta$.
    \item[{\rm(2)}] With probability one, each unigeodesic has direction; i.e., for each geodesic $\gamma$, there exists a $\theta \in \bbS^{d-1}$ such that $\Theta(\gamma) = \{\theta\}$.
  \end{enumerate}
\end{theorem}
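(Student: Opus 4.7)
The plan is to use (ExpM) and (Curve) quantitatively and in tandem: (ExpM) yields polynomial concentration of $T(0,x)$ around the shape norm $g(x)$, while (Curve) makes angular detours away from direction $x/|x|$ cost quadratically in $g$. Playing these two estimates against each other gives a wandering bound for the finite geodesic $G(0,x)$, and a diagonal extraction then upgrades this to directed infinite geodesics.

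The starting input from (ExpM) is a Kesten-type concentration estimate $\prob(|T(0,x) - g(x)| \geq |x|^{\alpha_0}) \leq \exp(-|x|^{c_0})$ for some $\alpha_0 \in (1/2, 1)$ and $c_0 > 0$. I would then prove a \emph{cone lemma}: for targets $x$ with $x/|x|$ close to $\theta$ and $|x|$ large, with probability at least $1 - \exp(-|x|^{c_1})$ every vertex $y \in G(0, x)$ with $|y|$ in a suitable range has angular distance at most $|y|^{-\beta_1}$ from $\theta$. The derivation: if $y \in G(0,x)$ then $T(0, x) = T(0, y) + T(y, x)$, so by concentration $g(y) + g(x-y) \leq g(x) + O(|x|^{\alpha_0})$. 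Meanwhile (Curve), applied to the unit-shape vectors $z_1 = y/g(y)$ and $z_2 = (x-y)/g(x-y)$ with the convex combination proportional to $x$, yields $g(y) + g(x-y) \geq g(x)(1 + c\,\delta^2)$, where $\delta$ is comparable to the angular deviation between $y$ and $x-y$. Combining gives $\delta \lesssim |x|^{(\alpha_0 - 1)/2}$, and a union bound over dyadic scales of $|y|$ converts this into pointwise control on every $y \in G(0, x)$.

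For part~(1), fix $\theta$ and a sequence $x_n \to \infty$ with $x_n/|x_n| \to \theta$. The diagonal extraction of Proposition~\ref{prop:onegeo} yields a subsequential limit $\gamma$ of $(G(0, x_n))$, and the cone lemma together with Borel--Cantelli force $\gamma$ to have direction $\theta$; covering all $\theta \in \bbS^{d-1}$ simultaneously is then done by handling a countable dense subset and passing to limits using the cone lemma. For part~(2), the stretched-exponential failure rate $\exp(-|x|^{c_0})$ is summable over $x \in \Z^d$, so Borel--Cantelli produces a full-probability event on which the cone lemma holds for every sufficiently large $x$. On this event, for any unigeodesic $\gamma = (x_0, x_1, \ldots)$ the identity $(x_0, \ldots, x_n) = G(x_0, x_n)$, combined with the cone lemma applied at varying endpoints $x_n$, forces the sequence $x_n/|x_n|$ to have a unique limit point in $\bbS^{d-1}$; indeed, if two distinct limit directions $\theta_1 \neq \theta_2$ existed along subsequences, then fixing a large $x_k$ with $(x_k - x_0)/|x_k - x_0| \approx \theta_1$ and letting the target run along the $\theta_2$-subsequence would contradict the cone bound at $x_k$.

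The hardest step is the calibration inside the cone lemma. The curvature gain $c\delta^2 \cdot g(x)$ must dominate the concentration slack $O(|x|^{\alpha_0})$ across the full range of $\delta$ and $|y|$ one wants to control, and converting the abstract (Curve) inequality into a statement about the Euclidean angle between $y$ and $x-y$ requires some care because $\cB$ need not be rotationally symmetric (so one works with the $g$-unit sphere rather than the Euclidean one throughout). The second delicate point is upgrading from fixed $x$ to simultaneous control over all $x \in \Z^d$, where the stretched-exponential bound from Kesten concentration is exactly strong enough to make the union bound converge.
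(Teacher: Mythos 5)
First, a caveat about the comparison: the paper does not actually prove Theorem \ref{thm:newman} --- it defers to \cite{N97} (see also \cite{NP95}, \cite{ADH16}) and only records the strategy (show that $\cT(0)$ is ``straight'') together with the heuristic linking curvature to strict subadditivity. So your proposal is being measured against the cited proof, not against an argument in the text. Your quantitative engine is exactly the right one and matches Newman--Piza: Kesten/Alexander concentration from (ExpM) gives $|T(0,x)-g(x)|\le |x|^{\alpha_0}$ with stretched-exponential failure probability, and (Curve) applied to $z_1=y/g(y)$, $z_2=(x-y)/g(x-y)$ converts the resulting bound on $g(y)+g(x-y)-g(x)$ into a ``cigar'' of lateral width roughly $|x|^{(1+\alpha_0)/2}$ containing $G(0,x)$, simultaneously for all large $x$ by Borel--Cantelli. (Minor point: the quantity $\min\{g(z-z_1),g(z-z_2)\}$ controls the Euclidean distance of $y$ from the line through $0$ and $x$, not directly the angle between $y$ and $x-y$; this is why the angular bound at $y$ is only $|x|^{(1+\alpha_0)/2}/|y|$ and degenerates when $|y|$ is small relative to $|x|$ --- your ``suitable range'' restriction.)

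The genuine gap is in how you pass from this fixed-target cone lemma to conclusions (1) and (2). Your cone bound at an intermediate vertex $y$ is only valid when $|y|$ exceeds a power of the target's norm, and in both of your deductions the target's norm is not controlled in terms of $|y|$. In (2), once the target runs along the $\theta_2$-subsequence, the fixed vertex $x_k$ leaves the suitable range, so no contradiction is produced: a geodesic whose direction drifts while its norm grows fast enough between drifts is not excluded by the per-target lemma. In (1), a vertex $y$ of the limiting path is only known to lie on $G(0,x_n)$ for $n$ beyond an uncontrolled threshold $M(y)$, so again the bound you can invoke at $y$ is in terms of a possibly astronomically larger $|x_n|$. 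What Newman actually proves, and what both parts need, is the uniform ``$\delta$-straightness'' of the tree: almost surely, for all but finitely many $v$, \emph{every} $w$ with $v\in G(0,w)$ satisfies $\mathrm{angle}(v,w)\le C|v|^{-\delta}$, with the bound depending on $|v|$ alone and uniform over arbitrarily distant $w$. This follows from your cone lemma only after a multi-scale chaining step: given $v\in G(0,w)$ with $|w|$ huge, pick intermediate vertices $u_1,u_2,\dots$ of $G(0,w)$ beyond $v$ at geometrically growing scales (say $|u_{i+1}|\approx|u_i|^{1/c}$), control each angular increment $\mathrm{angle}(u_i,u_{i+1})$ by the per-target lemma with target $u_{i+1}$, and sum the resulting series, which is dominated by its first term $\approx|v|^{-\delta}$. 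With that uniform statement in hand, (2) follows because directions of vertices beyond $v$ all lie within $C|v|^{-\delta}$ of $\hat v$ (a Cauchy criterion), and (1) follows by sending the target to infinity along the $\theta$-subsequence for each fixed $v$ on the limit path. Adding this chaining step would close the argument; without it, the final deductions do not go through as written.
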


In addition to the above existence questions, one is often interested in the relationship of various geodesics from $0$ to one another: how large is the intersection between two infinite or long finite geodesics. In fact, we can simultaneously consider the collection of all geodesics $\{G(0,x)\}$ as a graph in its own right. This is the perspective taken by Newman \cite{N97}, which will be useful in the study of properties of infinite geodesics. 

For any $x \in \Z^d$, define the 
\index{geodesic!tree}%
{\bf geodesic tree} $\cT(x)$ as the directed graph given by the union $\bigcup_{y \in \Z^2} G(x,y)$, where each $G(x,y)$ is considered as a path directed away from $x$. In other words, $\cT(x)$ is the graph whose vertex set is $\Z^2$ and whose (directed) edge set is the set of all ordered pairs $(z_i, z_{i+1})$ which appear in $G(x,y) = (z_0, z_1, \ldots, z_n)$ for some $y$. 
 As the name anticipates, this graph is a tree:
 \begin{lemma}\label{lem:geotree}
  Almost surely, each finite directed path in $\cT(x)$ is a geodesic. Moreover, considering $\cT(x)$ as an undirected graph, it is a tree.
 \end{lemma}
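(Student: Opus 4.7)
The plan is to work on the probability-one event $\Omega_u$ throughout and to extract everything from the uniqueness of finite geodesics (Lemma~\ref{lem:unique}). The basic subroutine I will use is the following \emph{extension property}: whenever $(a,b)$ is a directed edge of $\cT(x)$, the geodesic $G(x,b)$ is obtained from $G(x,a)$ by appending the single edge $(a,b)$. Indeed, by the definition of $\cT(x)$ there is some $y$ with $G(x,y) = (x = w_0, \ldots, w_m = a, w_{m+1} = b, \ldots, w_n = y)$; every initial segment of a finite geodesic is itself a finite geodesic, so uniqueness on $\Omega_u$ identifies the initial segment ending at $w_i$ with $G(x, w_i)$, and applying this at $i=m$ and $i=m+1$ gives the claim.

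The first statement of the lemma then follows by induction. Given a directed path $(z_0, z_1, \ldots, z_k)$ in $\cT(x)$, iterating the extension property yields
\[ G(x, z_k) \;=\; G(x, z_0) \cdot (z_0, z_1) \cdot (z_1, z_2) \cdots (z_{k-1}, z_k), \]
where $\cdot$ denotes concatenation, so $(z_0, \ldots, z_k)$ is a contiguous subpath of $G(x, z_k)$ and hence is itself a finite geodesic. For the tree statement, connectivity is immediate since $G(x,y) \subset \cT(x)$ joins $x$ to every $y$. Acyclicity goes through the fact that in the directed graph $\cT(x)$ every $y \neq x$ has in-degree exactly one: if $(a, y)$ and $(a', y)$ were both directed edges, the extension property would give $G(x,y) = G(x,a) \cdot (a,y) = G(x,a') \cdot (a',y)$, and reading off penultimate vertices forces $a = a'$. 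Suppose for contradiction that the undirected graph contains a simple cycle $(z_0, z_1, \ldots, z_k = z_0)$ with $k \geq 3$, and let $z^* = z_j$ maximize the depth $d(v) := |G(x, v)|$ among the cycle vertices. The extension property forces $d(b) = d(a) + 1$ along every directed edge $(a, b)$, so neither cycle-edge at $z^*$ can be oriented outward from $z^*$; both must therefore point into $z^*$, producing two distinct in-neighbors $z_{j-1} \neq z_{j+1}$ and contradicting the in-degree-one conclusion above. Note that $z^* \neq x$ automatically, since $d(x) = 0$ is strictly smaller than the depth of any other vertex on a cycle of length at least three.

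There is no substantial obstacle here: the entire argument rests on uniqueness of finite geodesics, and the moment and curvature assumptions play no role. The only point requiring mild care is the interpretation of ``directed path'' as a sequence of vertices joined consecutively by edges of $\cT(x)$ oriented along the traversal, and of ``simple cycle'' in the undirected graph as a closed walk of length at least three using distinct edges, so that the extension-property induction and the depth-maximization argument can be carried out as stated.
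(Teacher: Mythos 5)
Your proof is correct and follows essentially the same route as the paper: your extension property, iterated, is exactly the paper's inductive claim that every directed path in $\cT(x)$ is the terminal segment of $G(x,z_k)$, and your in-degree-one observation is the paper's remark that any directed edge into a vertex $z$ must be the last edge of $G(x,z)$. The only cosmetic difference is how the contradiction on a hypothetical cycle is located: you take the depth-maximal vertex, which handles directed and non-directed cycles at once, whereas the paper rules out directed cycles separately and otherwise works at the first orientation-reversal vertex.
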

 \begin{proof}
   We consider an outcome in the existence and uniqueness event $\Omega_u$ from Lemma \ref{lem:unique}. We first prove that each directed path in $\cT(x)$ is a geodesic. We inductively prove the following stronger statement: each directed path $(y_0, y_1, \ldots, y_n)$ in $\cT(x)$ is the terminal segment of $G(x, y_n)$. For the base case $n=1$, we consider a single directed edge $(y_0, y_1)$ in $\cT(x)$. By definition, $(y_0, y_1)$ appears as a directed edge in some geodesic $G(x,y)$. Since subpaths of geodesics are geodesics, the segment of $G(x,y)$ up to the appearance of $y_1$ is the geodesic $G(x,y_1)$, proving the statement for $n  =1$.

Assume the statement holds for a positive integer $n$, and consider a directed path $(y_0, \ldots, y_{n+1})$ in $\cT(x)$. By the inductive hypothesis, $(y_1, \ldots, y_{n+1})$ is itself the terminal segment of $G(x, y_{n+1})$. The segment of $G(x, y_{n+1})$ up to $y_1$ must be $G(x, y_1)$ by uniqueness of geodesics. But since $(y_0, y_1)$ is a directed edge of $\cT(x)$, it must be the final directed edge of $G(x, y_1)$ by the induction hypothesis. Thus, $G(x, y_{n+1})$ is the concatenation of $G(x, y_0)$ with the directed path $(y_0, \ldots, y_{n+1})$, completing the inductive step.

We now prove the claim that (as an undirected graph) $\cT(x)$ is a tree. Clearly it is connected, since there is a directed path from $0$ to each $y$. Suppose for the sake of contradiction that there were some simple cycle $(z_1, z_2, \ldots, z_n = z_1)$ in $\cT(x)$. If this were also a directed cycle in either orientation, it would be $G(z_1, z_1)$ --- a contradiction to the uniqueness of geodesics. So we may assume that the orientation of the edges reverses as we traverse this cycle. 

Without loss of generality, we assume $(z_1, z_2)$ is a directed edge, and let $1 < K $ be the smallest integer such that $(z_K, z_{K+1})$ is not a directed edge. Then $(z_{K+1}, z_K)$ is a directed edge, and so is $(z_{K-1},z_K)$. In particular, both of these must be the last edge of $G(x, z_K)$. This clearly leads to a contradiction, since $(z_1, \ldots, z_n)$ is a simple cycle.
 \end{proof}

Since geodesics in distinct directions must be distinct, Theorem \ref{thm:newman} suggests that ``reasonable'' edge weight distributions have $\prob(\cN = \infty) = 1$. In other words, the tree $\cT(0)$ a.s. has infinitely many ends. The proof of the theorem amounts to showing that $\cT(0)$ is ``straight'', in the sense that finite paths in $\cT(0)$ stay close to to the Euclidean line segment between their endpoints. The result then follows using the limiting procedure described after Proposition \ref{prop:onegeo}, by taking limits of finite geodesics $G(0, v_n)$ with $v_n / |v_n| \rightarrow \theta$. These (or a subsequence thereof) converge to an infinite directed path in $\cT(0)$ , which is an infinite geodesic by Lemma \ref{lem:geotree}.

Implementing the strategy amounts to rigorously establishing a strong version of our heuristic that geodesics to points far off in direction $\theta$ should not be geodesics for points in direction $-\theta$ (``no backtracking''). However, one must show a version of this statement not just for $- \theta$, but also for directions even very close to $\theta$.

We will not attempt to prove the theorem here, but will give a very basic idea of the connections between regularity properties of $\cB$ and statements of a ``no backtracking'' flavor. Suppose we wish to show a much weaker statement; namely, that the geodesic from $0$ to $n e_1 + n e_2$ is not likely to be identical to the concatenation of the geodesic $G(0, n e_1)$ with $G(n e_1, n e_1 + n e_2)$. One way to do this would be to show that
\begin{equation}
\label{eq:basicsub}
T(0, n e_1 + n e_2) \ll T(0, n e_1) + T(n e_1, n e_1 + n e_2). 
\end{equation}

On can approximate the left- and right-hand sides of \eqref{eq:basicsub}, for $n$ large, by $n g(e_1 + e_2)$ and $n[g(e_1) + g(e_2)]$ (up to $o(n)$ terms). Then \eqref{eq:basicsub} will certainly hold if $g(e_1 + e_2) < g(e_1) + g(e_2)$ or equivalently if $g(e_1/2 + e_2/2) < (1/2)[g(e_1) + g(e_2)]$. This follows, for instance, if $\cB$ is strictly convex.

\subsection{Uniqueness: Licea-Newman}
\label{sec:lnuniq}
In addition to the above existence statements, one can ask for upper bounds on the number of infinite geodesics.  A natural question is whether many geodesics exist in a given (fixed, deterministic) direction $\theta$. At a heuristic level, this possibility seems unlikely on $\Z^2$. Since such geodesics exhibit power-law wandering (see Section 4.2 in \cite{ch:Damron}) 
in the direction perpendicular to $\theta$, we would expect them to typically cross each other (unless their wandering were somehow synchronized). On the other hand, multiple such crossings would lead to a ``loop'' of two disjoint geodesics between the same endpoints, a contradiction under our weight assumptions. With this two-dimensional picture in mind, we restrict for the remainder of Section \ref{sec:lnuniq} to $\Z^2$.

 The work of Licea and Newman \cite{LN96} shows a limited form of a ``uniqueness'' statement for the geodesics in a fixed direction. Namely, when $d = 2$, we have, fixing $\theta$ in some full-measure subset of $\bbS^1$, that there is a.s. at most one distinct geodesic $\gamma$ such that $\Theta(\gamma) = \{\theta\}$. This will be a partial motivation for our assumption (LimG) introduced below as Assumption \ref{asn:limg}, which allows the construction of a well-behaved ``Busemann function''.

\begin{theorem}[Licea-Newman \cite{LN96}]\label{thm:lnunique}
Consider FPP on $\Z^2$ under the standard assumptions of Assumption \ref{asn:std}. There exists a deterministic set $D \subseteq [0, 2\pi)$ of Lebesgue measure $2 \pi$ such that the following holds. For any $\theta \in D$,  
\[\prob\left(\text{there are distinct $\theta$-directed unigeodesics } \gamma_1,\, \gamma_2\right) = 0. \]
\end{theorem}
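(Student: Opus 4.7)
Let $A_\theta$ denote the event that two distinct $\theta$-directed unigeodesics from $0$ exist, and set $q(\theta) = \prob(A_\theta)$. Defining $D := \{\theta \in [0, 2\pi) : q(\theta) = 0\}$, the theorem is equivalent to the assertion that $D$ has full Lebesgue measure. After a brief measurability check, Tonelli gives
\[
\int_0^{2\pi} q(\theta)\,d\theta \;=\; \bfE[\mathrm{Leb}(\Theta^*)], \qquad \Theta^* := \{\theta \in [0, 2\pi) : A_\theta \text{ holds}\},
\]
where $\Theta^*$ is a random subset of the circle. It thus suffices to show that $\Theta^*$ is almost surely Lebesgue-null. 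I would in fact aim to prove the stronger claim that $\Theta^*$ is almost surely at most countable.

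\textbf{Planar selection.} For each $\theta \in \Theta^*$ I would use the cyclic order of edges at each vertex of $\Z^2$ to distinguish, among all $\theta$-directed unigeodesics from $0$, an extremal pair $\gamma_L^\theta, \gamma_R^\theta$ --- the leftmost and rightmost such geodesics in the planar embedding of the tree $\cT(0)$ of Lemma \ref{lem:geotree}. By Proposition \ref{prop:distinct}, they coalesce on a finite initial subpath before splitting at a branching vertex $v(\theta) \in \Z^2$ via distinguished outgoing edges $e_L(\theta), e_R(\theta)$. This yields a map
\[
\Phi : \Theta^* \longrightarrow \Z^2 \times \cE^2 \times \cE^2, \qquad \Phi(\theta) = (v(\theta), e_L(\theta), e_R(\theta)),
\]
whose target is countable.

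\textbf{Monotonicity and conclusion.} I would prove that $\Phi$ is essentially injective, which then forces $\Theta^*$ to be countable. The key ingredient is a planar no-crossing argument: two geodesics from $0$ in $\R^2$ cannot properly cross without sharing a vertex, and by Lemma \ref{lem:unique} (through Proposition \ref{prop:distinct}) they cannot reunite after separating. Consequently, for $\theta < \theta'$ in $\Theta^*$, the open wedge cut out between $\gamma_L^\theta$ and $\gamma_R^\theta$ after its branching vertex must lie weakly clockwise of the corresponding wedge for $\theta'$, since the two wedges have essentially disjoint asymptotic ``angular supports''. The separation data $\Phi(\theta)$ then varies monotonically in the cyclic edge-order along $\cT(0)$; because $\cT(0)$ has only countably many vertices, $\Theta^*$ injects into a countable set.

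\textbf{Main obstacle.} The delicate step is making the extremal selection and the planar monotonicity rigorous --- in particular, defining $\gamma_L^\theta, \gamma_R^\theta$ when $\cT(0)$ has uncountably many ends of direction $\theta$, and handling the possibility that some unigeodesic $\gamma$ has $\Theta(\gamma)$ equal to a nontrivial arc rather than a singleton (so that the condition ``$\Theta(\gamma) = \{\theta\}$'' behaves delicately as $\theta$ varies). One must also verify that monotonicity of $\Phi$ propagates from ``strict'' increases of $\theta$ to a well-defined monotone structure on the whole set $\Theta^*$. The entire argument is intrinsically two-dimensional: there is no analogous cyclic edge order in $d \geq 3$, which is why the theorem is stated for $\Z^2$. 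In $\Z^2$ one could alternatively approach the result through translation-invariance of Busemann-type limits, at the cost of importing more ergodic-theoretic machinery developed later in the chapter.
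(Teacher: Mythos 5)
There is a genuine gap: your argument proves only the weaker statement in which both geodesics emanate from the origin, which is the paper's Lemma \ref{lem:lnlem}, not Theorem \ref{thm:lnunique} itself. You define $A_\theta$ as the event that two distinct $\theta$-directed unigeodesics \emph{from $0$} exist, but the theorem's event carries no such restriction: the two geodesics may start at different vertices, and ``distinct'' means they never coalesce. For such a pair there need be no bifurcation vertex at all, so your map $\Phi(\theta)=(v(\theta),e_L(\theta),e_R(\theta))$ is simply undefined, and the countability of the exceptional direction set $\Theta^*$ is not established. Nor can you reduce to the origin case by translation invariance and a union over starting points: knowing that from each fixed vertex there is at most one $\theta$-directed geodesic says nothing about whether the (unique) $\theta$-directed geodesics from two different vertices coalesce, and ruling out disjoint ``parallel'' $\theta$-directed geodesics is precisely the hard content of the theorem. (For the part you do prove, your Fubini-plus-extremal-selection argument is essentially the paper's proof of Lemma \ref{lem:lnlem}, with the clockwise-most/counterclockwise-most search in $\cT(0)$ playing the role of your leftmost/rightmost geodesics; that portion is fine.)

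What is missing is the entire second half of the paper's argument, which deduces Theorem \ref{thm:lnunique} from Lemma \ref{lem:lnlem}: assuming with positive probability two disjoint $e_1$-directed geodesics exist, one uses translation invariance and the ergodic theorem to produce, with positive probability, several disjoint $e_1$-directed geodesics meeting a vertical line only at their starting points; an edge-weight modification argument (using unbounded support of $F$, with a separate treatment in the bounded case) ``shields'' the middle geodesic so that no geodesic started from the far side can merge with it; and finally a Burton--Keane-type lack-of-space argument, implemented in the paper via the mass-transport identity of Lemma \ref{lem:mass}, yields a contradiction because the shielded geodesic would be the $\prec$-minimal ancestor of infinitely many vertices while each vertex can transport at most unit mass. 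Your closing remark about Busemann-type limits gestures toward related machinery but does not supply this coalescence step, so as written the proposal does not prove the stated theorem.
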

Note that the $\theta$  appearing in the argument of the probability is deterministic. This is an important distinction; in fact the  analogue of Theorem \ref{thm:lnunique} with uniqueness simultaneously holding for all $\theta \in [0, 2\pi)$ should not hold. This is because two parallel geodesics should run along directions where there is a 
\index{competition interface}%
``competition interface'' (see Theorem 4.7.1 in \cite{ch:Rassoul-Agha}). 
On the other hand, it is believed that the set $D$ is a purely technical condition --- i.e., that the statement of the theorem should hold with $D = [0, 2\pi)$. See Section \ref{sec:lnredux}, where we discuss improvements to Theorem \ref{thm:lnunique}.

The proof of Theorem \ref{thm:lnunique} rests on the following weaker statement, which we give as a lemma.
\begin{lemma}\label{lem:lnlem}
	There exists a deterministic set $D \subseteq [0, 2\pi)$ of Lebesgue measure $2 \pi$ such that the following holds. For any $\theta \in D$,  
\[\prob\left(\text{there are distinct $\theta$-directed unigeodesics } \gamma_1,\, \gamma_2 \text{ starting from 0}\right) = 0. \]
\end{lemma}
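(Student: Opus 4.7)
The plan is to prove
\[ f(\theta) := \prob(\text{two distinct $\theta$-directed unigeodesics start at } 0) = 0 \text{ for a.e.\ } \theta \in [0, 2\pi), \]
and then take $D = \{\theta : f(\theta) = 0\}$. The argument is an ergodic/planar counting contradiction in the spirit of Licea--Newman.

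Fix $\theta$ and suppose for contradiction that $f(\theta) > 0$. Translation invariance of the i.i.d.\ environment gives $\prob(\text{two distinct $\theta$-directed unigeodesics from } x) = f(\theta)$ for every $x \in \Z^2$. By Proposition \ref{prop:distinct}, two distinct $\theta$-directed geodesics from a common vertex share a single finite initial subpath before branching. Hence the event ``some bifurcation vertex exists in $\Z^2$'' is translation-invariant and has positive probability, so by ergodicity of the $\Z^2$-shift it a.s.\ occurs, and the per-site probability
\[ g(\theta) := \prob(\text{two $\theta$-directed geodesics emanate from $0$ with distinct first edges}) \]
is positive.

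Next I would count bifurcation vertices in the box $\Lambda_n = [-n, n]^2 \cap \Z^2$. Linearity of expectation gives expected count $\sim 4 g(\theta) n^2$. Against this I would prove a deterministic $O(n)$ upper bound from planar geometry. Consider the subgraph of $\Z^2$ obtained by unioning the two $\theta$-directed rays emanating from each bifurcation vertex in $\Lambda_n$. By Proposition \ref{prop:distinct}, any two distinct rays can intersect only in a single connected subpath, which forces the union to be tree-like (any cycle would produce two distinct finite geodesics between a pair of vertices, violating Lemma \ref{lem:unique}). A standard tree count then gives that $k$ vertices of degree $\geq 3$ (which include the bifurcations) yield at least $k+1$ infinite ends, each corresponding to a distinct $\theta$-directed ray exiting $\Lambda_n$. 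In 2D these rays cannot cross one another (again by Proposition \ref{prop:distinct}), so they are stacked along the ``forward $\theta$'' portion of $\partial \Lambda_n$, a window of length $O(n)$; being lattice paths separated by distance $\geq 1$, there are $O(n)$ of them. Hence $k \leq Cn$, so $4 g(\theta) n^2 \leq Cn$, forcing $g(\theta) = 0$ and contradicting $g(\theta) > 0$.

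The main obstacle is the ``exit window'' step: a priori, the convergence $x_i/|x_i| \to \theta$ defining $\theta$-directedness is only pointwise in the starting vertex, so rays originating at different bifurcations in $\Lambda_n$ may require different scales to align with direction $\theta$, and some could exit through faces of $\partial \Lambda_n$ far from the ``forward $\theta$'' window. Making the window uniform requires either restricting to bifurcations well inside $\Lambda_n$ (say in $\Lambda_{n/2}$) and quantifying the alignment rate via an auxiliary ergodic/Borel--Cantelli argument, or integrating over $\theta$ and showing that the set of $\theta$ on which this uniformization fails has Lebesgue measure zero. The latter is the origin of the ``a.e.\ $\theta$'' qualifier in the statement of the lemma.
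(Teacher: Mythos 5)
Your counting scheme has a structural gap at the step ``$k$ bifurcations yield at least $k+1$ ends exiting through $O(n)$ boundary sites,'' and it is not the exit-window issue you flag. First, a bifurcation vertex $x$ (two $\theta$-directed rays leaving $x$ through distinct first edges) has degree $2$ in the union of all rays: the two rays concatenate into a single doubly-infinite path, so $x$ is not a branch point and forces no extra end. Second, the union of rays emanating from \emph{different} bifurcation vertices is not a forest, and your justification (``any cycle would produce two distinct finite geodesics between a pair of vertices'') is false: a cycle can be assembled from geodesic segments joining \emph{different} pairs of endpoints. Concretely, if $x$ and $x'$ are two bifurcation vertices whose top rays coalesce at $u$ and whose bottom rays coalesce at $v$, the union contains the cycle $x \to u \to x' \to v \to x$, built from the four geodesic segments $G(x,u)$, $G(x',u)$, $G(x',v)$, $G(x,v)$; no two of these share both endpoints, so Lemma \ref{lem:unique} and Proposition \ref{prop:distinct} are not violated. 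In this ``bubble'' scenario all rays can coalesce into just two ends while the number of bifurcation vertices in $\Lambda_n$ is of volume order, so no surface-order bound on ends constrains the bifurcation count and the contradiction never materializes. (Even granting a forest, distinct ends need not cross $\partial\Lambda_n$ at distinct vertices, since they may separate only outside the box.) Ruling out exactly such coalescing multi-geodesic structures is the content of the much harder Theorem \ref{thm:lnunique}, which uses an edge-modification argument plus a Burton--Keane/mass-transport step \emph{after} the lemma is already available; it cannot be obtained by the naive density count, and the ``a.e.\ $\theta$'' qualifier does not repair this, because the failure is structural rather than a matter of uniformizing directedness.

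The paper's proof of Lemma \ref{lem:lnlem} is entirely different and much softer: it fixes the environment, considers the \emph{random} set $\Phi$ of directions $\theta$ admitting two distinct $\theta$-directed unigeodesics from $0$, and shows $\Phi$ is a.s.\ countable. This is done by observing that any such pair bifurcates at some $z$ with first edges $(z,y_1),(z,y_2)$, and that the extreme (clockwise-most and counterclockwise-most) infinite paths in the geodesic tree $\cT(0)$ started from these edges are trapped between the original pair, hence are themselves $\theta$-directed; since these extremal paths are determined by the countably many triples $(z,y_1,y_2)$, the set $\Phi$ is covered by a countable family of directions. Then $\E\,\mathrm{Leb}(\Phi)=0$ and Fubini give $\prob(\theta\in\Phi)=0$ for Lebesgue-a.e.\ $\theta$, which is where the full-measure set $D$ comes from. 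If you want to salvage a counting approach you would need witnesses at each bifurcation that reach the boundary disjointly (as in Burton--Keane trifurcations), which the geometry here does not provide without substantial extra input.
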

\begin{proof}
	Consider the random set \[\Phi = \{ \theta \in [0, 2 \pi): \text{there are distinct } \theta-\text{directed unigeodesics starting from }0\}.\] The main goal of the argument is to show that $\Phi$ is a.s. countable by giving an explicit algorithm for indexing $\Phi$. 

Consider a direction $\theta \in \Phi$ and choose two distinct $\theta$-directed unigeodesics $\gamma_1, \, \gamma_2$ starting from $0$. Note that $\gamma_1$ and $\gamma_2$ must bifurcate at some vertex $z \in \Z^2$. Consider the concatenation of the portions of $\gamma_1$ and $\gamma_2$ from $z$ onward as a doubly-infinite self-avoiding path $\Gamma$. The path $\Gamma$ divides the plane into two components (that is, $\mathbb{R}^2 \setminus \Gamma$ has two connected components), one of which is asymptotically contained in small sectors around the direction $\theta$. We call this the region ``between'' $\gamma_1$ and $\gamma_2$. Moreover, one of the paths $\gamma_1,$ $\gamma_2$ is asymptotically counterclockwise of the other. Without loss of generality, we assume it is $\gamma_1$.

Suppose that the first edge of $\gamma_1$ after $z$ is $(z, y_1)$, and similarly let $(z, y_2)$ denote the corresponding edge of $\gamma_2$. We define algorithms producing two distinct $\theta$-directed unigeodesics $\alpha_1(z, y_1, \cT(0))$ and $\alpha_2(z, y_2, \cT(0))$. The first edge (considered as directed for purposes of the algorithm) of $\alpha_1$ is $f_1 = (z, y_1)$. To find the next edge, consider the set of all outgoing directed edges in $\cT(0)$ which contain and are directed away from $y_1$, and which are part of some infinite geodesic in $\cT(0)$. Out of all such edges, choose the clockwise-most; this is the second edge $f_2$ of $\alpha_1(z, y_1, \cT(0))$. From the ``far'' endpoint of $f_2$ (i.e., the endpoint other than $y_1$), we repeat the previous step.

The sequence of edges so produced is an infinite path in $\cT(0)$ and hence an infinite geodesic. The construction guarantees that $\alpha_1$ contains only edges of $\gamma_1$ and edges in the region between $\gamma_1$ and $\gamma_2$. Indeed, if $\alpha_1$ ever branches off of $\gamma_1$ and then re-intersects $\gamma_1$ or $\gamma_2$, there would be a loop in $\cT(0)$, which is forbidden by Lemma \ref{lem:geotree}.  Therefore, $\alpha_1$ is also $\theta$-directed. Similarly, $\alpha_2$ (defined via a similar, counter-clockwise search algorithm) is also $\theta$-directed. By a similar reasoning to the above, $\alpha_1$ and $\alpha_2$ cannot intersect, since this would again produce a loop. 

In particular, on the event $\Omega_u$, there exists some triple of vertices $(z, y_1, y_2)$ for which the corresponding $\alpha_1$ and $\alpha_2$ geodesics are distinct and $\theta$-directed. This gives that $\Phi$ is a.s. countable.

We now conclude the proof. Since $\Phi$ is a.s. countable (and letting $\mathrm{Leb}$ denote Lebesgue measure),
\begin{equation}
  \label{eq:lebesgue}
  \E \int_{0}^{2 \pi} \mathbf{1}_{\Phi}(u) \mathrm{d}u  = \E \mathrm{Leb}(\Phi) = 0\ .
\end{equation}
Interchanging the order of integration in \eqref{eq:lebesgue}, we get that $\prob(u \in \Phi) = 0$ for (Lebesgue-) almost every $u$ in $[0, 2 \pi)$.
\end{proof}

The proof of Theorem \ref{thm:lnunique} now proceeds by showing that the existence of parallel geodesics would imply that certain geodesic structures appear, then arguing that translation-invariance of the model prohibits these structures from existing. The way translation-invariance enters in the proof of Licea-Newman is via a ``lack of space'' argument due originally to Burton and Keane \cite{BK89}. We replace this step with an argument originally appearing in \cite{DH14}, which uses a 
\index{mass transport}%
``mass transport'' argument as encapsulated in the following lemma.
\begin{lemma}\label{lem:mass}
  Let $f(\cdot, \cdot): \, \Z^2 \times \Z^2 \to [0, \infty]$. Suppose that $f$ is translation-invariant in the sense that $f(x + z, y + z) = f(x, y)$ for all $x,\,y,\,z \in \Z^2$. Then
\begin{equation}\label{eq:mass}
\sum_{y \in \Z^2} f(x, y) = \sum_{y \in \Z^2} f(y, x) \quad \text{for all } x \in \Z^2\ . \end{equation}
\end{lemma}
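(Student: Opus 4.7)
My plan is to prove the identity by two applications of the translation invariance hypothesis: one shift to absorb $x$ out of each sum, and a second shift to swap the two slots of $f$.

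First I would use translation invariance with the shift $z = -x$ on each side. On the left,
\[
\sum_{y \in \Z^2} f(x,y) = \sum_{y \in \Z^2} f(0, y-x),
\]
and reindexing by $w = y - x$ (legitimate since we sum over all of $\Z^2$ and the summands are nonnegative) gives $\sum_{w} f(0, w)$. On the right,
\[
\sum_{y \in \Z^2} f(y,x) = \sum_{y \in \Z^2} f(y-x, 0) = \sum_{w} f(w, 0).
\]
So the lemma reduces to showing $\sum_w f(0,w) = \sum_w f(w,0)$.

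For this final reduction I would apply translation invariance once more, this time with the shift $z = -w$, which yields $f(0, w) = f(-w, 0)$. Substituting and reindexing by $v = -w$,
\[
\sum_{w \in \Z^2} f(0,w) = \sum_{w \in \Z^2} f(-w, 0) = \sum_{v \in \Z^2} f(v, 0),
\]
which matches the right-hand side. Throughout, the rearrangements are justified because $f$ takes values in $[0,\infty]$, so Tonelli/monotone convergence allows unrestricted reordering of the sums even when they diverge.

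Honestly, there is no real obstacle here; the content of the lemma is entirely in the bookkeeping of the two translations. The only point requiring a small amount of care is making sure we exploit the hypothesis $f(x+z, y+z) = f(x,y)$ in both its ``diagonal'' manifestations, i.e.\ both to strip the $x$ dependence and to convert $f(0,\cdot)$ into $f(\cdot, 0)$; a one-shift argument is not enough.
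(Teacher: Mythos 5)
Your proof is correct, and it is essentially the same argument as the paper's: the paper simply does it in one chain, reindexing $y = x+z$ and using $f(x,x+z) = f(x-z,x)$, which is exactly your identity $f(0,w)=f(-w,0)$ translated by $x$. Your extra preliminary reduction to the case $x=0$ changes nothing of substance, and your appeal to nonnegativity to justify the reindexing is the right (implicit in the paper) justification.
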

\begin{proof}
  Note that $\sum_{y \in \Z^2} f(x, y) = \sum_{z \in \Z^2} f(x, x+z)$. Moreover, translation-invariance gives $f(x, x+z) = f(x-z, x)$. Thus,
\[\sum_{y \in \Z^2} f(x, y) = \sum_{z \in \Z^2} f(x-z, x) = \sum_{y \in \Z^2} f(y,x)\ .\qedhere \]
\end{proof}
Lemma \ref{lem:mass} can be used as follows. Suppose we define a ``mass transport rule'' assigning a (random) nonnegative number $m(x,y)$ to each ordered pair $(x, y) \in \Z^2 \times \Z^2$. This is thought of as an amount of mass moved from vertex $x$ to vertex $y$. If $\E m(x + z ,y + z ) = \E m(x,y)$ for all vertices $x$, $y$, and $z$, then we can apply the lemma with $f = \E m$. In our application of this technique, we argue that if Theorem \ref{thm:lnunique} did not hold, then one could define such an $m$ in a way that $\sum_y m(x,y) \leq 1$ for all $x$, but such that $\sum_{y} \E m(y,x) = \infty$ for all $x$. This contradicts the equality \eqref{eq:mass}.

\begin{proof}[Sketch of proof of Theorem \ref{thm:lnunique}]
  The proof is easiest in the case that the edge weights have unbounded support --- that is, $\prob(t_e > \lambda) > 0$ for all $\lambda > 0$. We will give the proof assuming this in conjunction with our standing assumptions; we direct the reader to the original paper for the modifications that must be made if the weights have bounded support.

  The set $D$ of the theorem is the same as the $D$ from Lemma \ref{lem:lnlem}. For simplicity of notation, we assume that $e_1 \in D$ and show the statement for $\theta = e_1$. Let $F'$ be the event that, from each $x \in \Z^2,$ there is at most one unigeodesic in $\cT(x)$ having direction $e_1$; set $F = F' \cap \Omega_u$. It follows from Lemma \ref{lem:lnlem} (under the assumption $e_1 \in D$) that $\prob(F) = 1$.

The proof proceeds by contradiction. Assume that with positive probability there exist two distinct $e_1$-directed unigeodesics. By considering these geodesics after their last intersection, we may assume they are disjoint. Since they are $e_1$-directed, they eventually intersect each line $L_n = \{x \in \mathbb{R}^2: \, x \cdot e_2 = n\}$ for $n$ large. They also eventually pass each $L_n$ and never return; in particular, the geodesics have a last intersection point with each $L_n$ for $n$ large. Thus, there is some $n$ and some pair $x, y \in L_n$ such that $\prob(E_n(x,y)) > 0$, where
  \begin{align*}
    E_n(x,y) = \{&\text{there exist disjoint } e_1\text{-directed unigeodesics from } x,\, y\\
    &\text{ intersecting } L_n \text{ only at } x,\, y \text{ respectively}\}\ ,
  \end{align*}
  and we write $E(\cdot, \cdot) := E_0(\cdot, \cdot)$.

By translation-invariance of the model, we get $\prob(E(0, z_0)) > 0$ for some particular vertex $z_0$  on the positive $e_2$-axis. We have found two disjoint geodesics in the right half-plane; we now find a third. Letting $E'(x,y,z)$ (where $x,y,z \in L_0$) be the event that there are disjoint $e_1$-directed unigeodesics starting at and touching $L_0$ only at $x,$ $y$, and $z$ respectively, we claim that $\prob(E'(0, z_1, z_2 )) > 0$ for some vertices $z_1$ and $z_2$ of $L_0$ satisfying $0 < z_1 \cdot e_2 < z_2 \cdot e_2$. 

We first note that we can find an integer $m > 1$ such that \[\prob(E(0, z_0) \cap E(m z_0 , (m+1) z_0) ) > 0.\] This follows from the ergodic theorem: we have the almost sure and $L^2$- convergence
\[ \frac{1}{M} \sum_{k=0}^{M-1} \mathbf{1}_{E( k z_0, (k+1)z_0)} \rightarrow \prob(E(0, z_0))\ . \]
Thus $\prob(E(k_1 z_0, (k_1+1) z_0) \cap E( k_2 z_0, (k_2+1) z_0)) > 0$ for some $k_1 < k_2 - 1$ and hence (using translation invariance) $\prob(E(0, z_0) \cap E((k_2 - k_1) z_0, (k_2 - k_1 + 1) z_0)) > 0$.

On the event $F$, for each $x \in \Z^2$, let $\Gamma(x)$ denote the unique $e_1$-directed unigeodesic in $\cT(x)$ (if it exists). On $F$, for any $x,y$ such that $\Gamma(x) \cap \Gamma(y) \neq \varnothing$, the geodesics $\Gamma(x)$ and $\Gamma(y)$ are not distinct (they coalesce). Indeed, if $\Gamma(x)$ and $\Gamma(y)$ intersected and subsequently separated, there would be some $z \in \Gamma(x) \cap \Gamma(y)$ such that $\cT(z)$ has two distinct $e_1$-directed unigeodesics, an impossibility on $F$.

Take $m$ as above. We now show that $\prob(E'(0, z_0, (m+1) z_0)) > 0$, which follows from
\begin{equation}\label{eq:fourtothree}
E'(0, z_0, (m+1)z_0) \supseteq E(0, z_0) \cap E(m z_0, (m+1) z_0) \cap F\ . 
\end{equation}
 To see \eqref{eq:fourtothree} holds, we argue that on the event on the right-hand side, $\Gamma(0)$ cannot intersect $\Gamma(z_0), \Gamma(m z_0),$ or $\Gamma((m+1) z_0)$.  The fact that $\Gamma(0)$ cannot intersect $\Gamma(m z_0)$ is a consequence of the occurrence of $E(0,z_0)$.  If $\Gamma( z_0)$ did not intersect $\Gamma(m z_0)$, then it would be clear that $\Gamma(0)$ cannot intersect $\Gamma(m z_0)$ either, since it cannot intersect $L_0$ or $\Gamma(z_0)$. On the other hand, if $\Gamma(m z_0)$ and $\Gamma(z_0)$ intersect, then they must merge; this also prevents $\Gamma(0)$ from intersecting $\Gamma(m z_0)$ (since it would then merge with $\Gamma( z_0)$). Similar arguments hold for the other geodesics; out of $\Gamma(0),\,\Gamma(z_0),\, \Gamma(mz_0),$ and $\Gamma((m+1)z_0)$, only $\Gamma(z_0)$ and $\Gamma(m z_0)$ may merge. This establishes \eqref{eq:fourtothree}; see Figure \ref{fig:lnthree}.
\begin{figure}\label{fig:lnthree}
  \caption{The event $E'(0, z_0, (m+1) z_0)$ is implied by $E(0, z_0) \cap E(m z_0, (m+1) z_0)$ and uniqueness of $e_1$-directed geodesics. Although $\Gamma(z_0)$ and $\Gamma(m z_0)$ may coalesce (as depicted), $\Gamma(0)$ cannot intersect $\Gamma((m+1) z_0)$. $\Gamma(z_0)$ is drawn dashed to represent its role as part of a  barrier insulating $\Gamma((m+1)z_0)$ from $\Gamma(0)$; the vertical axis (representing $L_0$) completes the barrier. The edge set $\mathfrak{E}$ is depicted as the shaded gray rectangle. }
  \centering
    \includegraphics[width=0.5\textwidth]{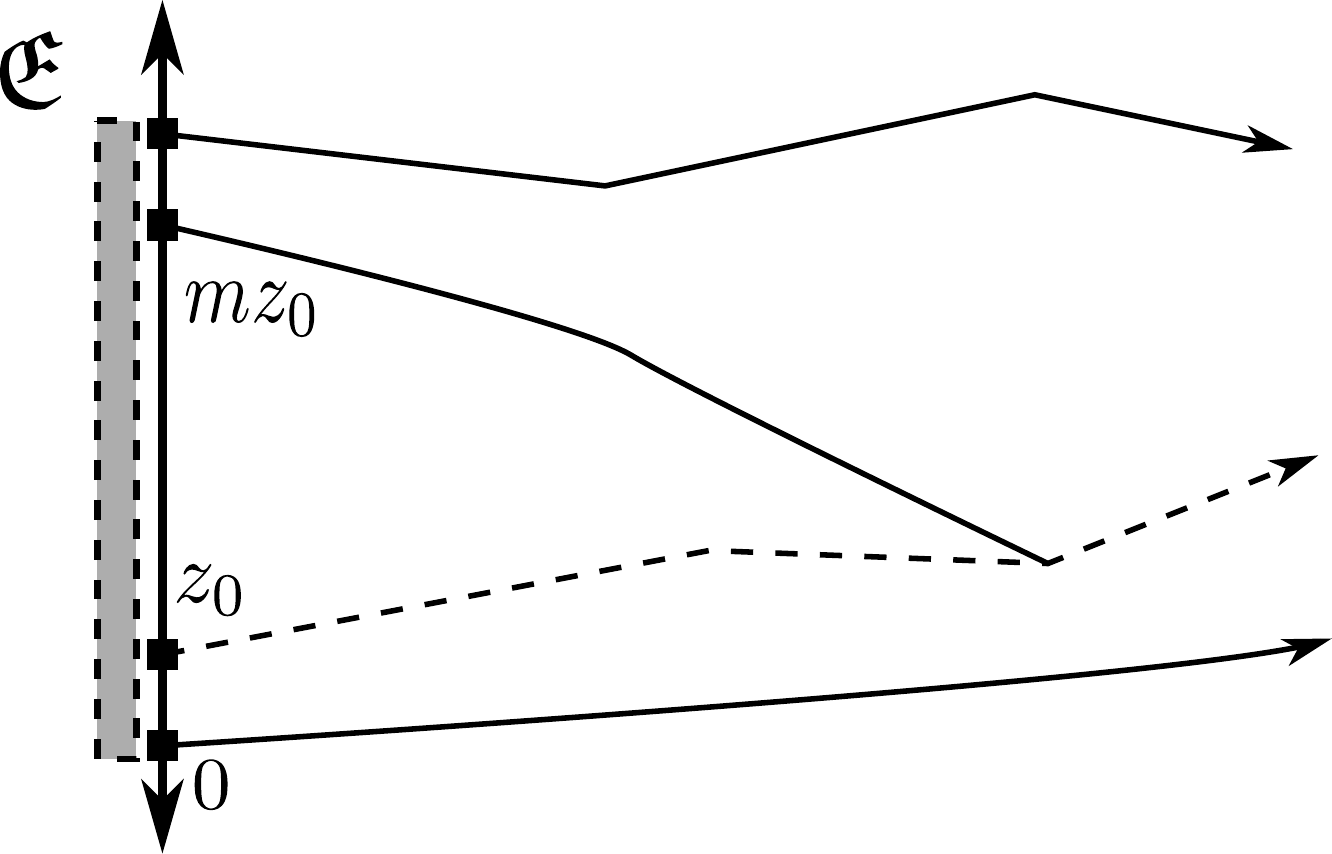}
\end{figure}

We now define the event $E'' = E''(0, z_0, (m+1) z_0)$ to be the subevent of $E'(0, z_0, (m+1)z_0)$ on which no geodesic $\Gamma(y)$ intersects $\Gamma(z_0)$, for any $y$ satisfying either 
\begin{itemize}
\item $y \cdot e_1 < 0$ or
\item $y \cdot e_1 = 0$ with either $y \cdot e_2 > (m+1) z_0 \cdot e_2$ or $y \cdot e_2 < 0$.
\end{itemize}
We argue $\prob(E'') > 0$. Consider the set $\mathfrak{E}$ of edges touching $L_0$ between $0$ and $(m+1)z_0$:
\[\mathfrak{E} = \{\{x - e_1, x\}:\, x \cdot e_1 = 0, \, 0 \leq x \cdot e_2 \leq (m+1) z_0 \cdot e_2\} \ . \]
Let $A_\lambda$ denote the event that, for each $f \in \mathfrak{E}$, there is a path $\gamma$ connecting the endpoints of $f$ which contains no edge of $\mathfrak{E}$ and which satisfies $T(\gamma) < \lambda$. Clearly $\prob(A_\lambda) \rightarrow 1$ as $\lambda \rightarrow \infty$, so we can fix a value of $\lambda > 0$ such that $\prob(A_\lambda \cap E'(0, z_0, (m+1)z_0)) > 0$.

Now we note that $A_\lambda \cap E'$ is increasing in the edge weights in $\mathfrak{E}$. In other words, given an outcome $(t_e) \in A_{\lambda} \cap E'$, if $(t_e') \in F$ is an outcome satisfying $t_e' = t_e$ for $e \notin \mathfrak{E}$ and $t_e' \geq t_e$ for $e \in \mathfrak{E}$, then $(t_e') \in A_{\lambda} \cap E'$ as well. The point is that if we increase edge weights which do not lie on some path $\Gamma$ which was a geodesic in the original edge-weight configuration, then $\Gamma$ is also a geodesic in the new configuration. 

We use this observation to show $\prob(E'') > 0$ by ``modifying'' outcomes in $A_\lambda \cap E'$ so that each $f \in \mathfrak{E}$ has $t_f > \lambda$ and so that the resulting outcome remains in $A_\lambda \cap E'$. Implementing this modification in a way which is guaranteed to produce a measurable set requires some care; see \cite[Lemma 3.1]{LN96}. The resulting event can be shown to have positive probability by the fact that $\prob(t_e > \lambda) > 0$ for every $\lambda$. 

The outcomes produced by the modification are in $E''$ because the geodesic $\Gamma(z_0)$ is ``shielded'' by $\mathfrak{E}$ and the surrounding geodesics. Indeed, if a $\Gamma(y)$ as in the definition of $E''$ intersected $\Gamma(z_0)$, it must first either cross $\Gamma(0)$, $\Gamma((m+1) z_0)$, or an edge of $\mathfrak{E}$. The first two possibilities are ruled out by the definition of $E'$ and the second is ruled out by the fact that $A_\lambda$ occurs but each edge of $\mathfrak{E}$ has weight $>\lambda$, so no geodesic can use an edge of $\mathfrak{E}$.

We now use the conclusion that $\prob(E'') > 0$ to derive a contradiction. Let $\prec$ denote the lexicographic ordering where $x \prec y$ if $x \cdot e_1 \leq y \cdot e_1$ or if $x \cdot e_1 = y \cdot e_1$ and $x \cdot e_2 \leq y \cdot e_2$. Define the function $m(x, y)$ for $x, y \in \Z^2$ as follows:
\begin{align*}
m(x,y) :=
\begin{cases}
  1, \quad &x \in \Gamma(y) \text{ and } x \notin \Gamma(z) \text{ for any } z \neq y \text{ satisfying } z \prec y;\\
  0, \quad &\text{otherwise.}
\end{cases}
\end{align*}
In other words, $m(x,y) = 1$ if $y$ is the $\prec$-minimal element of $\Z^2$ such that $x \in \Gamma(y)$. Note that $\sum_{y} m(x,y) \leq 1$ almost surely for any $x$, since there can be at most one such minimal ancestor for any $x$.

On the other hand, on the event $E'$, there is some $x \in L_0$ between $0$ and $(m+1)z_0$ such that $\sum_y m(y, x) = \infty$. Either $x = z_0$, or $x$ is some other vertex of $L_0$ between $0$ and $(m+1) z_0$ having $z_0 \in \Gamma(x)$. Using $\prob(E') > 0$ and translation invariance, $\sum_y \E m(y,0) = \infty$, and using the almost sure upper bound above gives $\sum_y \E m(0,y) \leq 1$. Setting $f(x,y) = \E m(x,y)$, translation-invariance of the model gives $f(x+z, y+z) = f(x,y)$. Applying Lemma \ref{lem:mass}, these estimates give a contradiction and complete the proof of the theorem.
\end{proof}

\section{Showing $\cN \geq 2$}
The first unconditional result showing that $\prob(\cN \geq 2) > 0$ is due to H\"aggstr\"om and Pemantle \cite{HP98} in the case that $d = 2$ and $F$ is an exponential distribution. We will not discuss the details of their proof, since it is very specialized to that setting. We shall instead show a more general result due to both Garet-Marchand \cite{GM05} and Hoffman \cite{H05}, which applies to all $\Z^d$:
\begin{theorem}[Garet-Marchand \cite{GM05}, Hoffman \cite{H05}]\label{thm:coex}
  Under our standing assumptions, $\prob(\cN \geq 2) > 0$.
\end{theorem}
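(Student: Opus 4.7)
The plan is to construct two Busemann-type additive cocycles as subsequential limits along opposite asymptotic directions, extract from each a unigeodesic starting at $0$, and argue via the shape theorem and translation-invariance that the two resulting geodesics must be distinct.

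\textbf{Step 1 (Busemann cocycles).} Fix the deterministic sequences $v_n^{(1)} = n e_1$ and $v_n^{(2)} = -n e_1$, and define
\[
B_n^{(i)}(x,y) = T(x, v_n^{(i)}) - T(y, v_n^{(i)}), \quad i \in \{1,2\}.
\]
The triangle inequality gives $|B_n^{(i)}(x,y)| \leq T(x,y)$ for every $n$, so a diagonal extraction on the countable set $(\Z^d \times \Z^d) \times \{1,2\}$ produces, on a full-probability event, a (random) subsequence $(n_k)$ along which $B^{(i)}(x,y) := \lim_k B_{n_k}^{(i)}(x,y)$ exists for every triple $(x,y,i)$. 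Each $B^{(i)}$ inherits additivity in its arguments and satisfies $|B^{(i)}(x,y)| \leq T(x,y)$.

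\textbf{Step 2 (Unigeodesics from cocycles).} For each vertex $z$, the first edge of $G(z, v_{n_k}^{(i)})$ lies in the finite set of $2d$ edges incident to $z$, so a further diagonal extraction (done simultaneously over all $z$) stabilizes this first edge along a subsequence to some $\{z, w\}$; along that subsequence $B_{n_k}^{(i)}(z, w) = t_{zw}$, and in the limit $B^{(i)}(z, w) = t_{zw}$ for some neighbor $w$ of $z$. Starting at $0$ and iterating the rule ``step from the current vertex to a neighbor $w$ saturating $B^{(i)}$'' produces a nearest-neighbor path $\gamma^{(i)}$. The cocycle identity gives
\[
\sum_{k=0}^{m-1} t_{x_k x_{k+1}} = B^{(i)}(x_0, x_m) \leq T(x_0, x_m)
\]
for every subsegment of $\gamma^{(i)}$, so each $\gamma^{(i)}$ is a unigeodesic from $0$.

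\textbf{Step 3 (Distinctness; the main obstacle).} The crux of the argument is showing $\gamma^{(1)}$ and $\gamma^{(2)}$ share only finitely many edges with positive probability. Heuristically the two cocycles ``pull'' in opposite directions along the $e_1$ axis: the shape theorem together with subadditivity suggests $B^{(1)}(0, m e_1) \approx m g(e_1)$ while $B^{(2)}(0, m e_1) \approx -m g(e_1)$ for large $m$, so $B^{(1)} \not\equiv B^{(2)}$. Without (Curve) one cannot extract an asymptotic direction for the geodesics $\gamma^{(i)}$ themselves, so the distinctness must be closed by a translation-invariance argument. I would adapt the mass-transport of Lemma \ref{lem:mass}: assume for contradiction that $|\gamma^{(1)} \triangle \gamma^{(2)}| < \infty$ almost surely, so $\gamma^{(1)}$ and $\gamma^{(2)}$ coalesce; since the construction is shift-compatible, the same coalescence holds for the analogously-built geodesics $\gamma_x^{(i)}$ at every translate $x$, forcing $B^{(1)}$ and $B^{(2)}$ to share their asymptotic increments along $e_1$ and contradicting the slope computation above. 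Implementing this final step rigorously, while preserving measurability and translation-invariance across the random subsequence, is the technical heart of the theorem and the point at which the Garet--Marchand proof (via a two-type coexistence argument) and the Hoffman proof (via a Busemann-function pigeonhole) take different routes.
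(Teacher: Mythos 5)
Your Steps 1--2 are fine as far as they go (and in fact they anticipate the machinery the paper only deploys later, in Section 7, where subsequential Busemann limits and the geodesic graph $\mathbb{G}$ are built carefully on an enlarged space). But the theorem's entire content sits in your Step 3, and the argument you sketch there has concrete gaps rather than merely unfinished details. First, the ``slope'' claim $B^{(1)}(0,me_1)\approx m\,g(e_1)$ does not follow from the shape theorem: $T(0,ne_1)-T(me_1,ne_1)=[n g(e_1)+o(n)]-[(n-m)g(e_1)+o(n)]$, and the two $o(n)$ errors do not cancel --- they can completely swamp $m g(e_1)$. This is exactly the difficulty the Garet--Marchand proof in the paper is built to circumvent: it never controls $B_{ne_1}$ pointwise along a subsequence, but instead telescopes expectations, $\tfrac1n\sum_{K=1}^n\bfE B_{K\ell e_1}(0,\ell e_1)\to \ell g(e_1)$, and then uses Fatou to get only $\bfE\limsup_n B_{ne_1}(0,\ell e_1)\ge \ell g(e_1)$ --- a statement about limsups in expectation, not about the value of a limit taken along an arbitrary (random) subsequence such as your $(n_k)$. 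Second, your limits $B^{(i)}$ are taken along a configuration-dependent subsequence, so they are not shift-covariant, and neither the ergodic theorem nor the mass-transport principle of Lemma \ref{lem:mass} can be applied to them; restoring stationarity is precisely why the paper's Section 7 averages the measures $\mu_\alpha$ over hyperplane levels before taking a weak limit (Lemma \ref{lem:transinv}). Third, even granting both slopes, your contradiction scheme does not close: coalescence of $\gamma^{(1)}$ and $\gamma^{(2)}$ would force equality of the Busemann functions $B_{\gamma^{(1)}}$ and $B_{\gamma^{(2)}}$ defined \emph{along those geodesics} (property (5) of Lemma \ref{lem:buseprops}), but your $B^{(i)}$ are limits along the points $\pm n_k e_1$, which are not the vertices of $\gamma^{(i)}$; nothing forces $B^{(1)}=B^{(2)}$ when the geodesics merge, so no contradiction is produced.

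For comparison, the paper's two routes avoid all three issues. Garet--Marchand never builds a limiting cocycle or limiting geodesic from a single subsequence: they show the two limsup events in Corollary \ref{cor:halfinf} occur simultaneously with positive probability (via the telescoping-average estimate above and the deterministic bound $|B_{ne_1}(0,\ell e_1)|\le T(0,\ell e_1)$ with $\ell$ chosen so $\bfE T(0,\ell e_1)<\tfrac32\ell g(e_1)$), and then extract two distinct geodesics through the infection-model Lemma \ref{lem:generalco}, whose sets $G_i$ are disjoint by construction. Hoffman sidesteps directional slopes entirely: assuming $\prob(\cN=1)=1$, the Busemann function of the unique coalescing geodesic family \emph{is} honestly translation-covariant and has mean zero by lattice symmetry, so the ergodic theorem plus a shape-theorem patching argument gives global sublinearity \eqref{eq:hoferg2}, which contradicts $B=T$ (linear growth) along the geodesic itself. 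If you want to pursue your construction, you would essentially need to re-prove the stationarity-restoring averaging of Section 7 and replace your slope heuristic by an expectation identity in the spirit of Theorem \ref{thm:buseasymp} --- at which point you have reconstructed the paper's later, heavier argument rather than a proof of Theorem \ref{thm:coex}.
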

We will give an adaptation of the proof of Garet-Marchand in this section, with an eye towards subsequently introducing the Busemann function methods of Hoffman.

Our main goal is to show that with positive probability there exist sequences $m_k, \, n_k \rightarrow \infty$ such that $\Gamma_0 := \lim_k G(0, - n_k e_1)$ and $\Gamma_1:= \lim_k G(e_1, m_k e_1)$ are distinct. We will actually show something more general, which illustrates an ``infection model'' principle for finding multiple geodesics; see Lemma \ref{lem:generalco} below. Let us define, for each $z \in \Z^d$, $B_z(x,y) := T(x, z) - T(y, z)$. Note that $B_z(x,y) = -B_z(y,x)$.

We shall see we can find distinct unigeodesics $\Gamma_0$ and $\Gamma_1$ if we can show $B_{ne_1}(0, e_1)$ is ``positively biased'' for large positive $n$. In fact, it is easier to show this with $e_1$ replaced by $k e_1$, because the control we get over $B$ will be asymptotic.

\begin{lemma}\label{lem:generalco}
 Fix a finite set of vertices $\{x_1, \ldots, x_k\} \subseteq \Z^d$. 
    Let us define $G_i,\, 1 \leq i \leq k$ by
  \[G_i:= \{z \in \Z^d: B_z(x_i, x_j) < 0 \quad \text{for all } j \neq i \} \]
--- i.e., $G_i$ is the set of vertices closest to $x_i$ in the $T$-metric. On the event that $|G_i| = \infty$ for all $1 \leq i \leq k$, we have $\cN \geq k$.
\end{lemma}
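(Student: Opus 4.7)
The plan is to produce, on the event $\bigcap_{i=1}^k \{|G_i| = \infty\}$, an explicit collection of $k$ pairwise distinct unigeodesics, one starting at each $x_i$. Working throughout on the uniqueness event $\Omega_u$ from Lemma \ref{lem:unique}, I would begin by choosing, for each $1 \leq i \leq k$, a sequence $(z_n^{(i)})_n \subseteq G_i$ with $|z_n^{(i)}| \to \infty$ (possible since $G_i$ is infinite by hypothesis). Applying the diagonal argument used in the proof of Proposition \ref{prop:onegeo} to the finite geodesics $(G(x_i, z_n^{(i)}))_n$ yields, along a subsequence, a unigeodesic $\Gamma_i$ starting at $x_i$; the verification that $\Gamma_i$ is an infinite geodesic is identical to that at the end of Proposition \ref{prop:onegeo}.

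The remaining task is to verify that $\Gamma_i$ and $\Gamma_j$ are distinct for $i \neq j$. Suppose, for contradiction, that they fail to be distinct for some such pair. By Proposition \ref{prop:distinct}, working on $\Omega_u$, this forces $\Gamma_i$ and $\Gamma_j$ to share an infinite common tail, so I may pick a vertex $w \in \Gamma_i \cap \Gamma_j$ past the coalescence point. Since $w$ lies at a fixed position along $\Gamma_i$, the defining sense of convergence $G(x_i, z_n^{(i)}) \to \Gamma_i$ yields $w \in G(x_i, z_n^{(i)})$ for all large $n$, and hence
\begin{equation*}
T(x_i, z_n^{(i)}) = T(x_i, w) + T(w, z_n^{(i)}).
\end{equation*}
Combining this identity with the triangle inequality $T(x_j, z_n^{(i)}) \leq T(x_j, w) + T(w, z_n^{(i)})$ and subtracting gives
\begin{equation*}
B_{z_n^{(i)}}(x_i, x_j) = T(x_i, z_n^{(i)}) - T(x_j, z_n^{(i)}) \geq T(x_i, w) - T(x_j, w).
\end{equation*}
Since $z_n^{(i)} \in G_i$, the left-hand side is strictly negative, so $T(x_i, w) < T(x_j, w)$. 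The identical argument with the roles of $i$ and $j$ interchanged --- using $w \in \Gamma_j$ and the sequence $(z_n^{(j)})_n \subseteq G_j$ --- yields $T(x_j, w) < T(x_i, w)$, a contradiction. Thus $\Gamma_1, \ldots, \Gamma_k$ are pairwise distinct, and $\cN \geq k$ on the event in question.

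The principal conceptual step is the second one: the defining inequality $B_z(x_i, x_j) < 0$ for $z \in G_i$ is transported, via triangle inequality at a putative coalescence vertex $w$, into a definite sign constraint on $T(x_i, w) - T(x_j, w)$, which then conflicts with the symmetric constraint obtained from the sequence $(z_n^{(j)})_n$. I do not anticipate a serious technical obstacle beyond this: the diagonal extraction is standard, and the characterization of non-distinctness as coalescence of infinite tails is already supplied by Proposition \ref{prop:distinct}.
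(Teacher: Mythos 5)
Your argument is correct and is essentially the paper's proof: both construct $\Gamma_i$ as a subsequential limit of the geodesics $G(x_i,\cdot)$ to points of $G_i$ via the diagonal argument, and both use the same key inequality (the geodesic decomposition of $T(x_i, z_n^{(i)})$ at an intermediate vertex plus subadditivity from the other base point) to transfer the sign of $B_z(x_i,x_j)$ from $z \in G_i$ to vertices on the limiting geodesic. The only difference is packaging: the paper shows that every vertex of $\Gamma_i$ lies in $G_i$, so the $\Gamma_i$ are pairwise disjoint because the $G_i$ are, whereas you run the same estimate symmetrically at a hypothetical common vertex $w$ --- which also makes your appeal to Proposition \ref{prop:distinct} unnecessary, since any single shared vertex already produces the contradiction.
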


\begin{corollary}
Assume that for some $\ell > 0$,
\label{cor:halfinf}
\begin{equation}
  \label{eq:halfinf}
  \prob\left(\left\{\limsup_{n \rightarrow \infty} B_{ne_1}(0, \ell e_1) > 0\right\} \cap \left\{\limsup_{n \rightarrow \infty} B_{-ne_1}(\ell e_1, 0) > 0\right\} \right) > 0\ .
\end{equation}
Then $\prob(\cN \geq 2) > 0$.
\end{corollary}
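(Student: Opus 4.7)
The plan is to deduce the corollary as a direct application of Lemma \ref{lem:generalco} with $k = 2$, $x_1 = 0$, and $x_2 = \ell e_1$. With these choices, the two ``regions'' in the lemma become
\[G_1 = \{z \in \Z^d : B_z(0, \ell e_1) < 0\} = \{z : T(0,z) < T(\ell e_1, z)\}\]
and $G_2 = \{z : T(\ell e_1, z) < T(0, z)\}$, i.e., the sets of vertices $T$-closer to $0$ and to $\ell e_1$ respectively. Lemma \ref{lem:generalco} then tells us that on the event $\{|G_1| = \infty\} \cap \{|G_2| = \infty\}$ we have $\cN \geq 2$, so it suffices to show that the event in \eqref{eq:halfinf} is contained in this intersection up to a $\prob$-null set.

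The key step is the translation of the two $\limsup$ conditions into infinite membership statements for $G_1$ and $G_2$. First, if $\limsup_{n \to \infty} B_{ne_1}(0, \ell e_1) > 0$, then there is a (random) $\delta > 0$ and infinitely many $n$ for which $T(0, ne_1) - T(\ell e_1, ne_1) > \delta$; for each such $n$, the vertex $ne_1$ is strictly $T$-closer to $\ell e_1$ than to $0$ and hence lies in $G_2$. So $|G_2| = \infty$ on this event. Symmetrically, $\limsup_{n \to \infty} B_{-ne_1}(\ell e_1, 0) > 0$ produces infinitely many vertices $-ne_1 \in G_1$, giving $|G_1| = \infty$. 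Intersecting, the event on the left-hand side of \eqref{eq:halfinf} is contained in $\{|G_1| = \infty\} \cap \{|G_2| = \infty\}$.

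Applying Lemma \ref{lem:generalco} on this event yields $\cN \geq 2$, and since the event has positive probability by hypothesis, $\prob(\cN \geq 2) > 0$, as required. There is no real obstacle here: all the substantive work is packaged into Lemma \ref{lem:generalco}, and the corollary is essentially a bookkeeping statement that identifies a concrete (and testable in applications) sufficient condition ensuring the ``infection regions'' along the $\pm e_1$ axes are both infinite. The only mild subtlety is noting that a strictly positive $\limsup$ (not just nonnegative) is used to guarantee the strict inequality in the definition of $G_i$, which in turn is why the statement is phrased with $\limsup > 0$ rather than $\limsup \geq 0$.
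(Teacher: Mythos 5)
Your proof is correct and is essentially the paper's own argument: take $x_1 = 0$, $x_2 = \ell e_1$ in Lemma \ref{lem:generalco}, observe that the two $\limsup$ conditions in \eqref{eq:halfinf} place infinitely many vertices $ne_1$ in $G_2$ and infinitely many $-ne_1$ in $G_1$, and conclude $\cN \geq 2$ on that positive-probability event. The paper states this in two lines; your unpacking of the membership conditions (and the remark on why strict positivity of the $\limsup$ is needed) is accurate.
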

\begin{proof}

Letting $x_1 = 0,\,x_2 = \ell e_1$, on  the event in \eqref{eq:halfinf} we have $|G_i| = \infty,\, i = 1, \, 2$. The result now follows from Lemma \ref{lem:generalco}\ .
\end{proof}

\begin{proof}[Proof of Lemma \ref{lem:generalco}]
  As a preliminary, we observe that $G_i$ and $G_j$ are disjoint by construction, and that with probability one $x_i \in G_i$. We will show that for each $i$ there is an infinite path $\Gamma_i$ of vertices of $G_i$, having $x_i$ as its initial vertex, which is actually a unigeodesic.

  For each $i$, choose a sequence $(z_k(i))_k$ of distinct elements of $G_i$ such that $\Gamma_i:= \lim_k G(x_i, z_k(i))$ exists; that this is possible follows via the 
  \index{diagonal argument}%
  diagonal argument of the proof of Proposition \ref{prop:onegeo}. We claim that each vertex of $\Gamma_i$ is an element of $G_i$. If there were some $z \in \Gamma_i$ with $z \notin G_i$, then for some $j$, $T(x_i, z) \geq T(x_j, z)$. But since $z \in G(x_i, z_k(i))$ for some $k$, we have $T(x_i, z_k(i)) = T(x_i, z) + T(z, z_k(i)$).

On the other hand, subadditivity gives
\begin{align*}
  T(x_j, z_k(i)) \leq T(x_j, z) + T(z, z_k(i)) &\leq T(x_i, z) + T(z, z_k(i))= T(x_i, z_k(i))\ ,
\end{align*}
so $z_k(i) \notin G_i$. This is a contradiction.
\end{proof}

The structure of Lemma \ref{lem:generalco} is based on viewing the FPP model as defining a 
\index{competition model}%
competition model ($G_i$ is the set of sites ``infected'' by a disease with initial patient located at site $x_i$; sites can be infected only by a single type of disease). This perspective on the geodesic question appeared in the original paper of H\"aggstr\"om-Pemantle. The proof of Lemma \ref{lem:generalco} amounts to showing that each infection spreads along FPP geodesics.

\begin{proof}[Proof of Theorem \ref{thm:coex}]
  We use contradiction to show that hypothesis \eqref{eq:halfinf} of Corollary \ref{cor:halfinf} holds. So choose $\ell$ so large that $\bfE T(0, \ell e_1) < 3 \ell g(e_1)/2$ (this choice will become clear in a moment), and defining
\[  A_0:= \left\{\limsup_{n \rightarrow \infty} B_{ne_1}(0, \ell e_1) > 0\right\} \quad \text{and}\quad A_\ell:= \left\{\limsup_{n \rightarrow \infty} B_{-ne_1}(\ell e_1, 0) > 0\right\}\ , \]
assume that $\prob(A_0 \cap A_\ell) = 0$. We begin with a simple bound: note that the triangle inequality implies that $T(0, n e_1) \leq T(0, \ell e_1) + T(\ell e_1, n e_1)$, so $B_{n e_1}(0, \ell e_1) \leq T(0, \ell e_1)$ for all $n$. (A similar argument gives $B_{ne_1}(0, \ell e_1) \geq - T(0, \ell e_1)$ and in particular that $B_{ne_1}(0, \ell e_1)$ is integrable).

We can of course bound $B_{n e_1}(0, \ell e_1)$ by $0$ on the event $A_0^c$, and give a similar bound for $B_{-ne_1}(\ell e_1, 0)$. In particular,
\begin{align}
  \limsup_{n \rightarrow \infty} B_{ne_1}(0, \ell e_1) +  \limsup_{m \rightarrow \infty}B_{-me_1}(\ell e_1,0)  &\leq T(0, \ell e_1) \mathbf{1}_{A_0} + T(\ell e_1, 0) \mathbf{1}_{A_\ell} \nonumber\\
  &= T(0, \ell e_1)[\mathbf{1}_{A_0} + \mathbf{1}_{A_\ell}]\ . \label{eq:busegm}
\end{align}
Since $\prob(A_0 \cap A_{\ell}) = 0$, we have $\mathbf{1}_{A_0} + \mathbf{1}_{A_\ell} \leq 1$ almost surely. Using this and our choice of $\ell$, we take expectations in \eqref{eq:busegm} to see
\begin{equation}
\label{eq:busesmall}
\bfE\left[\limsup_{n \rightarrow \infty} B_{ne_1}(0, \ell e_1) +  \limsup_{m \rightarrow \infty}B_{-me_1}(\ell e_1,0)\right] \leq \bfE T(0, \ell e_1) < 3 \ell g(e_1)/2\ . \end{equation}

So far we have not done anything to show that there is anything wrong with what has been derived. The key step is to show that $B_{n e_1} (0, \ell e_1)$ is typically of order $(\ell - \varepsilon)g(e_1)$ for a large density of $n$. Combined with a corresponding bound for the other term, we show that the left-hand side of \eqref{eq:busesmall} is of order $(2 \ell - \varepsilon) g(e_1)$, in contradiction to the bound on the right-hand side. This contradicts the assumption that $\prob(A_0 \cap A_\ell) = 0$.

The estimate described above is shown following an averaging trick; a more sophisticated version of this will appear in the proof of Theorem \ref{thm:buseasymp} below. 
Considering multiples of $\ell e_1$, we consider terms of the form
\begin{equation}
  \label{eq:gmt}
  B_{K \ell e_1}(0, \ell e_1) = T(0, K \ell e_1) - T(\ell e_1, K \ell e_1).
\end{equation}
Note that the model is invariant under lattice shifts, so
\begin{equation}
  \label{eq:gmavg}
  \begin{split}
  \bfE B_{K \ell e_1}(0, \ell e_1) 
  &= \bfE T(0, K \ell e_1) - \bfE T(\ell e_1, K \ell e_1) \\
  &= \bfE T(0, K \ell e_1) - \bfE T(0, (K - 1)\ell e_1)\ .
  \end{split}
\end{equation}
This shifts the perspective of the infection process and allows us to apply shape theorem results.

Applying \eqref{eq:gmavg} to terms of the type \eqref{eq:gmt} gives
\begin{equation*}
\bfE \sum_{K=1}^n B_{K \ell e_1}(0, \ell e_1) = \bfE T(0, n \ell e_1)\ . \end{equation*}
Dividing by $n$ and using the shape theorem gives that
\begin{equation}
  \label{eq:gmavg2}
    \lim_{n \rightarrow \infty} \frac{1}{n} \sum_{K=1}^n\bfE B_{K \ell e_1}(0, \ell e_1) = g(\ell e_1) = \ell g(e_1).
\end{equation}
In particular, \eqref{eq:gmavg2} gives the bound $\limsup_{n \rightarrow \infty} \bfE B_{n e_1} (0, \ell e_1) \geq \ell g(e_1)$, with a similar bound for $\limsup_{m \rightarrow \infty} B_{-m e_1} (\ell e_1, 0)$.
Since $|B_{z}(0, \ell e_1)| \leq  T(0, \ell e_1)$, it follows (e.g. by Fatou's lemma) that
\[\bfE \limsup_{n \rightarrow \infty} B_{n e_1}(0, \ell e_1) \geq \limsup_{n \rightarrow \infty} \bfE B_{n e_1}(0, \ell e_1)  \geq \ell g(e_1)\ .\]
Applying this in \eqref{eq:busesmall} (with the corresponding bound for the other term) completes the proof.
\end{proof}

\section{Hoffman's method and Busemann functions}
One can think of the function $B_{n e_1}(x,y)$ from the preceding section as a sort of ``relative distance to infinity in direction $e_1$'' between points $x$ and $y$, for $n$ very large. If one could show that the limit $B:= \lim_n B_{n e_1}$ existed, one could work directly with the object $B$ in Garet-Marchand's proof, which would make certain results more natural. For instance, a part of their proof amounts to showing that $\bfE B_{n e_1}(0,e_1) \approx g(e_1)$ for a large density of but potentially not all $n$ --- given existence of $B$, we could hope instead to show $\bfE B(0, e_1) = g(e_1)$ and avoid working with subsequences.

We will come back to this perspective in the $e_1$-direction and other fixed directions later, though the idea of working directly with a limiting difference of passage times will be a key point of this section. Our main goal here will be to outline Hoffman's approach to Theorem \ref{thm:coex}. The key idea is to notice that there is a natural object similar to $B_{n e_1}$ for which limits are easily shown to exist and which has similar implications for geodesics. This object is called a 
\index{Busemann function}%
{\bf Busemann function} and was originally introduced by Busemann in the study of (nonrandom) metric geometry.

\begin{definition}
  Consider a fixed realization of edge weights and suppose $\gamma = (z_1, z_2, \ldots)$ is a unigeodesic in this configuration. The Busemann function $B_\gamma(\cdot,\cdot)$ is defined by 
\begin{equation}
\label{eq:buselim}
B_\gamma(x,y):= \lim_{n\rightarrow \infty}[T(x, z_n) - T(y, z_n)]\ .
\end{equation}
\end{definition}
\begin{clam}\label{clam:buselim}
  The limit \eqref{eq:buselim} exists for any configuration of non-negative edge weights, each pair $x,\, y$, and each unigeodesic $\gamma$.
\end{clam}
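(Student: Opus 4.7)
The plan is to reduce a two-point limit to a one-point limit by choosing a basepoint on $\gamma$. Write
\[T(x,z_n) - T(y,z_n) = \bigl[T(x,z_n) - T(z_1,z_n)\bigr] - \bigl[T(y,z_n) - T(z_1,z_n)\bigr],\]
so it suffices to prove that for any fixed vertex $w \in \Z^d$ the sequence
\[a_n(w) := T(w, z_n) - T(z_1, z_n)\]
converges as $n \to \infty$. Subtracting the limits for $w = x$ and $w = y$ then yields $B_\gamma(x,y)$.

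First, I would show that $a_n(w)$ is non-increasing in $n$. The key structural input is that $\gamma$ is a unigeodesic: for any $1 \leq m < n$, the subsegment $(z_1, \ldots, z_n)$ of $\gamma$ is a finite geodesic, hence
\[T(z_1, z_n) = T(z_1, z_m) + T(z_m, z_n).\]
Using this identity and the triangle inequality $T(w, z_n) \leq T(w, z_m) + T(z_m, z_n)$, I compute
\[a_n(w) - a_m(w) = \bigl[T(w,z_n) - T(w,z_m)\bigr] - \bigl[T(z_1,z_n) - T(z_1,z_m)\bigr] \leq T(z_m, z_n) - T(z_m, z_n) = 0.\]
So $(a_n(w))_{n \geq 1}$ is monotonically non-increasing.

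Next, I would bound $a_n(w)$ below. The triangle inequality $T(z_1, z_n) \leq T(z_1, w) + T(w, z_n)$ gives $a_n(w) \geq -T(w, z_1)$. Being non-increasing and bounded below, the sequence $(a_n(w))_n$ converges in $\mathbb{R}$, which is exactly what was needed.

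There is no real obstacle here; the entire argument is monotone convergence once the reference point $z_1$ is fixed. The only subtlety worth flagging is why one cannot run the same argument with $z_1$ replaced by an arbitrary point off $\gamma$: the additivity $T(z_1, z_n) = T(z_1, z_m) + T(z_m, z_n)$ relies crucially on $z_1, z_m, z_n$ lying in order along a single geodesic, and this is precisely where the unigeodesic hypothesis on $\gamma$ is used.
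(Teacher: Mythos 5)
Your proposal is correct and follows essentially the same route as the paper: the paper also first proves monotone convergence of $T(x,z_n)-T(z_1,z_n)$ (geodesicity along $\gamma$ plus subadditivity) together with a triangle-inequality bound, and then handles general $y$ by the same two-term decomposition with basepoint $z_1$. The only difference is cosmetic ordering --- you decompose first and then prove the one-point limits, while the paper treats $y=z_1$ first --- so there is nothing substantive to add.
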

\begin{proof}
  Note that the triangle inequality gives that the terms on the right-hand side of \eqref{eq:buselim} are bounded in magnitude by $T(x,y)$. For instance, $T(x, z_n) \leq T(x, y) + T(y, z_n)$ gives $T(x, z_n) - T(y, z_n) \leq T(x,y)$. This implies that if we can show the monotonicity (in $n$) of these terms for a fixed pair $x,\,y$, we will have established existence of the limit.

  We first consider the case $y = z_1$. By geodesicity of $\gamma$, we have for each $1<i < n$ that $T(z_1, z_n) = T(z_1, z_i) + T(z_i, z_n)$. On the other hand, subadditivity of course yields $T(x, z_n) \leq T(x, z_i) + T(z_i, z_n)$. Thus,
  \begin{align*}
    T(x, z_n) - T(z_1, z_n) 
    &\leq T(x, z_i) + T(z_i, z_n) - [T(z_1, z_i) + T(z_i, z_n)] \\
    &= T(x, z_i) - T(z_1, z_i)\ .
  \end{align*}
Therefore, the terms on the right-hand side of \eqref{eq:buselim} are nonincreasing in $n$, which (combined with the boundedness already established) shows existence of the limit.

Now we consider the case $y \neq z_1$. We can rewrite the typical term on the right-hand side of \eqref{eq:buselim} as
\begin{equation}
\label{eq:busetel}
T(x,z_n) - T(y, z_n) = [T(x, z_n) - T(z_1, z_n)] - [T(y, z_n) - T(z_1, z_n)]\ . \end{equation}
Since the limits of each of the bracketed terms on the right-hand side of \eqref{eq:busetel} exist, so does the limit of the left-hand side.
\end{proof}

Being a difference of passage times, $B_\gamma$ inherits several properties which are key to its analysis.
\begin{lemma}\label{lem:buseprops}
  On the event $\Omega_u$, the following hold for each unigeodesic $\gamma = (z_1, z_2, \ldots)$ and all vertices $x,\,y,\,z \in \Z^d$:
  \begin{enumerate}
    \item[{\rm(1)}] $|B_\gamma(x,y)| \leq T(x,y)$ ;
  \item[{\rm(2)}] $B_\gamma (x,y) + B_\gamma (y,z) = B_\gamma (x,z)$ ;
      \item[{\rm(3)}] $B_\gamma(y,x) = -B_\gamma(x,y)$;
      \item[{\rm(4)}] If $x = z_i$ and $y = z_j$ with $i < j$ (so $x$ appears before $y$ in the unigeodesic), then $B_\gamma (x,y) = T(x,y)$.
        \item[{\rm(5)}] If $\widetilde \gamma$ is another unigeodesic which is not distinct from $\gamma$, then $B_{\gamma} = B_{\widetilde \gamma}$.
  \end{enumerate}
\end{lemma}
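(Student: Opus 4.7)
The plan is to reduce each of the five items to a pre-limit statement that survives passage to the limit in the definition (\ref{eq:buselim}), exploiting the fact that Claim~\ref{clam:buselim} already guarantees existence of the limit for any non-negative edge-weight configuration. Throughout I would work on $\Omega_u$ so that geodesics are unique.

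Items (1), (2), and (3) follow at once. For (1), the triangle inequality gives $|T(x,z_n)-T(y,z_n)|\leq T(x,y)$ for every $n$, and sending $n\to\infty$ preserves the inequality. For (3), the pre-limit difference is antisymmetric at each $n$. For (2), at every $n$ one has the telescoping identity
\[
[T(x,z_n)-T(y,z_n)]+[T(y,z_n)-T(z,z_n)]=T(x,z_n)-T(z,z_n),
\]
and since all three limits exist, additivity passes to the limit.

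Item (4) uses geodesicity of subpaths. If $x=z_i$ and $y=z_j$ with $i<j$, then for every $n>j$ the segment $(z_i,z_{i+1},\ldots,z_n)$ is a subpath of $\gamma$, hence equal to $G(x,z_n)$, and its tail $(z_j,\ldots,z_n)$ is $G(y,z_n)$. Thus $T(x,z_n)=T(x,y)+T(y,z_n)$ for all such $n$, so the pre-limit sequence in (\ref{eq:buselim}) is eventually the constant $T(x,y)$, and the limit equals $T(x,y)$.

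The only item needing a substantial external input is (5), and this is the main step I expect to require care. Here I would invoke Proposition~\ref{prop:distinct}: on $\Omega_u$, two unigeodesics $\gamma$ and $\widetilde\gamma$ that fail to be distinct must coalesce, i.e.\ there exist indices $i_0,j_0$ such that $z_{i_0+k}=\widetilde z_{j_0+k}$ for every $k\geq 0$. Hence for all sufficiently large $n$ the vertices $z_n$ and $\widetilde z_{n+(j_0-i_0)}$ coincide, so the two pre-limit sequences $T(x,z_n)-T(y,z_n)$ and $T(x,\widetilde z_n)-T(y,\widetilde z_n)$ agree up to a fixed index shift. Both limits exist by Claim~\ref{clam:buselim}, and a shift of the index cannot change the value of a convergent sequence's limit, so $B_\gamma(x,y)=B_{\widetilde\gamma}(x,y)$ for all $x,y$. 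The only subtlety lies in correctly extracting coalescence from Proposition~\ref{prop:distinct}; once that is in hand, the Busemann limits automatically coincide because they are computed along eventually identical vertex sequences.
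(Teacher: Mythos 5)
Your proposal is correct and follows essentially the same route as the paper: items (1)--(4) via the pre-limit identities (triangle inequality, telescoping, antisymmetry, and geodesicity of subsegments of $\gamma$), and item (5) by extracting coalescence from Proposition~\ref{prop:distinct} on $\Omega_u$ so that the defining sequences agree up to a fixed index shift. No gaps.
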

\begin{proof}
  Property (1) already was shown during the proof of Claim \ref{clam:buselim} to establish boundedness of the sequence appearing in \eqref{eq:buselim}. Properties (2)--(3) follow easily from the fact that $B_\gamma$ is defined as a difference: for instance, to see (3), write
  \begin{equation}
    \label{eq:diff}
    T(y, z_n) - T(x, z_n) = -[T(x,z_n) - T(y,z_n)]
  \end{equation}
  and take the limit of \eqref{eq:diff} as $n \rightarrow \infty$.

To see (4), note that since $\gamma$ is a geodesic, we have for $n > j$:
\[T(x, z_n) = T(z_i, z_n) =  T(z_i, z_j) + T(z_j, z_n) = T(x,y) + T(y, z_n)\ . \]
Subtracting $T(y, z_n)$ from both sides and taking limits establishes (4).

Property (5) is where we use  $\Omega_u$. Let $\widetilde \gamma  = (z_1', z_2',\, \ldots)$. By Proposition \ref{prop:distinct}, we see that the unigeodesics coalesce, so there are some $i$ and $j$ such that $z_i = z_j',\, z_{i+1} = z_{j+1}', \ldots$. In particular, the limit appearing in \eqref{eq:buselim} is unchanged if we replace $z_n$ by $z_n'$.
\end{proof}

We now are almost equipped to give Hoffman's version of a proof of Theorem \ref{thm:coex}. Since the argument involves applying an ergodic theorem, we take a moment here to define the operators which shift configurations by integer vectors.
\begin{definition}\label{defin:trans}
  Let $z \in \Z^d$, and let $\omega$ be a realization of the edge weights $(t_e)$. We define the 
  \index{shift}%
  shift $\theta_z$ to be the operator which acts on $\omega$, producing a new configuration $\theta_z \omega$, as follows: $\theta_z \omega$ is the realization of edge weights $(t_e')$, where $t'_{\{x,y\}} = t_{\{x + z,\, y + z\}}.$
\end{definition}
\begin{proof}[Proof of Theorem \ref{thm:coex}]
  Assume that $\prob(\cN = 1) = 1$. Using Proposition \ref{prop:onegeo}, we see that we are guaranteed the existence of a unigeodesic $\gamma$ beginning from $0$. This $\gamma$ must be a.s. unique, in the strong sense that there cannot exist an infinite path from $0$ which is a unigeodesic, other than $\gamma$. Indeed, if there were another unigeodesic $\widetilde \gamma$ starting from $0$, then by the non-distinctness assumption there must be some $0 \neq z \in \gamma \cap \widetilde \gamma$ beyond which the two geodesics are the same. But by a.s. uniqueness of finite geodesics, the subpaths of both $\gamma$ and $\widetilde \gamma$ from $0$ to $z$ must be identical, so these two unigeodesics must be exactly the same path.

The translation-invariance of the model gives that we can apply Proposition \ref{prop:onegeo} to find a unigeodesic $\Gamma(x)$ from each $x$ in $\Z^d$. Since $\cN = 1$, Proposition \ref{prop:distinct} gives us that each $\Gamma(x)$ must coalesce with $\gamma$. Let us define $B = B_\gamma$, where $\gamma$ is as in the preceding paragraph. The proof of Proposition \ref{prop:onegeo} in fact shows that every subsequential limit of finite geodesics from any initial vertex $x$ must produce $\Gamma(x)$, which coalesces with $\gamma$ --- so $B$ is in some sense a quite explicit function of the edge weights, and it is not hard to see that this implies the measurability of $B$. The property (1) of Lemma \ref{lem:buseprops} and the integrability of $T$ gives that $B(x,y)$ has finite first moment for all $x,\,y$.

In fact, $B$ is translation-covariant, in the following sense. Writing explicitly the dependence of each object on the configuration $\omega$, our assumption that $\cN = 1$ gives that, almost surely, $\gamma[\theta_z \omega] = \Gamma(z)[\omega] - z$. This implies the translation-covariance of $B$: $B(x,y)[\theta_z \omega] = B(z + x, z + y)[\omega]$. We also note that reflections and rotations which fix $0$ and leave $\Z^d$ invariant rotate / reflect $\gamma$, but that $B$ must be invariant in distribution under these operations (by the invariance of the model and the construction of $B$). So $\bfE B(x,y) = 0$ for all $x$ and $y$.

Fix $0 \neq x \in \Z^d$. Using additivity (property (2) of Lemma \ref{lem:buseprops}), shift-covariance, and the ergodic theorem, we have for almost every $\omega$:
\begin{equation}
  \label{eq:hoferg}
  \begin{split}
  \frac{B(0,nx)[\omega]}{n} 
  &= \frac{1}{n}\sum_{i=1}^n B((i-1)x, ix)[\omega]\\
  &= \frac{1}{n}\sum_{i=1}^n B(0, x)[\theta_x^{i-1} \omega] \stackrel{n \rightarrow \infty}{\longrightarrow} \bfE B(0,x) = 0\ .
  \end{split}
\end{equation}
Thus, $B$ grows sublinearly in each direction with probability one. We claim a stronger statement: analogous to the usual 
\index{shape theorem}%
shape theorem, we can make this sublinearity hold simultaneously in all directions with probability one:
\begin{equation}
  \label{eq:hoferg2}
  \prob\left(\lim_{|x| \rightarrow \infty} \frac{|B(0,x)|}{|x|} =  \limsup_{|x| \rightarrow \infty} \frac{|B(0,x)|}{|x|} = 0 \right) = 1\ .
\end{equation}

The proof of \eqref{eq:hoferg2} from \eqref{eq:hoferg} is ({\it mutatis mutandis}) identical to the proof of the shape theorem (see Theorem 3.1 of \cite{ch:Damron}), so we only recall the main idea here. Fix some configuration $\omega$ such that \eqref{eq:hoferg} holds for each $x \in \Z^d$, and assume for the sake of contradiction that there were some sequence $z_k$ along which $|B(0, z_k)| > \varepsilon |z_k|$ uniformly. Without loss of generality, we may assume that $z_k / |z_k| \rightarrow z$ for some $z$. We can find some $x \in \Z^d$ with $x/|x|$ close to $z$. For each $k$ large, let $n_k$ minimize $|n_k x - z_k|$; by careful choice of $x$, we have $|n_k x - z_k| \ll \min\{n_k |x|, \, |z_k|\}$. In particular, since $|B(0, n_k x) - B(0, z_k)| \leq T(n_k x, z_k)$ (and since this passage time is order $|n_k x - z_k|$), we see that $|B(0, n_k x)|$ is large relative to $n_k |x|$. This is in contradiction to the fact that $B(0, n x) / n|x| \rightarrow 0$ on $\omega$.

On the other hand, we can see that sublinearity of $B$ is absurd by considering property (4) of Lemma \ref{lem:buseprops}. Letting $\gamma = (y_1, y_2, \ldots),$ we have $B(y_i, y_j) = T(y_i, y_j)$ for $i < j$. For $j$ large, we have by the shape theorem that $T(0, y_j) \approx g(y_j)$.
To be precise, fix $\delta > 0$ such that $g(z) > \delta$ for all $z$ with $|z| = 1$. This is possible by the boundedness of the limit shape --- i.e., the fact that passage times asymptotically grow linearly in the Euclidean distance. The shape theorem then implies that with probability one,
\[\limsup_{j \rightarrow \infty} \frac{B(0, y_j)}{|y_j|} = \limsup_{j \rightarrow \infty} \frac{T(0, y_j)}{|y_j|} \geq \delta > 0\ . \]
This is in contradiction to \eqref{eq:hoferg2}, showing that our assumption that $\prob(\cN = 1) = 1$ is false.
\end{proof}

\section{Directedness and Busemann functions}
There is a clear similarity between the Busemann function $B_\gamma$ and the object $B_{ne_1}$ considered in the proof of Garet-Marchand. We will take some time to develop this idea here under a strong assumption (different from (Curve) and (Expm)). We will not push these ideas as far as we could, since we will in the next section give a framework for getting around these sorts of strong assumptions.

Suppose we wish to avoid assuming (Curve), but still believe the results of Theorem \ref{thm:newman} and (for at least some dimensions $d$) Theorem \ref{thm:lnunique} should hold. With this as our guidepost, it seems perhaps reasonable to replace assumption (Curve) with the following:
\begin{assumption}[LimG]\label{asn:limg}
For each $x \in \Z^d$, the geodesics $G(x, n e_1)$ have a limit $\Gamma(x)$. Moreover, these geodesics all coalesce.
\end{assumption}
One immediate consequence of Assumption (LimG) is that the Busemann function $B(x,y):= \lim_n [T(x, n e_1) - T(y, ne_1)]$ exists, and is covariant with respect to translations by $e_1$, similarly to the Busemann function in Hoffman's argument. If we continue taking Theorem \ref{thm:newman} as a goal, one could be led to believe that $\Gamma(x)$ should be directed: $\Theta(\Gamma(x)) = \{e_1\}$, or at least that $\Theta(\Gamma(x)) \neq \bbS^{d-1}$.

We will not try to prove anything as strong as directedness, but instead just the following much weaker claim. In what follows, let $S_\delta$ denote the sector of aperture $\delta$ around $- e_1$:
\[S_\delta = \{z \in \bbS^{d-1}: |z + e_1| < \delta\}\ . \]
\begin{theorem}\label{thm:buseas}
  Assume {\rm(LimG)} {\rm(}along with the standard assumptions of Assumption \ref{asn:std}{\rm)}. Then there is some $\delta$ such that $\Theta(\Gamma(x)) \cap S_\delta = \varnothing$ almost surely.
\end{theorem}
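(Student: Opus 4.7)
The plan is to use (LimG) to construct a Busemann function $B(x,y) := \lim_n [T(x, ne_1) - T(y, ne_1)]$ and exploit its almost-linear structure in the $e_1$ direction to locate the limit directions of $\Gamma(0)$. First I would verify that this limit exists almost surely: under (LimG) the geodesics $\Gamma(x)$ and $\Gamma(y)$ coalesce at some random vertex $w$, and for all sufficiently large $n$ both finite geodesics $G(x, ne_1)$ and $G(y, ne_1)$ pass through $w$; hence $T(x, ne_1) - T(y, ne_1) = T(x, w) - T(y, w)$ is eventually constant. The resulting $B$ is additive, satisfies $|B(x,y)| \leq T(x,y)$, is $e_1$-shift covariant in the sense that $B(x,y)[\theta_{e_1}\omega] = B(x+e_1, y+e_1)[\omega]$, and by an argument analogous to Lemma~\ref{lem:buseprops}(4) satisfies $B(0,x) = T(0,x)$ whenever $x \in \Gamma(0)$.

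The heart of the proof is the identity $\bfE B(0, e_1) = g(e_1)$, which I would obtain by a telescoping trick in the spirit of the Garet--Marchand argument used in Theorem~\ref{thm:coex}. Setting $B_{K e_1}(0, e_1) := T(0, Ke_1) - T(e_1, Ke_1)$, the equality $\bfE T(e_1, Ke_1) = \bfE T(0, (K-1)e_1)$ gives the exact identity
\begin{equation*}
  \sum_{K=1}^n \bfE B_{Ke_1}(0, e_1) = \bfE T(0, ne_1),
\end{equation*}
and after dividing by $n$ the right-hand side tends to $g(e_1)$ via the $L^1$ shape theorem. On the left, (LimG) implies $B_{Ke_1}(0, e_1) \to B(0, e_1)$ pointwise, and the bound $|B_{Ke_1}(0, e_1)| \leq t_{\{0, e_1\}}$ lets dominated convergence pass the limit through expectation, so the Ces\`aro average also converges to $\bfE B(0, e_1)$. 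Applying the ergodic theorem to the stationary sequence $(B(ke_1, (k+1)e_1))_{k \in \Z}$ then upgrades this to $B(0, ne_1)/n \to g(e_1)$ almost surely as $n \to \pm\infty$.

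For the directedness claim, I would argue by contradiction. Suppose $\Theta(\Gamma(0)) \cap S_\delta \neq \varnothing$ and pick $x_k \in \Gamma(0)$ with $|x_k| \to \infty$ and $x_k/|x_k| \to z^* \in S_\delta$. Write $x_k = a_k e_1 + v_k$ with $v_k \perp e_1$, so that $a_k/|x_k| \to z^* \cdot e_1 \leq -1 + \delta^2/2$ and $|v_k|/|x_k| \leq \delta + o(1)$. Additivity and $e_1$-shift covariance give
\begin{equation*}
  B(0, x_k) = B(0, a_k e_1) + B(0, v_k)[\theta_{a_k e_1}\omega],
\end{equation*}
where the first summand equals $a_k g(e_1) + o(|a_k|)$ by the ergodic convergence above, and the second is bounded in absolute value by $T(a_k e_1, x_k) \leq M|v_k|(1+o(1))$ with $M := \max_{|z|=1} g(z)$. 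On the other hand, $B(0, x_k) = T(0, x_k) \geq (g(z^*) - o(1))|x_k|$ with $g(z^*) \geq g(-e_1) - M\delta = g(e_1) - M\delta$ by Lipschitz continuity of $g$ and the lattice reflection symmetry $g(-e_1) = g(e_1)$. Combining these, dividing by $|x_k|$, and sending $k \to \infty$ produces the inequality $(2 - \delta^2/2)g(e_1) \leq 2 M \delta$, which is impossible once $\delta$ is taken smaller than roughly $g(e_1)/M$. Translation invariance then extends the conclusion from $x=0$ to all $x \in \Z^d$.

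I expect the main obstacle to be the rigorous justification of $\bfE B(0, e_1) = g(e_1)$: the dominated convergence step relies on $\bfE t_e < \infty$ (slightly stronger than Assumption~\ref{asn:std}(1) but natural here), and one must carefully verify that the coalescence consequence of (LimG) makes $B$ both almost-surely well-defined \emph{and} integrable --- this is the step where (LimG) does real work, since a mere subsequential Cauchy property of $T(x, ne_1)-T(y, ne_1)$ would not suffice. Once $B$ is known to grow linearly at rate $g(e_1)$ in the $+e_1$ direction, the contradiction step is a routine combination of ergodic convergence for $B$ and the shape theorem for $T$.
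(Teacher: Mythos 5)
Your overall route is the same as the paper's: build $B(x,y)=\lim_n[T(x,ne_1)-T(y,ne_1)]$ from (LimG), use the Garet--Marchand telescoping average plus dominated convergence to get $\bfE B(0,e_1)=g(e_1)$, upgrade to a.s. linear decay of $B(0,-me_1)/m\to -g(e_1)$ by the ergodic theorem, and then contradict $B(0,x)=T(0,x)\geq 0$ along $\Gamma(0)$ by comparing a geodesic point near direction $-e_1$ with the nearby axis point. Two small remarks on the first half: the shift by $e_1$ is ergodic for i.i.d.\ weights, so the ergodic limit is indeed the constant $\bfE B(0,e_1)$ as you use; and for the dominated convergence step you do not need $\bfE t_e<\infty$ --- dominate by $T(0,e_1)$, which is integrable under Assumption \ref{asn:std}(1) by the standard disjoint-paths argument, so no strengthening of the standing assumptions is needed there.

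The genuine gap is the bound $T(a_k e_1, x_k)\leq M|v_k|(1+o(1))$ with $M=\max_{|z|=1}g(z)$. The shape theorem controls $T(0,y)$ from a \emph{fixed} origin; here both endpoints $a_k e_1$ and $x_k$ are random and tend to infinity, and the a.s.\ thresholds in the shape theorem depend on the starting point, so you cannot simply translate and quote it along the random sequence of pairs. Moreover, the multiplicative form you wrote is false in the regime $|v_k|=o(|x_k|)$ (the shape-theorem error is $o(|x_k|)$, not $o(|v_k|)$); what the argument actually needs is $T(a_k e_1,x_k)\leq M|v_k|+o(|x_k|)$ uniformly, and even that requires a separate ``two-point'' or uniform shape estimate which is not trivial under only the standing moment assumption. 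The paper sidesteps exactly this issue by adding the simplifying assumption $\prob(t_e\leq M)=1$ in its proof, which makes $T(-m_i e_1,z_i)\leq M\|m_ie_1+z_i\|_1$ a deterministic inequality (and also trivializes the domination step). To repair your write-up, either adopt the same boundedness reduction, or state and prove a uniform estimate of the form: a.s., for every $\varepsilon>0$ and all large $k$, $T(me_1,y)\leq g(y-me_1)+\varepsilon|m|$ for all $|m|\leq k$, $|y|\leq Ck$ --- and acknowledge that this is an extra lemma beyond the shape theorem as stated. With that in hand, your contradiction inequality $(2-\delta^2/2)g(e_1)\leq 2M\delta$ goes through and matches the paper's choice of $\delta$ of order $g(e_1)/M$ (up to dimensional constants).
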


The strategy of the theorem's proof is easy to outline given what we have seen. As in the argument of Garet-Marchand, we can use an ``averaging trick'' to show that $\bfE B(0, - m e_1)$ grows linearly as $-m g(e_1)$ for large $m$. In fact, the translation-covariance of $B$ gives that the growth of $B$ occurs almost surely. On the other hand, property 4 of Lemma \ref{lem:buseprops} holds for $B$, giving that $B(0, \cdot)$ behaves as $T(0, \cdot)$ along $\Gamma(0)$. These asymptotics would conflict if $- m e_1$ were on $\Gamma(0)$, and in fact exclude the geodesic coming within some conical region of the axis.
\begin{proof}[Proof of Theorem \ref{thm:buseas}]
  Let us strengthen the assumptions even further to include boundedness: $\prob(t_e \leq M) = 1$ for some finite $M$; we prove the theorem in this setting for $\Gamma(0)$. Assume for the sake of contradiction that the statement of the theorem fails when $\delta = (g(e_1)/16M \sqrt{d})$. Consider an outcome $\omega$ on which $\Theta(\Gamma(0)) \cap S_\delta \neq \varnothing$ (we will also need to assume that $\omega$ lies in various probability one events on which, e.g., $t_e \leq M$ for all $e$; this will become clear in the course of the proof).

The translation-covariance of $B$ gives, as in \eqref{eq:hoferg},
  \begin{equation}\label{eq:newerg}
\lim_{m \rightarrow \infty} B(0, -m e_1)/m = \bfE B(0, - e_1)\quad \text{a.s. and in } L^1\ .\end{equation}

To compute $\bfE B$, let us as before define $B_n(x,y) := T(x, n e_1) - T(y, n e_1)$. Note that the shape theorem (in fact, the subadditivity used to establish the shape theorem) shows that $\bfE T(0, n e_1) / n \rightarrow g(e_1)$. We use the Garet-Marchand averaging trick:
\begin{align*}
  \frac{\bfE T(0, n e_1)}{n} &= \frac{1}{n}\Big([\bfE T(0, n e_1) - \bfE T(0, (n -1 ) e_1)]\\ 
  &\qquad\quad+ [\bfE T(0, (n-1) e_1) - \bfE T(0, (n -2 ) e_1)] + \ldots + \bfE T(0, e_1)\Big)\\
  &= \frac{1}{n}\left(\bfE B_n(0, e_1) + \bfE B_{n-1} (0, e_1) + \ldots + \bfE T(0, e_1) \right)\ .
\end{align*}
Since $B_n(0, e_1) \rightarrow B(0, e_1)$ almost surely and since $|B_n(0, e_1)| \leq T(0, e_1) \leq M$ almost surely, the typical term above converges to $\bfE B(0, e_1)$. On the other hand, the left-hand side converges to $g(e_1)$, so we see
\begin{equation}
  \label{eq:avgfinal}
  \bfE B(0, -e_1) = -\bfE B(0,e_1) = - g(e_1)\ .
\end{equation}

Combining \eqref{eq:avgfinal} with \eqref{eq:newerg}, we see that $B(0, - m e_1) / m \rightarrow  - g(e_1)$ almost surely. We now move toward a contradiction similarly to Hoffman's argument.  Write $\Gamma(0) = (z_1, z_2, \ldots)$. As in Lemma \ref{lem:buseprops}, we have $B(0, z_i) = T(0, z_i)$ for all $i$. In particular, for all $i$,  $B(0, z_i) \geq 0$. Let $m_i = \lfloor |z_i| \rfloor$.
\begin{equation}
\label{eq:lastbd}
\begin{split}
|B(0, z_i) - B(0, -m_i e_1)| 
&= |B(-m_i e_1, z_i)| \leq  T(- m_i e_1, z_i) \\
&\leq M \|m_i e_1 + z_i\|_1 \\ 
&\leq 2 M \sqrt{d}(|z_i|) | (z_i / |z_i|) + e_1|\ ,  
\end{split}
\end{equation}
when $|z_i| > 1$. 

On the other hand, as $i \rightarrow \infty$, we have 
\[ \lim_{i \to \infty} B(0, -m_i e_1)/ |z_i|  = \lim_{i \to \infty} B(0, -m_i e_1)/ m_i  = -g(e_1)\ .\]
If $i$ is sufficiently large, then $|z_i / |z_i| + e_1| \leq 2\delta =  (g(e_1)/8M\sqrt{d})$ and $B(0, -m_i e_1) \leq - |z_i| g(e_1)/2$. Then the  above implies along with \eqref{eq:lastbd} that
\[ B(0, z_i)/|z_i| \leq -g(e_1)/2 + (2M\sqrt{d})(g(e_1)/8M\sqrt{d}) \leq -g(e_1)/4 . \]
This contradicts the fact that $B(0, z_i) \geq 0$.
\end{proof}

\section[Busemann limits and general directedness]{Busemann subsequential limits and general directedness statements}
We have shown that averaging properties of Busemann functions can be used to control directedness properties of infinite geodesics, and have some idea of how to implement this strategy in practice. Unfortunately, without assumption (LimG), we are lacking a Busemann function and corresponding geodesics on which to run this program. It is obviously reasonable to want to try to construct geodesics without making any unproven assumptions. Our main goal in the remainder of the article is to run a more sophisticated version of last section's argument which allows us to circumvent (LimG). 

Fix some $\zeta \in \partial \cB$. In all our work in this section, we replace (LimG) with the following assumption:
\begin{assumption}[Dif]
\index{limit shape}%
$\partial \cB$ is differentiable at $\zeta$.
\end{assumption}
Recall the meaning of this statement is that there is a unique supporting hyperplane $H$ for $\cB$ at $\zeta$; see Section 3.1.3 in \cite{ch:Damron}. 
There is a unique $\rho$ such that $H = \{x: \, x \cdot \rho = 1\}$. Recall that $\cB$ is convex by the shape theorem, so (Dif) is a much weaker assumption than it seems at first glance.

Given a $\zeta$ as in (Dif), we can of course define the ``sector of contact'' of $H$ with $\partial \cB$:
\[S = H  \cap \partial \cB\ . \]
The corresponding set of angles is
\begin{equation}
\label{eq:difang}
\Theta_S:= \{z \in \bbS^{d-1}: \, z/g(z) \in S\}\ . \end{equation}
Our main theorem is that, under (Dif), we can produce a geodesic which is directed in a sector no wider than $\Theta_S$.
\begin{theorem}[\cite{DH14}]
\label{thm:dh}
  Assume the standard assumptions and {\rm(Dif)}. Then with probability one, there is an infinite geodesic $\gamma$ from $0$ which is 
  \index{limiting direction}%
  directed in $\Theta_S$, in the sense that $\Theta(\gamma) \subseteq \Theta_S$.
\end{theorem}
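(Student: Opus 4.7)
My plan is to adapt the approach of Theorem \ref{thm:buseas} by constructing a Busemann-like object as a subsequential weak limit in place of the genuine limit provided by (LimG). Fix an integer sequence $(v_n)$ with $v_n/|v_n| \to \zeta/|\zeta|$, and set $B_n(x,y) := T(x,v_n) - T(y,v_n)$. Since $|B_n(x,y)| \leq T(x,y)$, the joint family $(\omega,(B_n(x,y))_{x,y \in \Z^d},(G(x,v_n))_{x \in \Z^d})$ is tight in an appropriate Polish space. Diagonal extraction, combined with a Ces\`aro average over $\Z^d$-translations before passing to the limit, produces a subsequential limit $(\omega, B, (\Gamma(x))_{x \in \Z^d})$ defined on an enlarged space $(\wt\Omega, \mu)$. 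By design, the $\omega$-marginal of $\mu$ is the original FPP law; $B$ inherits additivity, antisymmetry, and $|B(x,y)| \leq T(x,y)$; each $\Gamma(x)$ is an infinite geodesic from $x$; the finite-level identity $B_n(x_i, x_{i+1}) = t_{\{x_i, x_{i+1}\}}$ (along consecutive vertices of $G(x, v_n)$) passes to the limit, so property (4) of Lemma \ref{lem:buseprops} persists for $(B, \Gamma(y_0))$; and $(\omega, B, \Gamma)$ is jointly $\Z^d$-shift invariant.

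With shift invariance and additivity in hand, the mean $f(z) := \bfE_\mu B(0,z)$ is $\Z$-linear, so $f(z) = \rho' \cdot z$ for some $\rho' \in \R^d$. The next step is to identify $\rho' = \rho$. The bound $B(0,nz) \leq T(0,nz)$ combined with the ergodic theorem applied to the additive decomposition $B(0, nz) = \sum_{k=1}^n B((k-1)z, kz)$ and the shape theorem yields $\rho' \cdot z \leq g(z)$ for every $z$. On the other hand, applying the Garet--Marchand telescoping along direction $\zeta$ --- i.e., choosing an integer approximation $w$ with $w/|w|$ near $\zeta/|\zeta|$, so that $\bfE T(0, nw) = \sum_{K=1}^n \bfE B_{Kw}(0,w)$, and taking the subsequence $(n_k)$ along which the term $\bfE B_{n_k w}(0,w)$ realizes the Ces\`aro-average value $g(w)$ --- forces $\rho' \cdot w = g(w)$, hence $\rho' \cdot \zeta = g(\zeta)$ after approximation. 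These two facts put $\rho'$ in $\partial g(\zeta)$, which under (Dif) is the singleton $\{\rho\}$.

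The multiparameter ergodic theorem then gives $B(0, nz)/n \to \rho \cdot z$ $\mu$-a.s., and the shape-theorem upgrade, exactly parallel to \eqref{eq:hoferg}--\eqref{eq:hoferg2}, produces
\[
\lim_{|x| \to \infty} \frac{B(0, x) - \rho \cdot x}{|x|} = 0 \quad \mu\text{-a.s.}
\]
Writing $\Gamma(0) = (y_0 = 0, y_1, y_2, \ldots)$ and invoking property (4), $B(0, y_i) = T(0, y_i)$, while the classical shape theorem gives $T(0, y_i)/|y_i| - g(y_i/|y_i|) \to 0$. Combining the two asymptotics, any limit point $\theta \in \bbS^{d-1}$ of $y_i/|y_i|$ satisfies $g(\theta) = \rho \cdot \theta$, so $\theta/g(\theta)$ lies on the supporting hyperplane $H$ and hence in the contact set $S = H \cap \partial\cB$. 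By \eqref{eq:difang}, $\theta \in \Theta_S$, so $\Theta(\Gamma(0)) \subseteq \Theta_S$ $\mu$-almost surely. A measurability / projection argument then transfers the existence of such an infinite geodesic from $0$ back to $(\Omega, \prob)$.

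The main obstacle is the construction in the first paragraph: obtaining a subsequential limit that is simultaneously jointly $\Z^d$-shift invariant and that retains the pinning identity $B(x_i, x_{i+1}) = t_{\{x_i, x_{i+1}\}}$ along the limiting geodesic requires careful bookkeeping of Ces\`aro averages of shifted distributions in a Polish-space setting, together with verifying that the limit admits the original FPP law as its $\omega$-marginal. A secondary subtlety is the identification $\rho' = \rho$, which hinges on extracting, within the subsequence used to define $B$, a further sub-subsequence along which the Cesàro-averaged shape-theorem equality $\rho' \cdot \zeta = g(\zeta)$ is realized pointwise --- a maneuver that must be compatible with the joint tightness used to define $\Gamma$.
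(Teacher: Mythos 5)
Your overall architecture mirrors the paper's (enlarge the space, average to restore translation invariance, identify the mean of the limiting Busemann object, prove a shape theorem for it, pin down the direction functional via (Dif), and read off directedness from $B=T$ along the limit geodesic), but there are two genuine gaps. The first is the ergodicity step: the assertion that ``the multiparameter ergodic theorem then gives $B(0,nz)/n \to \rho\cdot z$ $\mu$-a.s.'' is unjustified, because the Ces\`aro-averaged weak limit $\mu$ is translation-invariant but not known to be ergodic. The ergodic theorem only produces an a.s.\ limit $\varpi\cdot z$ where $\varpi$ is a shift-invariant \emph{random} vector with mean $\rho'$; identifying the mean with $\rho$ does not identify the realized functional, and your shape theorem then reads $B(0,x)=\varpi\cdot x+o(|x|)$ with random $\varpi$, which is not enough for the final directedness step. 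The paper devotes Theorem \ref{thm:dhshape} and Lemma \ref{lem:gottadif} precisely to this point: keep the random $\varpi$, observe the \emph{pointwise} a.s.\ inequality $\varpi\cdot x\le g(x)$, and combine $\bfE_\mu\,\varpi\cdot\zeta=\rho\cdot\zeta=1$ with $\varpi\cdot\zeta\le 1$ a.s.\ to force $\varpi\cdot\zeta=1$ a.s., after which (Dif) gives $\varpi=\rho$ a.s. Your second paragraph contains the raw ingredients for this upgrade, but you never make the pointwise argument, and without it the proof does not close.

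The second gap is the identification $\rho'\cdot\zeta=g(\zeta)$, and it is tied to your choice of point targets. The Garet--Marchand telescoping requires the targets to be exact lattice multiples $Kw$ of a fixed $w$, so it can only equip a measure built from the target sequence $(nw)$ with the equality $\rho'\cdot w=g(w)$; your phrase ``hence $\rho'\cdot\zeta=g(\zeta)$ after approximation'' does not make sense for a single fixed $\mu$, since changing $w$ changes the construction. For $\zeta$ not proportional to a lattice vector you obtain, for each rational approximant $w$, a different measure whose mean functional lies in the subdifferential of $g$ at $w/g(w)$ --- a point where (Dif) tells you nothing --- and no measure realizing the equality at $\zeta$ itself. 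This is exactly why \cite{DH14} (and the paper's construction) use hyperplane targets $H_\alpha=\{y:\,y\cdot\rho=\alpha\}$: Lemma \ref{lem:linetime} ($T(0,H_\alpha)/\alpha\to1$) makes the mean computation $\bfE_\mu B(x,y)=\rho\cdot(y-x)$ exact in \emph{every} direction (Theorem \ref{thm:buseasymp}), and the exact shift relation $\mu_\alpha\circ\theta_z^{-1}=\mu_{\alpha-\rho\cdot z}$ (Lemma \ref{lem:mutrans}) lets averaging over the single scalar parameter $\alpha$ produce full $\Z^d$-translation invariance (Lemma \ref{lem:transinv}), sidestepping the delicate spatial Ces\`aro averaging you flag as your ``main obstacle.'' With point-to-point targets neither feature is available, so as written the construction does not deliver a single translation-invariant measure whose Busemann function is pinned at $\zeta$, which is the heart of the theorem.
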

In particular, if $\zeta$ is an exposed point (i.e., if $S = \{\zeta\}$), then we can produce a directed geodesic. The statement of the theorem should be interpreted carefully: the argument we give here does not give a construction of such a $\gamma$ that is measurable with respect to the edge weights. Rather, it shows that \[\prob(\text{there exists an infinite geodesic directed in $\Theta_S$}) = 1\] by extracting an appropriate $(t_e)$-measurable event having probability one from a larger probability space.

In our construction, we build a limiting Busemann function $B$ and corresponding limiting geodesics via a particularly chosen limiting procedure. In our analysis of the asymptotics of this Busemman function, we also make a technical improvement on the methods of the last section. Note that our contradiction there came from the observation $B_\gamma$ grows (at least remaining positive) along its base geodesic $\gamma$ and becomes negative at a linear rate along the $-e_1$ axis. In fact, we can sharpen these observations by considering the growth of $B$ in a global sense. The main goal here will be to provide a version of \eqref{eq:hoferg2} which gives a ``shape theorem'' for a particular Busemman function which characterizes completely the linear-order growth.

We note here that the presentation of Theorem \ref{thm:dh} and its proof are influenced by the versions appearing in \cite{ADH16}; in particular, unlike the original paper \cite{DH14}, we make clear the generalization to $d > 2$.

\subsection{Construction of geodesics, Busemann functions}

Our construction benefits greatly from considering point-to-set geodesics. For a subset $S \subseteq \bbR^d$ and $x \in \Z^d$, let $T(x, S) = \inf_{y \in S} T(x,y)$. Recall the extension of $T$ to points of $\bbR^d$ --- we set $T(x,y) = T(x,[y])$, where $[y]$ is the unique point of $\Z^d$ such that $y \in [y] + [0,1)^d$]. A point-to-set geodesic from $x$ to $S$ is a path $\gamma$ from $x$ to some $[y]$ with $y \in S$, such that $T(\gamma) = T(x,S)$.

For each real number $\alpha$, let $H_\alpha = \{y: \, y \cdot \rho = \alpha\}$. It is not hard to see following the proof of Lemma \ref{lem:unique} that geodesics to $H_\alpha$ exist and are unique; we let $\Gamma_\alpha(x)$ denote the geodesic from $x \in \Z^d$ to $H_\alpha$, and $B_\alpha(x,y) = T(x, H_\alpha) - T(y, H_\alpha)$. We would be very happy if each $\Gamma_\alpha(x)$ and $B_\alpha(x,y)$ had limits as $\alpha \rightarrow \infty$, as we could then work with the limiting objects. Because it is not clear how to show this, we will take a particular sort of subsequential limit instead. Particular desiderata which guide the choice of limiting procedure are that the limits have appropriate translation-invariance properties (so we can do versions of averaging tricks and apply ergodic theorems) and that they preserve the relationship between the limiting analogues of $\Gamma_\alpha$ and $B_\alpha$ (so we can say something about geodesics from Busemann asymptotics).

 We consider the edge weight configuration $\omega = (t_e)_e$ to live on the canonical probability space $\Omega_1 := [0,\, \infty)^{\cE^d}$. We will need to consider an enlarged version of this space. Let $\vec{\cE}^d$ denote the set of 
 {\bf directed} edges (i.e., an ordered pair $(x,y)$ is in $\vec{\cE}^d$ if $\{x,y\} \in \cE^d$).  We let $\widetilde \Omega := \Omega_1 \times \Omega_2 \times \Omega_3$, where $\Omega_2 = \bbR^{\Z^d}$ and $\Omega_3 = \{0, 1\}^{\vec{\cE}^d}\ .$ For each $\alpha$, we will push forward our original measure $\prob$ on $\Omega_1$ to a measure $\mu_\alpha$ on $\widetilde \Omega$.

For $(x,y) = \vec{e} \in \vec{\cE}^d$, let $e = \{x,\,y\}$ be the undirected version of $\vec{e}$ and define the random variable 
\[\eta_\alpha(\vec{e}) :=
\begin{cases}
  1 \quad & \text{if } T(x, H_\alpha) = T(y, H_\alpha) + t_{e} \\
  0 \quad & \text{otherwise.}
\end{cases}
\]
For each $\alpha$ , define the map $\Phi_\alpha: \, \Omega_1 \rightarrow \widetilde \Omega$ as follows: 
\[\Phi_\alpha( \omega) = (\omega, B_\alpha, \eta_\alpha)\ .\]
For each $\alpha$, we define the probability measure $\mu_\alpha$ (on $\widetilde \Omega$ with the Borel sigma-algebra) to be the push-forward of $\prob$ by the map $\Phi_\alpha$.

$\mu_\alpha$ keeps track of the joint distribution of the edge weights, Busemann functions, and edges in geodesics. The geodesics are kept track of by the edge variables $\eta_\alpha$; for instance, we have almost surely that $\eta_\alpha((x,y)) = 1$ if and only if $(x,y)$ is in $\Gamma_\alpha(x)$. Indeed, if $(x,y) \in \Gamma_\alpha(x)$, then $\eta_\alpha((x,y)) = 1$ by definition of a geodesic; conversely, if $\eta_\alpha((x,y)) = 1$, then by concatenating $(x,y)$ with $\Gamma_\alpha(y)$ we produce a path $\gamma$ satisfying $T(\gamma) = T(x, H_\alpha)$ (which must be the unique geodesic).

Recall the translation operators from Definition \ref{defin:trans}. We extend them to $\widetilde \Omega$ as follows. For $z \in \Z^d$, consider a typical point $((t_e), (B(x,y)), (\eta_\alpha(\vec e)))$ of $\widetilde \Omega$; then
\[\theta_z((t_e), (B(x,y)), (\eta(\vec e))) = ((t_{e-z}), (B(x-z,y-z)), (\eta(\vec e - z))) \ .\]
\begin{lemma}\label{lem:mutrans}
  If $z \in \Z^d$ and $A \subseteq \widetilde \Omega$ is an event, then $\mu_{\alpha} \circ \theta_z^{-1} = \mu_{\alpha - \rho \cdot z}$.
\end{lemma}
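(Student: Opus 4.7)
The plan is to reduce the lemma to a change of variables on the base space $\Omega_1$, exploiting the translation invariance of the i.i.d.\ edge weights. Since $\mu_\alpha = \prob \circ \Phi_\alpha^{-1}$, showing $\mu_\alpha \circ \theta_z^{-1} = \mu_{\alpha - \rho \cdot z}$ amounts to identifying the distributions of $\theta_z \circ \Phi_\alpha$ and $\Phi_{\alpha - \rho \cdot z}$ as $\widetilde\Omega$-valued random variables on $(\Omega_1, \prob)$.

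First I would introduce the shift $\tau_z$ on $\Omega_1$ by $(\tau_z \omega)_e = \omega_{e-z}$. Translation invariance of the i.i.d.\ edge weights then gives $\prob \circ \tau_z^{-1} = \prob$. The main step is to establish the pointwise intertwining identity
\[
\Phi_\alpha \circ \tau_z \;=\; \theta_z \circ \Phi_{\alpha - \rho \cdot z}
\]
on $\Omega_1$, verified coordinate by coordinate on $\widetilde\Omega = \Omega_1 \times \Omega_2 \times \Omega_3$. The first coordinate matches by the very definition of $\tau_z$. For the Busemann coordinate, the key input is the hyperplane identity $H_\alpha - z = H_{\alpha - \rho \cdot z}$, which follows from $(y-z)\cdot \rho = y\cdot \rho - z\cdot \rho$. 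Combined with the elementary computation $T(x,y;\tau_z\omega) = T(x-z, y-z;\omega)$ (any path in $\tau_z\omega$ has the same weight as its $(-z)$-translate in $\omega$), this yields $T(x, H_\alpha; \tau_z\omega) = T(x-z, H_{\alpha - \rho \cdot z}; \omega)$ and hence
\[
B_\alpha(x, y; \tau_z \omega) \;=\; B_{\alpha - \rho \cdot z}(x - z, y - z; \omega),
\]
which is exactly the prescription for the action of $\theta_z$ on the $B$-coordinate. Since $\eta_\alpha$ is defined in terms of the same passage times, the identical argument gives $\eta_\alpha(\vec e; \tau_z \omega) = \eta_{\alpha - \rho \cdot z}(\vec e - z; \omega)$, matching the action of $\theta_z$ on the third coordinate.

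Finally, for any measurable $A \subseteq \widetilde\Omega$, the intertwining identity together with the $\tau_z$-invariance of $\prob$ yields
\[
(\mu_{\alpha - \rho \cdot z} \circ \theta_z^{-1})(A) = \prob\bigl((\theta_z \circ \Phi_{\alpha-\rho \cdot z})^{-1}(A)\bigr) = \prob\bigl((\Phi_\alpha \circ \tau_z)^{-1}(A)\bigr) = \prob(\Phi_\alpha^{-1}(A)) = \mu_\alpha(A).
\]
Applying this with $z$ replaced by $-z$, and using that $\theta_{-z} = \theta_z^{-1}$ is a bijection on $\widetilde\Omega$, gives the lemma in the stated form. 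The only real obstacle is bookkeeping: one must choose the sign conventions in the definitions of $\theta_z$ on $\Omega_1$ and on $\widetilde\Omega$ together with the direction of $\tau_z$ so that all three coordinates of the intertwining identity simultaneously line up; once the hyperplane translation formula $H_\alpha - z = H_{\alpha - \rho\cdot z}$ is in hand, everything else is routine.
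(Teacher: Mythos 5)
Your approach is the right one, and in substance it is the same as the paper's: the only ingredients are translation invariance of $\prob$ and the hyperplane identity $H_\alpha + z = H_{\alpha + \rho\cdot z}$. The paper merely spot-checks the identity on events of the form $\{\eta((x,y))=1\}$, while you package the same two facts into an intertwining relation between $\Phi_\alpha$, a shift $\tau_z$ on $\Omega_1$, and $\theta_z$ on $\widetilde\Omega$, which handles all events (and the measurability bookkeeping) at once. That is a legitimate, slightly more complete write-up of the same proof.

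There is, however, a genuine slip in the last step, and it is not cured by the $z\mapsto -z$ substitution you invoke. With the conventions you fixed --- $(\tau_z\omega)_e=\omega_{e-z}$ and $\theta_z$ acting on the coordinates of $\widetilde\Omega$ by shifting arguments by $-z$, as in the paper's displayed extension formula --- your intertwining $\Phi_\alpha\circ\tau_z=\theta_z\circ\Phi_{\alpha-\rho\cdot z}$ is correct, but the displayed push-forward computation then says $\mu_{\alpha-\rho\cdot z}\circ\theta_z^{-1}=\mu_\alpha$, i.e.\ $\mu_\beta\circ\theta_z^{-1}=\mu_{\beta+\rho\cdot z}$ for all $\beta$ and $z$: the opposite sign to the lemma. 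Replacing $z$ by $-z$ and using $\theta_{-z}=\theta_z^{-1}$ only yields $\mu_\beta\circ\theta_z=\mu_{\beta-\rho\cdot z}$, which is the same identity rewritten, not the stated $\mu_\beta\circ\theta_z^{-1}=\mu_{\beta-\rho\cdot z}$; so the final sentence is a non sequitur. The real source of the mismatch is that the displayed extension of $\theta_z$ to $\widetilde\Omega$ is opposite in sign to Definition \ref{defin:trans}, and it is the Definition \ref{defin:trans} convention (arguments shifted by $+z$) that the paper actually uses both in its proof of this lemma (where $\theta_z^{-1}\{\eta((x,y))=1\}=\{\eta((x+z,y+z))=1\}$) and in Lemma \ref{lem:transinv}. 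Adopt that convention on every coordinate, i.e.\ take $(\tau_z\omega)_e=\omega_{e+z}$ and $\theta_z$ shifting arguments by $+z$; then $T(x,H_\alpha;\tau_z\omega)=T(x+z,H_{\alpha+\rho\cdot z};\omega)$, the intertwining becomes $\Phi_\alpha\circ\tau_z=\theta_z\circ\Phi_{\alpha+\rho\cdot z}$, and your push-forward computation gives $\mu_\alpha\circ\theta_z^{-1}=\mu_{\alpha-\rho\cdot z}$ exactly as stated, with no need for any sign-flipping maneuver at the end.
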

\begin{proof}
  Let us just demonstrate via the event $\{\eta((x,y)) = 1\}$. We have
\begin{align*}
&\mu_\alpha \circ \theta_z^{-1}(\eta((x,y)) = 1) \\
&\qquad= \mu_\alpha( \eta((x+z,y+z)) = 1)= \prob((x+z,y+z) \in \Gamma(x+z, H_\alpha))\\
&\qquad= \prob((x,y) \in \Gamma(x, H_{\alpha - z \cdot \rho})) =  \mu_{\alpha - \rho \cdot z}(\eta((x,y)) = 1)\ . \end{align*}
Here we have used the invariance of $\prob$ under shifts, and the fact that $H_\alpha + z = H_{\alpha + \rho \cdot z}$.
\end{proof}

The following properties of $B_\alpha$ are proved via very similar arguments to those of Lemma \ref{lem:buseprops}, so we omit their proofs.
\begin{lemma}\label{lem:busenew}
  The following hold $\prob$-a.s.  for each $x,y,z \in \Z^d$:
  \begin{enumerate}
    \item[{\rm(1)}] $|B_\alpha(x,y)| \leq T(x,y)$;
  \item[{\rm(2)}] $B_\alpha(x,z) = B_\alpha(x, y) + B_\alpha(y,z)$;
    \item[{\rm(3)}] $B_\alpha (y,x) = -B_\alpha(x,y)$;
      \item[{\rm(4)}] If $y \in \Gamma_\alpha(x)$, then $B_\alpha(x,y) = T(x,y)$.
  \end{enumerate}
In particular, if we replace $B_\alpha$ and $\eta_\alpha$ by typical points $B$ and $\eta$ of $\Omega_2$ and $\Omega_3$, the analogues of the above hold $\mu_\alpha$-a.s.
\end{lemma}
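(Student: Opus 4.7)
The plan is to mirror the proof of Lemma~\ref{lem:buseprops}, with substantial simplifications arising because $B_\alpha(x,y) = T(x,H_\alpha) - T(y,H_\alpha)$ is an exact difference of point-to-set passage times rather than a limit along an infinite geodesic. Properties (1)--(3) should follow directly from subadditivity and elementary algebra, while (4) requires only a short appeal to the standard subpath-of-a-geodesic principle, now extended to the point-to-set setting.

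For (1), I would use the subadditive bound $T(x,H_\alpha) \leq T(x,y) + T(y,H_\alpha)$, which follows by concatenating $G(x,y)$ with any near-optimal path from $y$ to $H_\alpha$ and taking the infimum over the latter. Rearranging gives $B_\alpha(x,y) \leq T(x,y)$, and swapping the roles of $x$ and $y$ gives the matching lower bound. Properties (2) and (3) are purely algebraic: inserting $\pm T(y,H_\alpha)$ into the definition of $B_\alpha(x,z)$ yields additivity, and negating gives antisymmetry, both holding deterministically for every configuration. For (4), I would work on $\Omega_u$ together with the probability-one event on which $\Gamma_\alpha(x)$ is uniquely defined; this latter event follows from an essentially identical argument to Lemma~\ref{lem:unique}, noting that $T(x,z)\to\infty$ confines the minimizer on $H_\alpha\cap\Z^d$ to a bounded set, after which continuity of $F$ handles uniqueness. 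If $y \in \Gamma_\alpha(x)$, then writing $\Gamma_\alpha(x)$ as the concatenation of its initial segment from $x$ to $y$ (which is forced to be $G(x,y)$ by the subpath principle) with its terminal segment from $y$ to the terminal vertex on $H_\alpha$ (which must realize $T(y,H_\alpha)$, since replacing it by a strictly shorter path would contradict the optimality of $\Gamma_\alpha(x)$), one obtains $T(x,H_\alpha) = T(x,y) + T(y,H_\alpha)$, i.e., $B_\alpha(x,y) = T(x,y)$.

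The transfer to the $\mu_\alpha$-a.s. versions is immediate from the pushforward structure. Writing a typical point of $\widetilde\Omega$ as $(\omega, B, \eta)$, the full-measure $\prob$-events from the previous paragraph are mapped by $\Phi_\alpha$ into $\widetilde\Omega$, and on those images the coordinate $B$ coincides with $B_\alpha(\omega)$ by definition of $\Phi_\alpha$. Hence the coordinate-wise analogues of (1)--(4) hold on sets whose preimages under $\Phi_\alpha$ have $\prob$-measure one, which are therefore $\mu_\alpha$-full by the definition $\mu_\alpha = \prob \circ \Phi_\alpha^{-1}$. I expect no genuine obstacle here; the only mild subtlety is verifying uniqueness of the point-to-set geodesic, and this is a routine adaptation of a proof already given in the excerpt.
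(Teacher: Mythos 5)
Your proof is correct and matches the route the paper intends: it omits the proof of this lemma precisely because it follows by the same (indeed simpler, since $B_\alpha$ is an exact difference rather than a limit) arguments as Lemma~\ref{lem:buseprops}, namely subadditivity of $T(\cdot,H_\alpha)$ for (1), algebra for (2)--(3), the subpath principle plus existence/uniqueness of point-to-set geodesics for (4), and the pushforward definition of $\mu_\alpha$ for the final transfer. The only cosmetic imprecision is writing $H_\alpha\cap\Z^d$ for the relevant set of lattice endpoints, which should be $\{[y]:y\in H_\alpha\}$ as in the paper's definition of point-to-set geodesics; this does not affect the argument.
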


When we consider a limit of $\mu_\alpha$, since we do not know that $\Gamma_\alpha$'s converge, there is no clear way to reconstruct an infinite geodesic corresponding to ``the $\alpha = \infty$ version of $\Gamma_\alpha$''. We wish to use the $\eta$ variables above to read off an infinite geodesic from a configuration on $\widetilde \Omega$. We will be greatly helped here by the following graphical construction.

\begin{definition}\label{defin:graph}
  For each $\eta \in \Omega_3$, define a directed graph $\mathbb{G}$ with vertex set $\Z^d$ and directed edge set as follows: $(x,y)$ is an edge of $\mathbb{G}$ if and only if $\eta((x,y)) = 1$. When the configuration $\eta$ is understood, we write $x \rightarrow y$ if there is a directed path in $\mathbb{G}(\eta)$ from $x$ to $y$.
\end{definition}
\begin{lemma}\label{lem:dirprops}The following hold for $\mu_\alpha$-a.e. configuration of $\widetilde \Omega$.
  \begin{enumerate}
  \item[{\rm(1)}] For each $x$, there is a directed path (possibly equal to $(x)$) from $x$ to $H_\alpha$.
    \item[{\rm(2)}] If $\gamma$ is a directed path in $\mathbb{G}$, then $\gamma$ is a geodesic.
      \item[{\rm(3)}] If $x \rightarrow y$, then $B(x,y) = T(x,y)$.
  \end{enumerate}
\end{lemma}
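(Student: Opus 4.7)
The plan is to work under the reduction that $\mu_\alpha = \prob \circ \Phi_\alpha^{-1}$, so $\mu_\alpha$-a.s. the triple on $\widetilde\Omega$ takes the form $((t_e), B_\alpha, \eta_\alpha)$. Consequently, every claim in the lemma reduces to a corresponding statement about $B_\alpha$ and $\eta_\alpha$ on the original probability space $\Omega_1$, which I can address directly by working conditionally on $\Omega_u$ (so that finite geodesics to $H_\alpha$ exist and are unique, as in Lemma \ref{lem:unique}).

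For (1), I would construct the desired directed path by following the point-to-set geodesic itself. Given $x \notin H_\alpha$, let $(x, y)$ be the first edge of $\Gamma_\alpha(x)$. Because subpaths of geodesics are geodesics, the tail of $\Gamma_\alpha(x)$ from $y$ to $H_\alpha$ is $\Gamma_\alpha(y)$, hence $T(x, H_\alpha) = t_{\{x,y\}} + T(y, H_\alpha)$. By definition of $\eta_\alpha$, this gives $\eta_\alpha((x,y)) = 1$, i.e.\ $(x,y) \in \mathbb{G}$. Iterating the argument starting now at $y$ produces a directed path in $\mathbb{G}$ from $x$ to $H_\alpha$ (it must terminate since $\Gamma_\alpha(x)$ is finite).

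For (2) and (3), the main tool is a telescoping identity. If $\gamma = (x_0, x_1, \ldots, x_n)$ is a directed path in $\mathbb{G}$, the definition of $\eta_\alpha$ yields $t_{\{x_{i-1},x_i\}} = T(x_{i-1}, H_\alpha) - T(x_i, H_\alpha)$ for each $i$, so summing gives
\[
T(\gamma) = \sum_{i=1}^n t_{\{x_{i-1},x_i\}} = T(x_0, H_\alpha) - T(x_n, H_\alpha) = B_\alpha(x_0, x_n).
\]
By the triangle inequality, $T(x_0, H_\alpha) - T(x_n, H_\alpha) \leq T(x_0, x_n)$, so $T(\gamma) \leq T(x_0, x_n)$; since the reverse inequality is trivial ($\gamma$ is a path from $x_0$ to $x_n$), we get $T(\gamma) = T(x_0, x_n)$, proving (2). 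Statement (3) is then an immediate corollary: if $x \to y$, apply the identity to any directed path realizing the relation to obtain $B_\alpha(x,y) = T(x,y)$, which under $\mu_\alpha$ reads $B(x,y) = T(x,y)$.

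There is no substantive obstacle here; the delicacy is purely bookkeeping, namely being careful that the distributional identification $\mu_\alpha$-a.s.\ lets one freely interchange the generic coordinates $(B, \eta)$ on $\widetilde \Omega$ with the honest $(B_\alpha, \eta_\alpha)$. Once this is set up, all three properties follow from the single observation that $\eta_\alpha = 1$ along an edge is exactly the statement ``this edge lies on some point-to-set geodesic to $H_\alpha$,'' which makes directed connectivity in $\mathbb{G}$ coincide with geodesic concatenation.
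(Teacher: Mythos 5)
Your proof is correct. Item (1) is the same observation as in the paper (every directed edge of $\Gamma_\alpha(x)$ has $\eta_\alpha = 1$), but for items (2) and (3) you take a genuinely different and in fact more elementary route. The paper proves (2) by induction on the edges of the directed path, using almost-sure uniqueness of point-to-set geodesics to show that any directed path started at $x$ must be an initial segment of $\Gamma_\alpha(x)$, and then deduces (3) from the analogue of property (4) of Lemma \ref{lem:busenew}. You instead note that $\eta_\alpha((x_{i-1},x_i))=1$ means exactly $t_{\{x_{i-1},x_i\}} = T(x_{i-1},H_\alpha) - T(x_i,H_\alpha)$, so summing along the path telescopes to $T(\gamma) = T(x_0,H_\alpha)-T(x_n,H_\alpha) = B_\alpha(x_0,x_n)$; combined with the triangle inequality $B_\alpha(x_0,x_n)\le T(x_0,x_n)$ and the trivial bound $T(\gamma)\ge T(x_0,x_n)$, this yields (2) and (3) in one stroke. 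What your argument buys: it needs no appeal to uniqueness of geodesics (it is purely deterministic given the definition of $\eta_\alpha$ and finiteness of the point-to-set passage times), and it produces (3) without invoking Lemma \ref{lem:busenew}(4). What it does not give is the stronger structural fact delivered by the paper's induction, namely that every directed path out of $x$ coincides with an initial segment of the unique geodesic $\Gamma_\alpha(x)$; that extra information is not needed for the statement of the lemma, so your proof fully suffices. Your remark about the bookkeeping identification of the generic coordinates $(B,\eta)$ with $(B_\alpha,\eta_\alpha)$ under the pushforward is exactly the same (implicit) reduction the paper makes.
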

\begin{proof}
  Item (1) is clear by considering the fact that the edges $\vec {e}$ of $\Gamma_\alpha(x)$ have $\eta_\alpha(\vec{e}) = 1.$ Item (3) follows from the definition of $B_\alpha$ and property (4) of Lemma \ref{lem:busenew}.

Consider an outcome such that geodesics exist and are unique. To prove property (2), let $\gamma$ start at $x$ and traverse the directed edges $\vec{e}_1\, \ldots, \vec{e}_n$ in order. Let $K \leq n$ be the maximal index such that the path $\gamma_K$ which traverses (in order) $\vec{e}_1\, \, \ldots,\, \vec{e}_K$ is a subpath of $\Gamma_\alpha(x)$. We will show $K = n$.

By the observation that $\eta_{\alpha}((y,z)) = 1$ if and only if $(y,z) \in \Gamma_\alpha(y),$ we see that $\vec{e}_1$ is the first vertex of $\Gamma_\alpha(x)$ and so $K \geq 1$. We now show that if $K < n$, then $\gamma_{K+1}$ is also a subpath of $\Gamma_\alpha(x)$. Note that there is some path $\gamma'$ which extends $\gamma_{K}$ to a geodesic from $x$ to $H_\alpha$. Letting $\vec{e}_K = (y,z)$, the subpath of $\gamma'$ from $z$ to $H_\alpha$ must be the unique geodesic from $z$ to $H_\alpha$. On the other hand, since $\vec{e}_{K+1}$ is in $\mathbb{G}$ (and is thus in $\Gamma_\alpha(Z)$, we have that the edge of $\gamma'$ following $\vec{e}_K$ must be $\vec{e}_{K+1}$.
\end{proof}

We average the $\mu_\alpha$'s to produce a new measure $\mu_n^*$ on $\widetilde \Omega$: for $n = 1, 2,\ldots$, set
\[\mu_n^* = \frac{1}{n} \int_{0}^n \mu_\alpha\  d\alpha\ . \]
There is a technical argument required for this definition: namely, we need to show that for each measurable $A$, the map $\alpha \mapsto \mu_\alpha(A)$ is measurable (so the integral above makes sense).  We refer the interested reader to Appendix~A of \cite{DH14}.
\begin{lemma}\label{lem:subseq}
  There is a subsequence $(n_k)$ and a measure $\mu$ on $\widetilde \Omega$ such that
  \[\lim_{k \rightarrow \infty} \mu_{n_k}^* = \mu \quad \text{(weakly).} \]
\end{lemma}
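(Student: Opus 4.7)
The plan is to establish tightness of the sequence $\{\mu_n^*\}_n$ on the Polish space $\widetilde \Omega$ and then invoke Prokhorov's theorem to extract a weakly convergent subsequence. The space $\widetilde \Omega = \Omega_1 \times \Omega_2 \times \Omega_3$ is a countable product of Polish factors (including a compact $\Omega_3$). For such a countable product, tightness of a sequence of probability measures reduces to coordinate-wise marginal tightness: given $\varepsilon > 0$, for each coordinate (enumerated by $j$) one picks a compact set $K_j$ with $\mu_n^*(K_j^c) < \varepsilon 2^{-j}$ uniformly in $n$, and then $\prod_j K_j$ is compact by Tychonoff with complementary mass at most $\varepsilon$.

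The three marginals are handled in turn. For the first coordinate, $\Phi_\alpha$ acts as the identity on $\Omega_1$, so the $\Omega_1$-marginal of each $\mu_\alpha$ (and hence of $\mu_n^*$) is $\prob$, a single probability measure and automatically tight. For the third coordinate, $\Omega_3 = \{0,1\}^{\vec{\cE}^d}$ is compact by Tychonoff, so every probability measure on $\Omega_3$ is tight. The only coordinates requiring real work are those of $\Omega_2$; here Lemma \ref{lem:busenew}(1) does the job, since under $\mu_\alpha$ each coordinate (a value of $B$ at some pair of sites) is bounded in absolute value by the corresponding passage time $T(x,y)$ almost surely. As the law of $T(x,y)$ under $\prob$ does not depend on $\alpha$, averaging over $\alpha \in [0,n]$ preserves the domination, and Markov's inequality applied to this single integrable random variable produces the compact interval in $\mathbb{R}$ certifying tightness of the $\mu_n^*$-marginal for each coordinate.

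Combining the three marginal tightness statements along a countable enumeration of coordinates yields tightness of $\{\mu_n^*\}_n$, and Prokhorov's theorem furnishes the desired subsequence $(n_k)$ and limit $\mu$. The only genuine probabilistic input to the present lemma is the Busemann bound $|B_\alpha| \leq T$; the rest is soft Prokhorov compactness on a countable product. The sole real obstacle — already flagged in the paper's discussion — is not in this compactness argument itself but in the prerequisite measurability of $\alpha \mapsto \mu_\alpha(A)$, without which the average $\mu_n^*$ is not even well defined. That issue is technical and has been deferred to Appendix~A of \cite{DH14}.
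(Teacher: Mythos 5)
Your proposal is correct and follows essentially the same route as the paper: tightness of $(\mu_n^*)$ on the product space via the $\prob$-marginal on $\Omega_1$, compactness of $\Omega_3$, and the bound $|B_\alpha(x,y)|\leq T(x,y)$ from Lemma \ref{lem:busenew}(1) for the $\Omega_2$ coordinates, followed by Prokhorov's theorem; you have simply written out the coordinate-wise details that the paper's sketch leaves implicit.
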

\begin{proof}
  The distribution $\prob$ on $\Omega_1$ is easily seen to be tight, in the usual sense that for each $\varepsilon > 0$, we can find a compact measurable $K_\varepsilon \subseteq \Omega_1$ such that $\prob(\Omega_1 \setminus K_\varepsilon) < \varepsilon$. By property (1) of Lemma \ref{lem:busenew}, we see that the sequence $(\mu_n^*)$ is also tight. Prokhorov's theorem now gives the existence of a subsequential weak limit.
\end{proof}

Choose some $\mu$ as in the statement of Lemma \ref{lem:subseq}. This will be the object we use to construct the geodesic of Theorem \ref{thm:dh}. The bulk of this construction will be done in the next subsection, by analyzing the graphs $\mathbb{G}$ and Busemann functions $B$ sampled from $\mu$. For now, we just give the main reason for using the averaging procedure which constructed $\mu$ (and not, say, choosing $\mu$ just as a limit of $\mu_\alpha$).

\begin{lemma}\label{lem:transinv}
  $\mu$ is translation-invariant: for any $z \in \Z^d$ and any event $A \subseteq \widetilde \Omega$,
  \[\mu\circ \theta_z^{-1} (A) = \mu^*(A)\ . \]
\end{lemma}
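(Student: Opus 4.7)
The plan is to leverage Lemma \ref{lem:mutrans}, which states that the translation $\theta_z$ maps $\mu_\alpha$ to $\mu_{\alpha - \rho \cdot z}$. Averaging $\alpha$ over $[0,n]$ should nearly eliminate the shift in $\alpha$, with an error of order $1/n$ that vanishes in the weak limit. (I assume the right-hand side of the stated identity is $\mu(A)$, not $\mu^*(A)$.)

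First, I would test $\mu_n^* \circ \theta_z^{-1}$ against a bounded continuous function $f: \widetilde\Omega \to \bbR$. Using the definition of $\mu_n^*$, the change of variables formula for pushforwards, and Lemma \ref{lem:mutrans},
\begin{align*}
\int f \, d(\mu_n^* \circ \theta_z^{-1})
&= \frac{1}{n}\int_0^n \int (f\circ \theta_z) \, d\mu_\alpha \, d\alpha
= \frac{1}{n}\int_0^n \int f \, d\mu_{\alpha-\rho\cdot z} \, d\alpha \\
&= \frac{1}{n}\int_{-\rho\cdot z}^{n-\rho\cdot z} \int f \, d\mu_\beta \, d\beta,
\end{align*}
after substituting $\beta = \alpha - \rho \cdot z$. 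Comparing this with $\int f\, d\mu_n^* = \frac{1}{n}\int_0^n \int f \, d\mu_\beta \, d\beta$, the difference is a sum of integrals over two intervals of total length at most $2|\rho\cdot z|$. Since $|\int f \, d\mu_\beta| \leq \|f\|_\infty$ uniformly in $\beta$, this difference is bounded by $2|\rho\cdot z|\|f\|_\infty / n$, which tends to $0$.

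Now I would specialize to the subsequence $(n_k)$ of Lemma \ref{lem:subseq}. The map $\theta_z$ is continuous on $\widetilde\Omega$ with the product topology (it acts componentwise as a coordinate relabeling), so $f \circ \theta_z$ is bounded and continuous whenever $f$ is. Therefore
\[\int f \, d(\mu_{n_k}^* \circ \theta_z^{-1}) = \int (f\circ\theta_z) \, d\mu_{n_k}^* \xrightarrow{k\to\infty} \int (f\circ\theta_z) \, d\mu = \int f \, d(\mu\circ\theta_z^{-1}),\]
while the computation above also gives $\int f \, d(\mu_{n_k}^* \circ \theta_z^{-1}) - \int f \, d\mu_{n_k}^* \to 0$ and $\int f \, d\mu_{n_k}^* \to \int f \, d\mu$. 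Combining these three convergences, $\int f\, d(\mu\circ\theta_z^{-1}) = \int f\, d\mu$ for every bounded continuous $f$, and therefore $\mu\circ\theta_z^{-1} = \mu$ as Borel measures.

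The main (minor) subtlety is verifying continuity of $\theta_z$ and tightness/boundedness arguments justifying passage to the limit; these are routine once one fixes the product topology on $\widetilde\Omega = [0,\infty)^{\cE^d} \times \bbR^{\Z^d} \times \{0,1\}^{\vec\cE^d}$ under which all three factors are Polish. The essential mechanism is the classical one: averaging a covariant family over a large window kills the shift up to boundary error.
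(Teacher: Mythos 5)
Your proof is correct and follows essentially the same route as the paper: apply Lemma \ref{lem:mutrans} to shift the averaging window, bound the discrepancy by the two boundary intervals of total length $2|\rho\cdot z|$, and pass to the weak limit along $(n_k)$ — you simply spell out the final limiting step (continuity of $\theta_z$ and testing against bounded continuous functions) that the paper leaves as "approximating $\mu$ by $\mu_n^*$." You are also right that the $\mu^*(A)$ on the right-hand side of the statement is a typo for $\mu(A)$.
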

\begin{proof}[Proof sketch]
  Let $n$ be a positive integer. We can write (using Lemma \ref{lem:mutrans})
  \[ \mu^*_n \circ \theta^{-1}_z( A) = \frac{1}{n} \int_{-z \cdot \rho}^{n - z \cdot \rho} \mu_\alpha (A) d\alpha\ .\]
In particular,
\[\left|\mu_n^* \circ \theta^{-1}_z( A) - \mu_n^*(A) \right| \leq \frac{1}{n}\left| \int_{n - z \cdot \rho}^{n} \mu_\alpha (A)\, d\alpha + \int_{-z \cdot \rho}^{0} \mu_\alpha(A)\, d \alpha \right|\ , \]
which tends to zero in $n$ for any fixed $A$. The result now follows by approximating $\mu$ by $\mu^*_n$ for $n$ large.
\end{proof}

\subsection{Asymptotics for samples from $\mu^*$}
We will now study the asymptotics of a sample $B \in \Omega_2$ from the marginal of $\mu$. Our goal is to replicate the previous ``averaging'' arguments but in a strong sense. Our first step, as before, is to control the expectation.
\begin{theorem}\label{thm:buseasymp}
  For any $x, y \in \Z^d$, we have
  \[\bfE_\mu B(x,y) = \rho \cdot (y-x)\ . \]
\end{theorem}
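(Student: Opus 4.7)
The plan is to first compute $\bfE_{\mu_n^*} B(x,y)$ explicitly via the averaging definition of $\mu_n^*$, and then transfer the resulting identity through the weak limit $\mu_{n_k}^* \to \mu$ using uniform integrability. Because $\mu_\alpha$ is the pushforward of $\prob$ under $\Phi_\alpha$, the $B$-marginal of $\mu_\alpha$ realizes $B_\alpha(x,y) = T(x, H_\alpha) - T(y, H_\alpha)$, so
\[\bfE_{\mu_n^*} B(x,y) \;=\; \frac{1}{n}\int_0^n \bfE\bigl[T(x, H_\alpha) - T(y, H_\alpha)\bigr]\, d\alpha.\]
Translation invariance of $\prob$, combined with the identity $H_\alpha - x = H_{\alpha - x \cdot \rho}$, yields $\bfE T(x, H_\alpha) = f(\alpha - x \cdot \rho)$, where $f(\beta) := \bfE T(0, H_\beta)$. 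Setting $a = x \cdot \rho$ and $b = y \cdot \rho$ and, by skew-symmetry, assuming $a < b$, a change of variables gives
\[\int_0^n\!\bigl[f(\alpha - a) - f(\alpha - b)\bigr]\, d\alpha \;=\; \int_{n-b}^{n-a}\! f(\beta)\, d\beta \;-\; \int_{-b}^{-a}\! f(\beta)\, d\beta.\]

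The next step is to invoke a shape theorem for point-to-hyperplane times to evaluate the large-$n$ behavior. Since $\rho$ is dual to a supporting hyperplane of $\cB$ touching at $\zeta$, the dual norm satisfies $g^*(\rho) = \sup_{y \in \cB} y \cdot \rho = 1$, so the $g$-distance from $0$ to $H_\beta$ equals $\beta$. The standard pointwise shape theorem (upper bound via $T(0, [\beta\zeta]) \leq (1+\epsilon)\beta$, lower bound via uniform convergence on expanding balls) then yields $T(0, H_\beta)/\beta \to 1$ almost surely, and uniform integrability of $T(0, [\beta\zeta])/\beta$ (inherited from the moment bounds underlying Assumption \ref{asn:std}) upgrades this to $L^1$-convergence $f(\beta)/\beta \to 1$. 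The second integral above is $O(1)$ and vanishes after dividing by $n$, while
\[\frac{1}{n}\int_{n-b}^{n-a} f(\beta)\, d\beta \;\longrightarrow\; b - a \;=\; \rho \cdot (y-x).\]
Hence $\bfE_{\mu_n^*} B(x,y) \to \rho \cdot (y-x)$ along the whole sequence.

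Finally, I would transfer this through the weak limit. The map $(\omega, B, \eta) \mapsto B(x,y)$ is a continuous projection on $\widetilde\Omega$, so the distributions of $B(x,y)$ under $\mu_{n_k}^*$ converge weakly to that under $\mu$. By Lemma \ref{lem:busenew}(1), $|B(x,y)| \leq T(x,y)$ under every $\mu_\alpha$ and hence under every $\mu_n^*$; since the $\omega$-marginal of each $\mu_n^*$ is $\prob$ and $\bfE_\prob T(x,y) < \infty$,
\[\sup_n \bfE_{\mu_n^*}\bigl[|B(x,y)|\, \mathbf{1}_{|B(x,y)| > K}\bigr] \;\leq\; \bfE_\prob\bigl[T(x,y)\,\mathbf{1}_{T(x,y) > K}\bigr] \;\xrightarrow[K\to\infty]{}\; 0,\]
which is uniform integrability. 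Truncating $B$ at $\pm K$ reduces the weak-limit step to bounded continuous functions, and the truncation error is controlled uniformly in $k$ (and for $\mu$ itself) by the display above, giving $\bfE_{\mu_{n_k}^*} B(x,y) \to \bfE_\mu B(x,y)$. The main technical obstacle is the $L^1$ version of the point-to-hyperplane shape theorem $f(\beta)/\beta \to 1$; once that is in hand the remainder is elementary change of variables together with the standard weak-convergence-plus-UI argument.
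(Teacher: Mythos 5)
Your proposal is correct and follows essentially the same route as the paper's proof: compute $\bfE_{\mu_n^*}B$ by averaging over $\alpha$, use translation invariance and a change of variables, invoke the $L^1$ point-to-hyperplane limit $T(0,H_\beta)/\beta \to 1$ (the paper's Lemma \ref{lem:linetime}), and pass to the weak limit by truncation plus uniform integrability. The only cosmetic differences are that you treat general $x,y$ directly rather than reducing to $B(-x,0)$ by translation invariance, and you establish uniform integrability by dominating with $T(x,y)\mathbf{1}_{T(x,y)>K}$ under the fixed $\omega$-marginal $\prob$ instead of the paper's Cauchy--Schwarz/second-moment bound.
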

We need the following lemma:
\begin{lemma}\label{lem:linetime}
  Almost surely and in $L^1$,
  \[\lim_{\alpha \rightarrow \infty} T(0, H_\alpha)/\alpha = 1\ . \]
\end{lemma}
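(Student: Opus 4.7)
The plan is to prove the two asymptotic inequalities for $T(0,H_\alpha)/\alpha$ separately, and then upgrade the resulting almost-sure convergence to $L^1$ convergence via Scheff\'e's lemma. The two key inputs are (i) the uniform shape theorem in the form $T(0,z) = g(z) + o(|z|)$ as $|z|\to\infty$ (a.s.\ and in $L^1$ under Assumption~\ref{asn:std}), and (ii) the fact that, by the definition of $\rho$, the hyperplane $H = H_1$ supports $\cB$ at $\zeta$. Since $g$ is a norm, this translates into the inequality $g(y) \ge \rho\cdot y$ for every $y\in\bbR^d$, with equality at $y=\zeta$; in particular $g(\zeta) = \rho\cdot\zeta = 1$.

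For the upper bound, since $\alpha\zeta\in H_\alpha$, the definition of point-to-set passage time gives $T(0,H_\alpha)\le T(0,[\alpha\zeta])$. Applying the shape theorem to the vertex $[\alpha\zeta]$, whose Euclidean norm tends to infinity and whose direction tends to that of $\zeta$, one finds $T(0,[\alpha\zeta])/\alpha\to g(\zeta)=1$ almost surely, so $\limsup_\alpha T(0,H_\alpha)/\alpha\le 1$ a.s.

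For the lower bound, fix $\varepsilon>0$. The uniform shape theorem supplies an a.s.\ finite random $R=R_\varepsilon(\omega)$ with $T(0,z)\ge g(z)-\varepsilon|z|$ whenever $|z|\ge R$. Any lattice point $z=[y]$ with $y\in H_\alpha$ satisfies $g(z)\ge \rho\cdot z\ge \alpha - |\rho|\sqrt d$ and also $|z|\ge \alpha/|\rho|-O(1)$, so the condition $|z|\ge R$ is automatic for all large $\alpha$. The main obstacle is that such a $z$ could in principle have $|z|$ much larger than $\alpha$, in which case $\varepsilon|z|$ is not controlled by $\varepsilon\alpha$. I resolve this with a case split, using the deterministic norm bound $g(z)\ge c|z|$ (valid with some $c>0$ because $g$ is a norm): fix $K=2/c$. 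If $|z|\le K\alpha$ then $T(0,z)\ge \alpha - K\varepsilon\alpha - O(1)$; if $|z|>K\alpha$ then $T(0,z)\ge (c-\varepsilon)|z|\ge (c-\varepsilon)K\alpha\ge\alpha$ once $\varepsilon<c/2$. Taking the infimum over $z$ with representative in $H_\alpha$ and then sending $\varepsilon\to 0$ yields $\liminf_\alpha T(0,H_\alpha)/\alpha\ge 1$ a.s.

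For the $L^1$ statement, $T(0,H_\alpha)/\alpha$ is nonnegative and tends a.s.\ to the constant $1$. The inequality $T(0,H_\alpha)\le T(0,[\alpha\zeta])$ together with the $L^1$ shape theorem gives $\limsup_\alpha \bfE T(0,H_\alpha)/\alpha\le g(\zeta)=1$, while Fatou's lemma applied to the a.s.\ convergence supplies the matching $\liminf\ge 1$. Hence $\bfE T(0,H_\alpha)/\alpha\to 1=\bfE\lim_\alpha T(0,H_\alpha)/\alpha$, and Scheff\'e's lemma upgrades the almost-sure convergence to $L^1$ convergence, completing the proof.
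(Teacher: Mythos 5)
Your proof is correct, and it follows exactly the route the paper indicates (the paper omits the argument, saying only that the lemma "follows from the shape theorem in a fairly straightforward manner"): the supporting-hyperplane inequality $g(y)\ge \rho\cdot y$ with equality at $\zeta$, the uniform shape theorem for both bounds (with your case split handling far-away points of $H_\alpha$), and Fatou plus Scheff\'e for the $L^1$ statement are precisely the standard ingredients. No gaps; this is a faithful filling-in of the omitted details.
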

This lemma follows from the shape theorem in a fairly straightforward manner, so we omit the proof.
\begin{proof}[Proof of Theorem \ref{thm:buseasymp}]
  Let $n$ be a positive integer. Write
  \begin{align}
    \bfE_{\mu_n^*} B(-x, 0)
    &= \frac{1}{n} \left[\int_{0}^n \bfE T(-x, H_\alpha)\,d \alpha - \int_{0}^n \bfE T(0, H_\alpha)\, d \alpha \right]\nonumber\\
   &= \frac{1}{n} \left[\int_{0}^n \bfE T(0, H_{\alpha + x \cdot \rho})\,d \alpha - \int_{0}^n \bfE T(0, H_\alpha)\, d \alpha \right]\nonumber\\
  &= \frac{1}{n} \left[\int_{n}^{n + x \cdot \rho} \bfE T(0, H_{\alpha})\,d \alpha - \int_{0}^{x \cdot \rho} \bfE T(0, H_\alpha)\, d \alpha \right]. \label{eq:shiftalph}
  \end{align}
The second equality comes by translation invariance.

Taking limits in \eqref{eq:shiftalph}, the second term goes to zero with $n$. To deal with the first term, note that Lemma \ref{lem:linetime} implies that for each $\alpha \in [0, x \cdot \rho]$,
\[\lim_{n \rightarrow \infty}\frac{\bfE T(0, H_{\alpha + n})}{n} = \lim_{n \rightarrow \infty} \frac{\bfE T(0, H_{\alpha + n})}{n + \alpha} \frac{n + \alpha}{n} = 1\ . \]
Applying this in \eqref{eq:shiftalph} (with a dominated convergence theorem argument to take the limit under the integral) gives $\bfE_{\mu_n^*} B(-x, 0) \rightarrow \rho \cdot x$.

It remains only to show that $\bfE_{\mu_{n_k}^*} B(-x,0) \rightarrow \bfE_{\mu} B(-x,0)$ (since the expectation at arguments $x,y$ now follows by translation-invariance). For $R > 0$, defining the continuous truncation
\[B_R(-x, 0) = B(-x, 0)\mathbf{1}_{|B(-x,0)| \leq R} + R\, \mathrm{sign}(B(-x,0)) \mathbf{1}_{|B(-x,0) > R|}\ , \]
we have by continuity and boundedness that 
\[\bfE_{\mu_{n_k}^*}B_R(-x,0) \rightarrow \bfE_{\mu} B_R(-x,0)\ .\]

The claim now follows by taking $R \rightarrow \infty$. Indeed, we have that $B(-x,0)^2 \leq T(0,-x)^2$, which has a finite second moment (bounded uniformly in $n$ by some constant $C$). The Cauchy-Schwarz inequality implies
\begin{align*}
\bfE_{\mu_n^*} |B(-x,0)| \mathbf{1}_{|B(-x,0)| > R} &\leq  \left(\bfE_{\mu_n^*} |B(-x,0)|^2\right)^{1/2}  \mu_n^*(|B(-x,0)| > R)^{1/2}\\
&\leq C^{1/2} \prob(T(0,-x) > R)^{1/2}\ ,
\end{align*}
where we again use $|B(-x,0)| \leq T(-x,0)$. The probability above goes to zero uniformly in $n$ as $R \to \infty$, so we have established
\[\lim_{R \rightarrow \infty} \limsup_{n \rightarrow \infty}\bfE_{\mu_n^*} |B(-x,0)| \mathbf{1}_{|B(-x,0)| > R} = 0\ .\qedhere \]
\end{proof}

As we said earlier, we want to prove a 
\index{shape theorem}%
shape theorem for $B$, establishing the a.s. leading-order behavior of $B(0,x)$ under $\mu$. The preceding lemma suggests a target for this shape theorem: for $|x|$ large, one expects $|B(0,x) - \rho \cdot x| \ll |x|\ .$ Unfortunately, a result like this does not immediately follow in general without information about $\cB$, and it is here we use assumption (Dif).

A brief explanation of the problem is as follows. We want to use the ergodic theorem, as in the proof of Garet-Marchand's result, to establish that $B(0, nx)/n \rightarrow \rho \cdot x$ for each fixed $x$; if we could do this, we could patch together a global result by continuity. Unfortunately, the ergodic theorem does not give convergence of $B(0, nx)/n$ to a deterministic limit, because the measure $\mu$ is not guaranteed to be ergodic. So the best we can hope for {\it a priori} is convergence to some functional whose mean is $\rho$. 

The above obstacle is the main reason we assumed (Dif) in the first place. We will see that the differentiability gives us the ability to say the random limiting functional is in fact identical to $\rho$, almost surely.

\begin{theorem}\label{thm:dhshape}
  There exists a random vector $\varpi$ on $\widetilde \Omega$  such that
  \[\mu\left(\limsup_{|z| \rightarrow \infty} \frac{|B(0,z) - \varpi \cdot z|}{|z|} = 0  \right) = 1\ . \]
  Moreover, $\bfE_{\mu} \varpi = \rho$.
\end{theorem}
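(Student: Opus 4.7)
My plan is to proceed in three stages. First, construct a candidate random vector $\varpi$ using Birkhoff's ergodic theorem applied to each coordinate shift. Second, extend the pointwise limit $B(0,nx)/n \to \varpi \cdot x$ to every $x \in \Z^d$ --- this is the main technical hurdle, since $\mu$ need not be fully ergodic. Third, upgrade the lattice-pointwise statement to the uniform shape theorem via a standard $\varepsilon$-net argument. The mean identity $\bfE_\mu \varpi = \rho$ will follow immediately from the first stage combined with Theorem \ref{thm:buseasymp}.

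For the construction, recall that each coordinate shift $\theta_{e_i}$ preserves $\mu$ by Lemma \ref{lem:transinv}. Additivity (Lemma \ref{lem:busenew}(2)) and translation covariance of $B$ give $\mu$-a.s.
\[\frac{B(0,ne_i)}{n} = \frac{1}{n}\sum_{j=0}^{n-1} B(0,e_i)\circ\theta_{-je_i}.\]
Since $B(0,e_i) \in L^1(\mu)$ by Lemma \ref{lem:busenew}(1) combined with the $T$-integrability used in Theorem \ref{thm:buseasymp}, Birkhoff's theorem yields a $\mu$-a.s.\ and $L^1(\mu)$ limit $\varpi_i$. Theorem \ref{thm:buseasymp} then gives $\bfE_\mu \varpi_i = \bfE_\mu B(0,e_i) = \rho \cdot e_i$. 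Setting $\varpi := (\varpi_1,\ldots,\varpi_d)$ furnishes the required random vector with $\bfE_\mu \varpi = \rho$. The shape theorem applied to $T(0,ne_i)/n$ also yields the a.s.\ bound $|\varpi_i| \leq g(e_i)$, so $|\varpi|$ is uniformly bounded $\mu$-a.s.

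The hardest step is showing $B(0,nx)/n \to \varpi \cdot x$ a.s.\ for general $x = \sum_i c_i e_i \in \Z^d$. The obstacle is that each $\varpi_i$ is only $\theta_{e_i}$-invariant, so shifting the configuration in a non-$e_i$ direction may change it; this reflects the fact that $\mu$ is not known to be ergodic for the full $\Z^d$ action. My strategy is to telescope along a coordinate staircase $y_k(n) := n\sum_{j\leq k}c_j e_j$, giving
\[\frac{B(0,nx)}{n} = \sum_{k=1}^d \frac{B(0, nc_k e_k)\circ\theta_{-y_{k-1}(n)}}{n},\]
and to analyze each summand by combining the coordinate-direction ergodic limit with the $\theta_{e_k}$-invariance of $\varpi_k$. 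Only the non-$e_k$ components of $y_{k-1}(n)$ can possibly affect $\varpi_k$, and the telescope has only $d$ summands; a careful application of the $L^1$-ergodic and maximal ergodic theorems should handle the shifted Cesàro averages, producing $\mu$-a.s.\ convergence of the $k$-th summand to $c_k \varpi_k$.

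With lattice-pointwise convergence in hand, I replicate the standard shape theorem proof. Fix $\varepsilon > 0$, choose a finite set $\{u_1,\ldots,u_K\}\subseteq \Z^d \setminus \{0\}$ whose normalized directions $\varepsilon$-approximate $\bbS^{d-1}$, and intersect the (full-measure) convergence events. For $z \in \Z^d$ with $|z|$ large, select $k$ and $n$ so that $|nu_k - z| \leq \varepsilon |z|$, and estimate
\[|B(0,z) - \varpi \cdot z| \leq T(z,nu_k) + |B(0,nu_k) - n\varpi \cdot u_k| + |\varpi|\,|nu_k - z|.\]
The first term is $O(\varepsilon|z|)$ by $|B|\leq T$ and the classical shape theorem applied to $T(z,nu_k)$, the second is $\leq \varepsilon n = O(\varepsilon|z|)$ by lattice convergence, and the third is $O(\varepsilon |z|)$ since $|\varpi|$ is a.s.\ bounded. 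Letting $\varepsilon \to 0$ completes the proof.
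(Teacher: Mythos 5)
Your proposal follows essentially the same route as the paper's (itself only sketched) argument: Birkhoff's theorem along the coordinate shifts to define $\varpi$, a coordinate-staircase telescoping to pass from $B(0,ne_k)/n$ to $B(0,nx)/n \to \varpi \cdot x$ for general $x$, the standard shape-theorem patching argument for uniformity, and Theorem \ref{thm:buseasymp} for $\bfE_\mu \varpi = \rho$. One useful addition from the paper: $\varpi$ is in fact invariant under \emph{all} lattice shifts, not merely $\theta_{e_k}$-invariant in its own coordinate --- this follows from additivity, the bound $|B| \leq T$, and the fact that $T(ne_k, ne_k + z)/n \to 0$ a.s.\ for fixed $z$ (identically distributed integrable variables), and it is precisely what identifies the limit of your $k$-th shifted summand as $c_k \varpi_k$, so you should prove it rather than treat cross-direction dependence of $\varpi_k$ as an obstacle; the residual moving-base-point convergence that you defer to an $L^1$/maximal-ergodic argument is glossed at the same level of detail in the paper's own sketch (the full details are in \cite{DH14}).
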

We give a sketch of the main ideas of the proof, highlighting the issues with ergodicity.
\begin{proof}[Proof sketch]
  We first show that for $e = e_1, e_2, \ldots, e_d$, we have
  \begin{equation}
    \label{eq:axislim}
    \lim_{n \rightarrow \infty} B(0, n e)/n =: \varpi(e) \quad \text{exists $\mu$-a.s.}
  \end{equation}
To establish \eqref{eq:axislim}, we write $n^{-1}B(0, n e) = n^{-1}\sum_{j=1}^n B((j-1)e, je)$, which follows from (2) in Lemma \ref{lem:busenew} after some work to pass this property through the limit which produces $\mu$. We then rewrite this sum as a sum of copies of $B(0, e)$ evaluated in shifted environments as usual, and apply the ergodic theorem. Since $\mu$ is not ergodic but rather merely translation-invariant, we are only guaranteed that the limit exists and defines some random variable: it need not be constant.

We now define $\varpi = (\varpi(e_1), \ldots, \varpi(e_d))$; $\varpi$ is invariant under translations of the realization $((t_e), (B), (\eta))$. Our next step is to see that for each fixed $x = (x(1), x(2), \ldots, x(d)) \in \Z^d$, we have $B(0, nx)/n \rightarrow x \cdot \varpi$. This follows by writing 
\[B(0, nx) = B(0, n x(1) e_1) + B(n x(1) e_1, n x(1) e_1 + n x(2) e_2) + \ldots\]
and using the previous convergence result (and invariance of $\varpi$) to approximate each of these terms by $n x(1) \varpi(1)$, $n x(2) \varpi(2)$, etc.

This gives us convergence in fixed directions. To give the global convergence as in the statement of the theorem, we follow the proof of the shape theorem (just as we did in the proof of \eqref{eq:hoferg2}). The form of the mean of $\varpi$ is a consequence of Theorem \ref{thm:buseasymp}.
\end{proof}

As promised, we claimed that we can in fact show that $\varpi$ is $\rho$ under our assumptions. We conclude this subsection by giving the argument.
\begin{lemma}\label{lem:gottadif}
  $\mu$-a.s., the hyperplane $H_{\varpi}:= \{x: x \cdot \varpi = 1\}$ is a supporting hyperplane for $\partial \cB$ at $\zeta$. In particular, under (Dif), we have $\varpi = \rho$ almost surely.
\end{lemma}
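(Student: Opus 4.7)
The plan is to show $\mu$-almost surely both that $\varpi \cdot z \leq g(z)$ for every $z$ (so $H_\varpi$ does not cut into $\cB$) and that $\varpi \cdot \zeta = 1$ (so $H_\varpi$ meets $\partial \cB$ at $\zeta$). Together these say $H_\varpi$ is a supporting hyperplane at $\zeta$, and (Dif) then forces $H_\varpi = H$, hence $\varpi = \rho$.

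For the first claim, I would start from the pointwise bound $|B_\alpha(0,z)| \leq T(0,z)$ of Lemma \ref{lem:busenew}(1). This is a closed condition on the joint law of $(B_\alpha(0,z), T(0,z))$, so by the Portmanteau theorem it is preserved under averaging in $\alpha$ and under weak subsequential limits; thus $|B(0,z)| \leq T(0,z)$ $\mu$-a.s.\ for each $z \in \Z^d$. The $\omega$-marginal of $\mu$ is still $\prob$ by construction, so the shape theorem yields $T(0,nz)/n \to g(z)$ $\mu$-a.s., while Theorem \ref{thm:dhshape} gives $B(0,nz)/n \to \varpi \cdot z$. Letting $n \to \infty$ along each $z \in \Z^d$ and taking a countable intersection, I obtain $|\varpi \cdot z| \leq g(z)$ for every $z \in \Z^d$ on one $\mu$-full event; continuity of both sides in $z$ extends this to all $z \in \bbR^d$. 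Equivalently, $\cB \subseteq \{z : z \cdot \varpi \leq 1\}$, so $H_\varpi$ lies weakly outside $\cB$.

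For the second claim, I would specialize the previous inequality to $z = \zeta$: since $g(\zeta) = 1$ this gives $\varpi \cdot \zeta \leq 1$ $\mu$-a.s., while $\bfE_\mu[\varpi \cdot \zeta] = (\bfE_\mu \varpi) \cdot \zeta = \rho \cdot \zeta = 1$ by Theorem \ref{thm:dhshape} and the fact that $\zeta \in H$. A random variable a.s.\ bounded above by $1$ whose mean equals $1$ must equal $1$ a.s., so $\varpi \cdot \zeta = 1$ $\mu$-a.s. Combined with the first claim, this shows $H_\varpi$ supports $\cB$ at $\zeta$; under (Dif) the supporting hyperplane is unique, so $H_\varpi = H$, and since the two normals agree at $\zeta$ they coincide, yielding $\varpi = \rho$ a.s. The one subtle technical point is the preservation of the Busemann domination through the averaging/weak-limit procedure defining $\mu$; everything else is a clean convex-duality consequence of the first-moment identity $\bfE_\mu \varpi = \rho$.
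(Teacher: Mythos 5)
Your argument is correct and follows essentially the same route as the paper: bound $\varpi\cdot x=\lim_n B(0,nx)/n\leq \lim_n T(0,nx)/n=g(x)$ so that $\cB$ lies on one side of $H_\varpi$, then use $\bfE_\mu \varpi\cdot\zeta=\rho\cdot\zeta=1$ together with the a.s.\ bound $\varpi\cdot\zeta\leq 1$ to force $\varpi\cdot\zeta=1$ a.s., and conclude via uniqueness of the supporting hyperplane under (Dif). You simply make explicit (via the closed-set/Portmanteau argument) the passage of the domination $|B(0,z)|\leq T(0,z)$ through the averaging and weak limit, a point the paper leaves implicit.
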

\begin{proof}
  Note that almost surely, for any fixed $x \in \cB$
\[\varpi \cdot x = \lim_{n \rightarrow \infty}\frac{B(0, nx)}{n} \leq \lim_{n \rightarrow \infty} \frac{T(0, nx)}{n} =  g(x) \leq 1\ . \]
In particular, $\cB$ a.s. lies on one side of $H_{\varpi}$. On the other hand,
\[\bfE_{\mu} \varpi \cdot \zeta = \rho \cdot \zeta = 1\ . \]
Thus $\varpi \cdot \zeta = 1$ almost surely, and $H_\varpi$ is a supporting hyperplane.
\end{proof}

\subsection{Directedness}
We now prove Theorem \ref{thm:dh}. We show that for $\mu$-almost every element of $\widetilde \Omega$, there is an infinite path from $0$ in $\mathbb{G}(\eta)$ (which must be a geodesic, by the limiting version of Lemma \ref{lem:dirprops} (2)) and which has the required directedness.

\begin{proof}[Proof of Theorem \ref{thm:dh}]
It is not hard to show a limiting version of Property (1) of Lemma \ref{lem:dirprops} which says that with $\mu$-probability one, there is an infinite path in $\mathbb{G}$ from $0$. As mentioned just prior, this path is also easily seen to be a geodesic. So the main argument is to show
\begin{equation}
  \label{eq:lastdir}
  \text{a.s., for each infinite path $\gamma$ in $\mathbb{G}$, } \Theta(\gamma) \subseteq \Theta_S\ .
\end{equation}

Let $\gamma = (0 = x_0, x_1, \ldots)$, and let $(x_{n_k})$ be a subsequence such that $x_{n_k}/|x_{n_k}| \rightarrow \theta$. We show $\theta \in \Theta_S$. Applying Theorem \ref{thm:dhshape} (with $\varpi = \rho$) gives
\[ \lim_{k} B(0, x_k) / |x_k| = \rho \cdot \theta\ . \]
On the other hand, this limit also equals $\lim_k T(0, x_k) / |x_k| = g(\theta)$. In particular, $\theta / g(\theta)$ is on $\partial \cB$ and in the set $H$, so it is in $S$.
\end{proof}

\subsection{Sector-directed geodesics and uniqueness\label{sec:lnredux}}
We return in this section to the case of $\Z^2$. It is first worth discussing the result of Theorem \ref{thm:dh} and reflecting on the directedness conjectures of the model. Since it is widely believed that the boundary of the limit shape should be uniformly curved and everywhere differentiable (at least for typical edge weight distributions), one expects that there should exist directed geodesics in each direction. On the other hand, Theorem \ref{thm:dh} (as well as Theorem \ref{thm:newman}) would require strong and currently unknown information about $\cB$ in order to prove the existence of even a single geodesic having asymptotic direction $e_1$.

In fact, Theorem \ref{thm:dh} also applies to the setting where the joint distribution of the 
\index{edge weights}%
$t_e$'s is not i.i.d., but rather only assumed translation-ergodic. In this generality, one cannot expect all of the usual FPP conjectures to hold: for instance, we know any ``reasonable'' set is attainable as a limit shape (see Theorem 3.8 in \cite{ch:Damron}). 
In the case of an ergodic weight distribution having a polygonal $\cB$, Theorem \ref{thm:dh} guarantees only the existence of some $\cB$-dependent finite number of geodesics and does not rule out their wandering across entire sectors. In recent work by Brito and Hoffman \cite{BH17}, an ergodic weight distribution exhibiting this kind of counterintuitive behavior is constructed: in their example, there exist exactly four infinite geodesics, and each wanders across an entire quadrant of the lattice. See also the recent work by Alexander and Berger \cite{AB17}, where an ergodic distribution is constructed having an octogon as its limit shape, but having geodesics directed only toward the axis directions of the lattice.

With these ideas in mind, we return to the problem raised in Section \ref{sec:lnuniq}. Recall that Licea and Newman showed that for ``typical'' directions $\theta$ --- i.e., all $\theta$ in some set $D \subseteq [0, 2 \pi)$ --- there is a.s. a unique $\theta$-directed geodesic. We will describe work from \cite{DH17} which extends these uniqueness statements using Busemann function techniques. In keeping with the above discussion, these results give not uniqueness of geodesics having some single direction $\theta$, but rather uniqueness of geodesics directed within sectors (as produced by Theorem \ref{thm:dh}). The result also requires further assumptions on the limit shape; we comment on these after the statement of the theorem.
\begin{theorem}[\cite{DH17}]\label{thm:bigeo} Consider FPP on $\Z^2$ under the standard assumptions of Assumption \ref{asn:std}. Assume that $\partial \cB$ is differentiable everywhere. Let $\zeta \in \partial \cB$; let $H$ be the line tangent to $\cB$ at $\zeta$, let $S$ be the sector of of contact between $H$ and $\cB$, and let $\Theta_S$ be the corresponding set of angles (see \eqref{eq:difang} above). Then there is a.s. a unique geodesic $\gamma_x$ from each $x$ such that $\gamma_x$ is directed in $\Theta_S$ (i.e., $\Theta(\gamma_x) \subseteq \Theta_S$). Moreover, these geodesics a.s. 
\index{geodesic!coalescence}%
\index{geodesic!merging}%
coalesce: $|\gamma_x \triangle \gamma_y| < \infty$ for all $x,y$.
\end{theorem}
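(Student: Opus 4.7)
My plan is to combine the Busemann-function framework from the proof of Theorem~\ref{thm:dh} with a mass-transport argument that adapts the Licea-Newman proof of Theorem~\ref{thm:lnunique}. First I would construct the measure $\mu$ on $\widetilde\Omega$ and the associated Busemann function $B$ at the given boundary point $\zeta$ exactly as in Section~7. Since $\partial\cB$ is assumed differentiable everywhere, Lemma~\ref{lem:gottadif} forces $\varpi = \rho$ almost surely, so Theorem~\ref{thm:dhshape} upgrades to the deterministic shape theorem $B(0,z) = \rho \cdot z + o(|z|)$ as $|z|\to\infty$, with a single non-random vector $\rho$. The graphical construction of Definition~\ref{defin:graph} together with (the limiting version of) Lemma~\ref{lem:dirprops} then produces, $\mu$-almost surely, at least one infinite directed path in $\mathbb{G}(\eta)$ from each $x\in\Z^2$, and the argument that closed the proof of Theorem~\ref{thm:dh} shows that every such path is a $\Theta_S$-directed geodesic. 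This yields the existence half of the theorem.

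For uniqueness from a fixed starting point, suppose for contradiction that with positive probability two distinct $\Theta_S$-directed geodesics $\gamma_1, \gamma_2$ emanate from some $x$. After their bifurcation vertex they are disjoint (by Lemma~\ref{lem:unique} on $\Omega_u$), and by planarity of $\Z^2$ they bound an infinite region whose asymptotic directions lie in $\Theta_S$. Using translation-invariance of $\mu$ (Lemma~\ref{lem:transinv}) together with an ergodic-theorem/shift argument in the spirit of Theorem~\ref{thm:lnunique}, I would upgrade this to the existence, with positive probability, of three pairwise disjoint $\Theta_S$-directed geodesics emanating from three vertices on a common Busemann level set $H_\alpha$. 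Continuity of the edge weight distribution then permits the large-weight-barrier construction from the proof of Theorem~\ref{thm:lnunique} to isolate the middle geodesic from all other geodesics in $\Z^2$. I would then invoke the mass-transport rule $m(x,y) = 1$ exactly when $y$ is the $B$-minimal vertex such that $x$ lies on the $\Theta_S$-directed geodesic from $y$, and $m(x,y)=0$ otherwise. Each $x$ sends out at most unit mass, but the isolated middle spine receives mass from infinitely many vertices, forcing $\sum_y \bfE m(y,0) = \infty$ while $\sum_y \bfE m(0,y)\le 1$, a contradiction via Lemma~\ref{lem:mass}.

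Coalescence of $\gamma_x$ and $\gamma_y$ for distinct starting points is obtained by the same scheme: failure of coalescence at positive probability produces two forever-disjoint $\Theta_S$-directed geodesics in the plane, to which an analogous mass-transport rule applies. Finally, the $\mu$-almost-sure statements must be transferred back to $\prob$-almost-sure statements on the original probability space, which is done (as in the remark following the statement of Theorem~\ref{thm:dh}) by extracting an appropriate $(t_e)$-measurable event from $\widetilde\Omega$.

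The main obstacle I expect is executing the three-geodesic construction and barrier argument in the sector-directed setting. In the fixed-direction case of Theorem~\ref{thm:lnunique}, the lines $L_n$ perpendicular to $e_1$ provide clean transverse separators, and the event $E_n(x,y)$ is formulated in terms of last crossings of $L_n$; here the sector $\Theta_S$ may be wider than a single direction, so geodesics can wander within it and one loses the clean picture of eventually-monotone $e_1$-coordinate. The Busemann level sets $H_\alpha$ must play the role of $L_n$, and the deterministic Busemann shape theorem $B(0,z) \sim \rho\cdot z$ is exactly what guarantees that all $\Theta_S$-directed geodesics advance through these level sets at a common asymptotic rate, making the translate-and-combine step of the ergodic argument go through. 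Checking the measurability details of this whole construction and verifying that the barrier argument is robust to the sector-level (rather than direction-level) setup is where the bulk of the work lies.
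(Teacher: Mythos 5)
Your existence step is fine and matches the paper (it is exactly Theorem~\ref{thm:dh} applied at $\zeta$), but the uniqueness and coalescence steps have a genuine gap. You propose to run the Licea--Newman barrier-plus-mass-transport argument directly on $\Theta_S$-directed geodesics. That argument, however, takes as \emph{input} the almost sure uniqueness of directed geodesics from each fixed starting point (the event $F$ in the proof of Theorem~\ref{thm:lnunique}, supplied by Lemma~\ref{lem:lnlem}); this input is what guarantees that two directed geodesics which intersect must coalesce, which in turn is what makes the shielding construction of the isolated middle geodesic and the ``infinite incoming mass'' count work. Lemma~\ref{lem:lnlem} is available only for Lebesgue-a.e.\ direction, via the countability-of-$\Phi$/Fubini trick, and there is no analogous averaging over a \emph{fixed, deterministic} sector $\Theta_S$: uniqueness-from-a-point in $\Theta_S$ is essentially the statement you are trying to prove. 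This circularity also shows up in your transport rule itself, which refers to ``the $\Theta_S$-directed geodesic from $y$'' before uniqueness is known (and $\mathbb{G}(\eta)$ may contain several directed paths from $y$). The same objection applies to your coalescence step, which again invokes the Licea--Newman scheme for arbitrary $\Theta_S$-directed geodesics.

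The paper's proof gets around exactly this obstruction by a different mechanism, which your proposal does not contain. One first defines the \emph{topmost} and \emph{bottommost} $\Theta_S$-directed geodesics $\Gamma_T(x)$, $\Gamma_B(x)$ (using the trapping geodesics of Step 1 and a planar ordering); these are unique from each point \emph{by construction}, so the Licea--Newman/edge-modification machinery legitimately applies to prove coalescence of each extremal family (Step 3). One then introduces half-plane FPP on $\mathbb{H}$, whose crossing/ordering rigidity yields the monotonicity $\Delta(0,e_2)=B_T^H(0,e_2)-B_B^H(0,e_2)\ge 0$, while shape theorems for $B_T,B_B$ together with differentiability (so the tangent at $\zeta$ is vertical, $\rho\cdot e_2=0$) give $\bfE\,\Delta(0,e_2)=0$; hence $\Delta(0,e_2)=0$ a.s., and Lemma~\ref{lem:deltaunderdistinct} (a passage-time-uniqueness argument, not mass transport) shows this forces $\Gamma_T(0)=\Gamma_B(0)$ a.s., which gives both uniqueness and coalescence since every $\Theta_S$-directed geodesic is squeezed between the extremal ones. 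Without some substitute for this top/bottom comparison (or an independent proof of uniqueness-from-a-point for the fixed sector), your barrier and mass-transport steps cannot be carried out as described.
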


In fact, the theorem requires only a weaker assumption of differentiability at $\zeta$ and the endpoints of $S$. Under the additional assumption that every point of $\partial \cB$ is also an exposed point (so that $S = \{\zeta\}$ in the theorem), the statement gives us an existence and uniqueness statements of geodesics that are directed in the sense of Theorem \ref{thm:newman}. We describe here the outline of the proof of Theorem \ref{thm:bigeo} with many of the more graph-theoretic arguments truncated, preferring to emphasize the role of Busemann functions in the analysis.
\begin{proof}[Sketch of proof of Theorem \ref{thm:bigeo}]
One of the hardest parts of the argument is dealing with the undirectedness of the model. This will be dealt with by considering both FPP on $\Z^2$ and on the right half-plane $\mathbb{H} = \{(a,b) \in \Z^2: \, a \geq 0\}$; the edges of $\mathbb{H}$ are the edges of $\Z^2$ having both endpoints in $\mathbb{H}$. The models on the two graphs can be coupled in the natural way by using the same edge weights. The main reason for introducing $\mathbb{H}$ is that two geodesics cannot change their relative top / bottom ordering without crossing each other, and geodesic intersections yield a useful monotonicity property for the corresponding Busemann functions.

For simplicity, we consider just the sector in the $e_1$-direction (i.e., $\zeta$ is taken to be a multiple of $e_1$); as before, this allows us to simplify notation without sacrificing any important parts of the argument. Note that our assumption implies $\Theta_S \subseteq (-\pi/4, \pi/4)$. Let $G_H(x,y)$ denote the geodesic between $x,y \in \mathbb{H}$ for the FPP model on $\mathbb{H}$.\medskip

{\bf Step 1: Finding ``trapping'' geodesics.} We first establish the following two statements:
\begin{enumerate}
\item \label{item:one} There exist sequences $(\Theta_B^{n})_{n=1}^\infty$ and $(\Theta_T^{n})_{n=1}^\infty$ of sets of angles in the interval $(-\pi/2, \pi/2)$ such that $\sup \Theta_T^{n+1} < \inf \Theta_T^{n}$ and $\inf \Theta_B^{n+1} > \sup \Theta_B^n$ such that $(\inf \Theta_B^n)$ converges to the bottom (i.e. clockwise) endpoint of $\Theta_S$ and $(\sup \Theta_T^n)$ converges to the top (counterclockwise) endpoint of $\Theta_S$, such that the following holds. For each $n$, there a.s. exist geodesics in $\cT(0)$ directed in $\Theta_T^n$ and $\Theta_B^n$. 
\item The statement of Item 1 holds on $\mathbb{H}$, where the geodesics mentioned there are geodesics in $\mathbb{H}$ for the FPP model defined on $\mathbb{H}$.
\end{enumerate}
The first item above follows immediately from Theorem \ref{thm:dh}, using the fact that $\partial \cB$ is differentiable at the endpoints of $S$. 

To establish the result on the half-plane, we use the result on $\Z^2$. The geodesic in $\cT(0)$ from item \eqref{item:one} directed in $\Theta_T^n$ eventually crosses each vertical line; from this we derive that there is  a.s. a positive density of sites along the $e_2$-axis having $\mathbb{H}$-geodesics directed in $\Theta_T^n$. A similar statement holds for $\Theta_T^{n+1}$. Let $k e_2$ and $- \ell e_2$ ($k, \ell > 0$) be two vertices of the $e_2$-axis having such $\Theta_T^n$ and $\Theta_T^{n+1}$-directed (respectively) $\mathbb{H}$-geodesics. Denote these by $\Gamma^n$ and $\Gamma^{n+1}$; the directedness of these geodesics moreover allows us to choose them so that $\Gamma^n \cap \Gamma^{n+1} = \varnothing$.

Now choose some sequence $(v_n) \subseteq \mathbb{H}$ such that $v_n / |v_n| \to \theta$ for some $\theta \in (\sup \Theta_T^{n+1}, \inf \Theta_T^n)$. Let $\Gamma$ be some subsequential limit of the sequence $(G_H(0, v_n))$.  Uniqueness of passage times forces $\Gamma$ to be contained in the region containing $0$ and delimited by $\Gamma^n$, $\Gamma^{n+1}$, and the segment of the $e_2$-axis between $ - \ell e_2$ and $k e_2$. Thus $\Gamma$ is directed in $[\sup \Theta_T^{n+1}, \inf \Theta_T^n]$, and continuing in this way we can find an ordered sequence of sectors converging to the top endpoint of $\Theta_S$ and half-plane geodesics from $0$ directed in these sectors; a similar argument establishes the analogous statement for bottom endpoint sectors.\medskip

{\bf Step 2: Existence of extremal geodesics.} We define subtrees of $\cT(0)$ and its half-plane analogue consisting of $S$-directed geodesics, then find topmost and bottommost geodesics in them. More generally, for any $x$, let $\mathcal{G}(x)$ and $\mathcal{G}_H(x)$ be the sets of $\Theta_S$-directed $\Z^2$- and $\mathbb{H}$-geodesics starting at $x$, respectively. We define an ordering $\prec$ on each of these sets as follows: $\Gamma \prec \Gamma'$ if the topmost intersection of $\Gamma$ with $\{y: y \cdot e_1 = n\}$ is below the corresponding topmost intersection of $\Gamma'$, for all large $n$. This $\prec$ turns out to a.s. define a total ordering, due to the directedness of these geodesics (so they eventually pass each vertical line) and the uniqueness of passage times (which prevents crossing and then re-crossing).

Given this ordering, one shows that there a.s. exist a unique maximal element $\Gamma_T(x) \in \mathcal{G}(x)$ and minimal element $\Gamma_B(x) \in \mathcal{G}(x)$, with analogous statements for $\mathbb{H}$ (we denote the $\mathbb{H}$ geodesics by $\Gamma_T^H$ and $\Gamma_B^H$). To do this, one constructs $\Gamma_T(x)$ explicitly as the limit of $G(x, x_T(n))$, where $x_T$ is the element of $\mathcal{G}(x) \cap \{y: y \cdot e_1 = n\}$ having largest $e_2$-coordinate. One argues that any such subsequential limit must be $S$-directed (being trapped by the geodesics from Item 1 via uniqueness of passage times) but asymptotically lie above any other $S$-directed geodesic by virtue of the topmost position of $x_T(n)$ and the fact that geodesics in $\mathcal{G}(x)$ cannot backtrack too far (to interchange orderings). \medskip

{\bf Step 3: Coalescence of extremal geodesics.} One argues that the family $\{\Gamma_T(x)\}_x$ is a coalescing family of geodesics, with analogous statements for the $\Gamma_B(x)$'s and their half-plane analogues. The $\Z^2$ coalescence statements follow from a version of the Licea-Newman argument (see Section \ref{sec:lnuniq} above). The limiting procedure used to produce topmost geodesics shows that a subsegment of a topmost geodesic is also a topmost geodesic from its initial vertex. Since there is a unique $\Gamma_T$ from each $x$, if there were multiple distinct $\Gamma_T$ geodesics, one could use them and an edge-modification argument to construct geodesic structures which would be forbidden by translation-invariance. 

The coalescence result on $\mathbb{H}$ follows from the coalescence result on $\Z^2$; we describe the case that the geodesics start on the $e_2$-axis. Since the $\Z^2$ geodesics $\Gamma^T(x)$ pass every vertical line, we can find a positive density of sites along the $e_2$-axis whose topmost $\Z^2$ geodesics stay in $\mathbb{H}$. In such a case, it turns out that $\Gamma^T(x)$ is also the topmost $S$-directed geodesic from $x$ in $\mathbb{H}$. Now the fact that these geodesics coalesce also forces every other $\Gamma_T^H$ geodesic from axial vertices to coalesce with this family. The key observation forcing the trapping is again that subsegments of topmost geodesics are topmost geodesics, so the aforementioned topmost geodesics cannot cross.

Given these coalescing families, we can define Busemann functions $B_T$ and $B_B$ and their half-plane analogues $B_T^H$ and $B_B^H$. If $x \in \Z^2$ is any vertex and $\Gamma_T(x) = (x = x_1, x_2, \ldots)$, we set
\[B_T (y,z) = \lim_{n \to \infty} [T(y, x_n) - T(z, x_n) ]\ . \]
By coalescence, similarly to item (5) of Lemma \ref{lem:buseprops}, the definition above does not depend on the choice of $x$. We make similar definitions for the other Busemann functions. \medskip

{\bf Step 4: Limiting behavior of Busemann functions.}
One follows an argument similar to that used in the proof of Theorem \ref{thm:dhshape} to establish 
\index{shape theorem}%
shape theorems for $B_T$ and $B_B$. These say that there exists a (deterministic, since $B_T$ depends only on the i.i.d. environment) vector $\rho_T$ such that, for each $\delta > 0$,
\[ \prob\left(\limsup_{|x| \to \infty} \left|\frac{B_T(0,x) - \rho_T \cdot x}{|x|} \right| > \delta \right) = 0\ .  \]
Again a similar statement holds with $B_B$ and some $\rho_B$. Just as in the preceding Lemma \ref{lem:gottadif}, we see that $\{y: \rho_T \cdot y = 1\}$ must be the unique tangent line to the limit shape at $\zeta$ and similarly for $\rho_B$, giving $\rho_T = \rho_B$.

Since the tangent line at $\zeta$ is vertical, $\rho \cdot e_2 = 0$. Note that $B_T^H(0, k e_2) /k \to \bfE B_T^H(0, e_2)$ by the ergodic theorem. One can now show that $B_T^H(0, k e_2) = o(k)$ using the fact that $\Gamma_T(\ell e_2) = \Gamma_T^H(\ell e_2)$ for infinitely many $\ell$ and then applying the above shape theorem. An analogous statement holds for $B_B^H$, so defining $\Delta(x,y) = B_T^H(x,y) - B_B^H(x,y)$, we have $\bfE \Delta(0, e_2) = 0$.\medskip

{\bf Step 5: Deriving a contradiction.} We assume that $\Gamma_B(0) \neq \Gamma_T(0)$ with positive probability to derive a contradiction. We show that $\Delta(0,e_2)$ has a definite sign, which combined with the previous step gives $\Delta(0, e_2) = 0$ a.s. On the other hand, this is impossible if  $\Gamma_B(0)$ and $\Gamma_T(0)$ are not identical almost surely.

To see the ``definite sign'' claim, we note that the coalescence of top and bottom geodesics, along with the fact that geodesics from axis points in $\mathbb{H}$ must cross to change their top-bottom ordering, gives that $\Gamma_T^H(0)$ and $\Gamma_B^H(e_2)$ must share a vertex $z$. We will use the vertex $z$ to ``switch paths'' and bound the Busemann functions via subadditivity. Let $a_T$ and $a_B$ be vertices along $\Gamma_T^H(0)$ and $\Gamma_B^H(0)$, respectively, which come after both $z$ and the coalescence points with $\Gamma_T^H(e_2)$ and $\Gamma_B^H(e_2)$. We note that 
\begin{align}
B_T(0, e_2) = T(0, a_T) - T(e_2, a_T); \quad B_B(0, e_2) = T(0, a_B) - T(e_2, a_B)\ . \label{eq:breakpaths}
\end{align}

We now compute
\begin{align*}
  T(0, a_B) + T(e_2, a_T) &\leq T(0, z) + T(z, a_B) + T(e_2, z) + T(z, a_T)\\
  &= T(0, a_T) + T(e_2, a_B)\ .
\end{align*}
The inequality above follows from subadditivity of $T$; the equality comes from the geodesics $\Gamma_T^H(0)$ and $\Gamma_B^H(e_2)$ (so that, for instance, $T(0, z) + T(z, a_T) = T(0, a_T)$). Subtracting the terms involving $e_2$ from both sides of the above and using \eqref{eq:breakpaths} gives
\begin{equation}
  \label{eq:busemono}
  B_B(0, e_2) \leq B_T(0, e_2)\ , \quad \text{so} \quad \Delta(0, e_2) \geq 0 \quad \text{ a.s.}
\end{equation}
Combining \eqref{eq:busemono} with the fact that $\bfE \Delta(0, e_2) = 0$ gives that 
\begin{equation} \label{eq:deltazero}
\Delta(0, e_2) = 0\quad \text{a.s.}
\end{equation}

We contradict \eqref{eq:deltazero} with the following estimate:
\begin{lemma}
  \label{lem:deltaunderdistinct}
  If $\prob(\Gamma_B(0) \neq \Gamma_T(0)) > 0$, then $\prob(\Delta(0, e_2) > 0) > 0$.
\end{lemma}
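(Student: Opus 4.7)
The plan is to revisit the subadditive derivation of $\Delta(0,e_2) \geq 0$ and extract a strict inequality from the hypothesis. Recall that the bound came from
\begin{align*}
T(0, a_B) + T(e_2, a_T) &\leq T(0, z) + T(z, a_B) + T(e_2, z) + T(z, a_T)\\
&= T(0, a_T) + T(e_2, a_B),
\end{align*}
where the equality uses $z \in \Gamma_T^H(0) \cap \Gamma_B^H(e_2)$ and the inequality is a pair of subadditive bounds through $z$. By uniqueness of half-plane geodesics (Lemma \ref{lem:unique}), the first bound is strict unless $z$ lies on $G_H(0,a_B) = \Gamma_B^H(0)[0,\ldots,a_B]$, and the second is strict unless $z$ lies on $G_H(e_2,a_T) = \Gamma_T^H(e_2)[e_2,\ldots,a_T]$. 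Strictness in either bound yields $\Delta(0,e_2) > 0$, so my goal reduces to producing, on a positive-probability event, a vertex $z$ witnessing the original derivation and satisfying $z \notin \Gamma_B^H(0)$ (say).

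First I would transfer the hypothesis from $\Z^2$ to $\mathbb{H}$. By the trapping argument in Step 3 of the sketch above, a positive density of axial vertices $k e_2$ have $\Gamma_T(ke_2), \Gamma_B(ke_2) \subseteq \mathbb{H}$, and at such vertices these geodesics agree with $\Gamma_T^H(ke_2)$ and $\Gamma_B^H(ke_2)$. Combined with the hypothesis $\prob(\Gamma_T(0) \neq \Gamma_B(0)) > 0$, axial translation invariance, and the ergodic theorem applied to $e_2$-shifts, I would intersect the two positive-density events along the axis to obtain $\prob(\Gamma_T^H(0) \neq \Gamma_B^H(0)) > 0$.

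Next I would choose $z$ so as to force strict subadditivity. On the event $\{\Gamma_T^H(0) \neq \Gamma_B^H(0)\}$, let $w$ be the last common vertex of these two geodesics; past $w$, the two are vertex-disjoint. If the shared vertex $z \in \Gamma_T^H(0) \cap \Gamma_B^H(e_2)$ can be chosen past $w$ along $\Gamma_T^H(0)$, then $z \notin \Gamma_B^H(0)$ and we conclude $\Delta(0,e_2) > 0$. Existence of such a $z$ is a planar-topology statement: since $e_2$ lies above $0$ on the axis, $\Gamma_B^H(e_2)$ leaves $\mathbb{H}$'s boundary above $\Gamma_T^H(0)$ but eventually coalesces with the bottom trunk (lying below the top trunk to which $\Gamma_T^H(0)$ belongs, whenever these trunks differ), so a Jordan-curve argument like that in Section \ref{sec:lnuniq} forces crossings past any prescribed finite segment of $\Gamma_T^H(0)$.

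The main obstacle is handling the degenerate case in which the top and bottom trunks coincide but $\Gamma_T^H(0) \neq \Gamma_B^H(0)$ merely differ before merging. There, $\Gamma_B^H(e_2)$ need not cross $\Gamma_T^H(0)$ past $w$, so the planar step above does not directly apply. To handle this I would argue symmetrically using the other subadditive bound and the pair $\Gamma_T^H(e_2), \Gamma_B^H(e_2)$ (promoting $\prob(\Gamma_T^H(e_2) \neq \Gamma_B^H(e_2)) > 0$ by the same axial-density argument), and if both symmetric cases fail, resort to an edge-modification argument in the style of the proof of Theorem \ref{thm:lnunique}: perturb the weights of the few initial edges of $\Gamma_B^H(0)$ upward to push the bottom geodesic off of $w$, producing a positive-probability event on which the required strictness holds while monotonicity guarantees $\Gamma_T^H$ and $\Gamma_B^H$ retain their structural roles.
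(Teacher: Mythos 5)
Your strategy of extracting a strict inequality from the subadditive derivation at the fixed vertex $e_2$ has a genuine gap, and it occurs exactly at the planar step you describe as a ``Jordan-curve argument.'' On the event $\{\Gamma_T^H(0) \neq \Gamma_B^H(0)\}$, these two geodesics share an initial segment up to their bifurcation vertex $w$ and are disjoint afterwards (they cannot re-meet, by uniqueness of finite geodesics --- which also means the ``degenerate case'' you single out, where they differ and later re-merge into a common trunk, is vacuous; you have misdiagnosed where the difficulty sits). The real obstruction is that $\Gamma_B^H(e_2)$ may coalesce with $\Gamma_B^H(0)$ at a vertex of that common initial segment, i.e.\ \emph{before} $w$; from then on it follows $\Gamma_B^H(0)$, so every vertex of $\Gamma_T^H(0)\cap\Gamma_B^H(e_2)$ lies on $\Gamma_B^H(0)$ and the first bound is never strict. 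If in addition $\Gamma_T^H(e_2)$ also merges with $\Gamma_T^H(0)$ on that common segment, the second bound is never strict either, and in fact a short computation shows $\Delta(0,e_2)=0$ in this configuration even though $\Gamma_T^H(0)\neq\Gamma_B^H(0)$. So the hypothesis does \emph{not} pointwise force strictness at $e_2$; what must be shown is that the favorable geometry (the $e_2$-geodesics interacting with $\Gamma_T^H(0)$ beyond the bifurcation) occurs with positive probability, and that is precisely the nontrivial content your argument assumes. Your fallback does not repair this: raising the weights of initial edges of $\Gamma_B^H(0)$ modifies edges lying \emph{on} a geodesic, which is exactly the situation in which the Licea--Newman monotonicity (``raising weights off all relevant geodesics preserves them'') gives no control, so there is no guarantee the extremal geodesics ``retain their structural roles'' after the perturbation.

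The paper avoids this trap by arguing by contradiction and, crucially, by exploiting the freedom to replace $e_2$ with $k e_2$ for large $k$: if $\prob(\Delta(0,e_2)>0)=0$, then $\Delta(0,e_2)=0$ a.s.\ (by the monotonicity already proved), and additivity gives $\Delta(0,ke_2)=0$ a.s.\ for every $k$. One then fixes a box containing, with probability $\geq\delta$, an edge $f\in\Gamma_T^H(0)\,\triangle\,\Gamma_B^H(0)$, and uses directedness to choose $k$ so large that $\Gamma_T^H(ke_2)$ and $\Gamma_B^H(ke_2)$ avoid that box with probability $>1-\delta/2$. Writing $\Delta(0,ke_2)$ via coalescence points as a four-term identity in passage times, exactly one term involves $t_f$, contradicting the continuity of $F$ (Lemma \ref{lem:unique}). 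Note that the ``far away $k$'' is the mechanism that forces the disagreement edge to enter only one geodesic --- the very control your fixed-$e_2$ argument lacks --- and it sidesteps any crossing-vertex topology. If you want to salvage a direct strictness argument, you would similarly have to work with $ke_2$ for large $k$ (then transfer back to $e_2$ by additivity, nonnegativity of each increment $\Delta((j-1)e_2,je_2)$, and $e_2$-translation invariance), at which point you are essentially reconstructing the paper's proof.
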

Note that together, Lemma \ref{lem:deltaunderdistinct} and \eqref{eq:deltazero} show that $\prob(\Gamma_B(0) = \Gamma_T(0)) = 1$. Since every unigeodesic from $0$ directed in $\Theta_S$ asymptotically lies between $\Gamma_B(0)$ and $\Gamma_T(0)$ under the order $\prec$, we see that $\Gamma_B(0) = \Gamma_T(0)$ a.s. implies that there is a.s. at most one $\Theta_S$-directed geodesic from $0$.

\begin{proof}[Proof of Lemma \ref{lem:deltaunderdistinct}]
  Suppose for the sake of contradiction that (under the assumption of the lemma) $\prob(\Delta(0, e_2) = 0) = 1$. Since $B_T$ and $B_B$ are additive (compare to the second property of Lemma \ref{lem:buseprops}), we have
  \begin{equation}
    \label{eq:deltalarge}
    \prob(\Delta(0, k e_2) = 0) = 1 \quad \text{for all integers } k \geq 1\ .
  \end{equation}
We show that \eqref{eq:deltalarge} contradicts the a.s. uniqueness of passage times.

Choosing $b_1$ sufficiently large, we can find a $\delta >0$ and an edge $f \in [0, b_1] \times [-b_1, b_1]$ such that with probability at least $\delta$, $f \in \Gamma_T^H(0) \triangle \Gamma_B^H(0)$ (we assume that $f \in \Gamma_T^H(0)$, the other case being similar). By directedness of the half-plane geodesics in $\Theta_S$, we can choose a $k > b_1$ such that
\[\prob\left(\left(\Gamma_T^H(k e_2) \cup \Gamma_B^H(k e_2) \right) \cap \left([0, b_1] \times [-b_1, b_1]\right) = \varnothing \right) > 1- \delta / 2\ . \]
In particular, on the above event, $f$ is not in $\Gamma_T^H(k e_2)$ or $\Gamma_B^H(k e_2)$.

We have shown that with probability at least $\delta / 2,$ the edge $f$ is in $\Gamma_T^H(0)$ but not $\Gamma_B^H(0)$, $\Gamma_T^H(k e_2)$, or $\Gamma_B^H(k e_2)$, and furthermore $\Delta (0, k e_2) = 0$. Recalling the coalescence properties of these half-plane geodesics, let $a_T \in \Gamma_T^H(0) \cap \Gamma_T^H(k e_2)$ and similarly for $a_B$; we have
\begin{align*}
  \Delta(0, k e_2) &= B_T(0, k e_2) - B_B(0, k e_2)\\
  &= T(0, a_T) - T(k e_2, a_T) - T(0, a_B) + T(k e_2, a_B) = 0\ .
\end{align*}
Only one of the terms above involves $t_f$. This contradicts the almost sure uniqueness of passage times from Lemma \ref{lem:unique}.
\end{proof}
Theorem \ref{thm:bigeo} is proved.
\end{proof}

\bibliographystyle{amsplain}


\end{document}